\tikzset{
    >=stealth,
    every picture/.style={thick},
    graphs/every graph/.style={empty nodes},
}
\tikzstyle{vertex}=[
\tikzstyle{printersafe}=[decoration={snake,amplitude=0pt}]
\newcommand{\rank}{\operatorname{rank}}
\newcommand{\pp}{\mathbb{P}}
\renewcommand{\qq}{\mathbb{Q}}
\newcommand{\zz}{\mathbb{Z}}
\newcommand{\nn}{\mathbb{N}}
\newtheorem{theo}{Theorem}[section]
\newcounter{tmp}
\def\O#1.{\mathcal {O}_{#1}}			
\def\pr #1.{\mathbb P^{#1}}				
\def\af #1.{\mathbb A^{#1}}			
\def\ses#1.#2.#3.{0\to #1\to #2\to #3 \to 0}	
\def\xrar#1.{\xrightarrow{#1}}			
\def\K#1.{K_{#1}}						
\def\bA#1.{\mathbf{A}_{#1}}			
\def\bM#1.{\mathbf{M}_{#1}}				
\def\bL#1.{\mathbf{L}_{#1}}				
\def\bB#1.{\mathbf{B}_{#1}}				
\def\bK#1.{\mathbf{K}_{#1}}			
\def\subs#1.{_{#1}}					
\def\sups#1.{^{#1}}
\newtheorem{introdef}{Definition}
  \newtheorem{introthm}{Theorem}
  \newtheorem{theorem}{Theorem}[section]
  \newtheorem{lemma}[theorem]{Lemma}
  \newtheorem{proposition}[theorem]{Proposition}
  \newtheorem{corollary}[theorem]{Corollary}
  \newtheorem{definition}[theorem]{Definition}
\theoremstyle{remark}
\numberwithin{equation}{section}
\begin{document}

\title[Fano type surfaces with large cyclic automorphisms]{Fano type surfaces with large cyclic automorphisms}

\author[J.~Moraga]{Joaqu\'in Moraga}
\address{Department of Mathematics, Princeton University, Fine Hall, Washington Road, Princeton, NJ 08544-1000, USA
}
\email{jmoraga@princeton.edu}

\subjclass[2010]{Primary 14E30, 
Secondary 14M25.}
\maketitle

\begin{abstract}
We give a characterization of Fano type surfaces with large cyclic automorphisms.
\end{abstract}

\setcounter{tocdepth}{1} 
\tableofcontents

\section{Introduction}

Fano type varieties are one of the three building blocks of algebraic varieties.
The research of Fano type surfaces was initiated with the study of smooth Fano surfaces by 
del Pezzo in the late $19$th century.
The work of del Pezzo culminated with the famous classification of 
smooth Fano surfaces, nowadays known as {\em del Pezzo surfaces}.
Such surfaces belong to $9$ families distinguished by their degree.
The Borisov-Alexeev-Borisov conjecture predicted that this boundedness behavior holds
for mildly singular Fano type varieties.
This conjecture was proved in dimension two by Alexeev~\cite{Ale94}.
In~\cite{Bir04}, Birkar proved the theory of complements for surfaces.
He used this theory 
to give a second proof of the boundedness of weak log Fano surfaces~\cite{Bir04}.
In~\cite{Bir16a}, Birkar proved the boundedness of log canonical complements for Fano type varieties
of arbitrary dimension.
This machinery was used to give a positive answer to the BAB conjecture~\cite{Bir16b}. 
A better understanding of the geometry and complements
of Fano type surfaces often guides to important developments of 
higher-dimensional birational geometry.
In general, a novel understanding of Fano type surfaces
is usually the cornerstone for theorems about Fano type varieties
of arbitrary dimension.
We recall that a characterization of projective rational surfaces with a large group action of ${\rm Aut}(X)$ on ${\rm NS}(X)$ is given in~\cite{CD12}.

In this article, we study Fano type surfaces with large cyclic automorphisms.
Fano type surfaces with large cyclic automorphisms
appear naturally when studying Kawamata log terminal $3$-fold singularities
with large Cartier index.
The main aim of this paper is to understand how the existence
of large cyclic automorphisms on Fano type surfaces reflect in their geometry.
Moreover, we study which invariants of the Fano type surface $X$ will control this concept of {\em large}. 
For instance, it is easy to see that del Pezzo surfaces with 
large finite abelian automorphism groups of rank two must be toric, 
i.e., they are either $\pp^1\times\pp^1$ or the blow-up of $\pp^2$ at up to three points.
More generally, we expect the existence of a constant $N(\epsilon)$, only depending on $\epsilon$, satisfying the following.
Given an $\epsilon$-log canonical Fano surface $X$ 
with $\zz_m\leqslant {\rm Aut}(X)$ and $m\geq N(\epsilon)$,
then $X$ can be endowed with a $\mathbb{G}_m$-action.
In the two previous examples, 
we see how the existence of large cyclic automorphisms of $X$ reflects in the geometry of the variety as the existence of a torus action.
In these examples, the invariant of the Fano type surface $X$
that controls the concept of {\em large} for the automorphism group
is the minimal log discrepancy.
At the same time, it is known that if a Fano type surface has a
large finite abelian automorphism group of rank $k$, 
then $k$ is at most two.
This is a consequence of the Jordan property for the birational automorphism group of Fano type varieties (see, e.g.,~\cites{PS14,PS16,PS17}).
Hence, the above examples show the maximal rank behavior.

The main result of this article is a characterization of Fano type surfaces
with large cyclic automorphisms.
Surprisingly, the concept of {\em large} is effective 
and only depends on the dimension.
Thus, it is not necessary to bound any invariant of the Fano type surfaces as in the above examples.
Our first theorem gives a characterization of Fano type surfaces
with large cyclic automorphism groups.

\begin{introthm}\label{introthm-1}
There exists a positive integer $N$ satisfying the following.
Let $X$ be a Fano type surface so that $G:=\zz_m \leqslant {\rm Aut}(X)$ with $m\geq N$.
Then, there exists:
\begin{enumerate}
    \item A subgroup $A\leqslant G$ of index at most $N$,
    \item a boundary $B$ on $X$, and 
    \item a $A$-equivariant birational morphism $X\dashrightarrow X'$,
\end{enumerate}
satisfying the following conditions:
\begin{enumerate}
    \item The pair $(X,B)$ is log canonical, $G$-invariant, and $N(K_X+B)\sim 0$, 
    \item the push-forward of $K_X+B$ to $X'$ is a log pair $K_{X'}+B'$, 
    \item the pair $(X',B')$ admits a $\mathbb{G}_m$-action, and 
    \item there are groups monomorphisms
    $A<\mathbb{G}_m\leqslant {\rm  Aut}(X',B')$.
\end{enumerate}
\end{introthm}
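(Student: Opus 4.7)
The proof should proceed in three stages, combining the theory of bounded complements with a complexity-reduction argument and a final structural step about automorphism groups of log Calabi--Yau surface pairs.

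\textbf{Stage 1: Produce a $G$-invariant bounded complement.}
I would first apply the theory of complements for Fano type surfaces, due to Shokurov and made effective by Birkar, to fix a universal integer $N_0$ and a boundary $B_0$ on $X$ with $(X,B_0)$ log canonical and $N_0(K_X+B_0)\sim 0$. To make the complement $G$-invariant, I would average: since $G = \zz_m$ is finite, the boundary $B := \frac{1}{|G|}\sum_{g\in G} g^* B_0$ is $G$-invariant, still an $N_0$-complement (after slightly enlarging $N_0$ if needed to clear denominators), and $(X,B)$ remains log canonical. This reduces the whole problem to understanding the $G$-action on the log Calabi--Yau pair $(X,B)$, for which the boundary is now controlled.

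\textbf{Stage 2: Run a $G$-equivariant MMP to a toric-like model.}
The plan is to apply the Brown--McKernan--Svaldi--Zong theory of complexity for log Calabi--Yau pairs: if the complexity of $(X,B)$ is zero then $(X,B)$ is already a toric pair and admits a two-dimensional torus action. Otherwise, one runs a $G$-equivariant MMP on a carefully perturbed pair to reach a birational model $(X',B')$ on which complexity drops to zero. Equivariance is arranged by first passing to the subgroup $A\leq G$ that acts trivially on the Mori chamber structure and the set of extremal contractions under consideration. Since $\mathrm{NS}(X)$ has bounded rank and the action of $G$ on a bounded-rank lattice factors through a finite quotient of universally bounded order, the index $[G:A]$ is bounded by a constant $N$ depending only on the dimension. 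Each step of the MMP is then $A$-equivariant, and one arrives at the required birational map $X\dashrightarrow X'$ with $B' = f_* B$ and $(X',B')$ log Calabi--Yau of complexity zero.

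\textbf{Stage 3: Identify the $\mathbb{G}_m$-action and embed $A$ in it.}
On $(X',B')$, which is now toric, $\mathrm{Aut}(X',B')$ contains a torus $T\cong \mathbb{G}_m^k$. Using the Jordan-type bound for component groups of automorphism groups of log Calabi--Yau surfaces (or a direct argument: the finite part is bounded by the dual complex and combinatorics of $B'$), for $m$ large enough the cyclic subgroup $A$ must lie in the identity component $T$ of $\mathrm{Aut}(X',B')$ up to passing again to a further subgroup of bounded index. A cyclic subgroup of $T\cong\mathbb{G}_m^k$ of sufficiently large order generates a one-parameter subgroup, producing the chain $A<\mathbb{G}_m\leq \mathrm{Aut}(X',B')$ required by item~(4). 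Taking the worst constants from the three stages yields the single universal $N$.

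\textbf{Main obstacle.}
The most delicate step is Stage 2: one must execute an equivariant MMP that simultaneously (a) preserves an action of a bounded-index subgroup of $\zz_m$, (b) preserves the log Calabi--Yau structure of the complement, and (c) actually lands on a toric model rather than merely a pair of small complexity. The control of how extremal contractions interact with the cyclic action, and the verification that complexity can be forced to zero on surfaces (as opposed to only being bounded from above), is where all of the universal constants are produced and is the technical heart of the argument.
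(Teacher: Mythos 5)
Your Stage 1 matches the paper's first step, and the idea of running a $G$-equivariant MMP after producing a $G$-invariant bounded complement is also the right starting move. However, the core of Stage 2 contains a genuine gap: you aim to force the complexity of a crepant model $(X',B')$ to zero, which would make $(X',B')$ a \emph{toric} pair with a $\mathbb{G}_m^2$-action. This is strictly stronger than the conclusion of the theorem (which only asserts a $\mathbb{G}_m$-action), and it is false in general for a merely cyclic automorphism group. Concretely: after the $G$-equivariant MMP one typically lands on a Mori fiber space $Y'\to C\cong\mathbb{P}^1$, and when the cyclic group acts essentially \emph{fiber-wise} (large $G_f$, small $G_b$), the pair $(Y',B_{Y'})$ has a large fiber-wise $\mathbb{G}_m$-action but its complexity need not be zero and the pair need not be crepant toric. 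No further MMP, or perturbation of the boundary, will lower the complexity to zero in that situation. Your strategy would only settle the ``point'' case and the ``mixed'' case (both $G_f$ and $G_b$ large), which is precisely the rank-two situation of Theorem~\ref{introthm-2}; the cyclic Theorem~\ref{introthm-1} is harder exactly because of the fiber-wise case.

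The paper addresses this fiber-wise case with a different tool: after the equivariant MMP to a Mori fiber space over $\mathbb{P}^1$, it invokes Ambro's theorem on lc-trivial fibrations to deduce that the moduli part is birationally $\qq$-trivial, so the pair is, over an open set $U\subset C$, a product $V\times (F,B_F)$ after a finite Galois base change. This gives a fiber-wise $\mathbb{G}_m$-action over $U$. It then uses Shokurov's characterization of locally toric surface morphisms via the (relative) complexity (Theorem 6.4 of~\cite{Sho00}, and its $G$-equivariant upgrades in the paper) to show that, after a crepant $G$-equivariant modification, the fibration is \emph{everywhere} locally toric over $C$; this is what lets the fiber-wise $\mathbb{G}_m$-action extend to all fibers. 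That argument is the technical heart of the cyclic case and has no counterpart in your proposal. (There is also a separate del Pezzo case, $\rho^{G_0}=1$, handled by a classification argument.) Finally, your claim that $\mathrm{NS}(X)$ has universally bounded rank is not available at the level of generality of the theorem; the paper instead bounds indices using the Jordan property for birational automorphism groups of Fano type surfaces, together with the classification of del Pezzo surfaces with positive-dimensional automorphism groups and the exact sequence $1\to\mathbb{G}_m^k\to\mathrm{Aut}(\mathbb{G}_m^k)\to\mathrm{GL}_k(\zz)\to 1$.
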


For the convenience of the reader, we introduce some definitions to restate the first theorem less technically.

\begin{introdef}\label{def:log-crepant-torus-action}{\rm 
We say that a log canonical projective pair $(X,B)$ admits a {\em crepant torus action}
if it admits a crepant model $(X',B')$ endowed with a $\mathbb{G}_m$-action.
We require that $(X',B')$ is a log pair.
In particular, the birational morphism $X'\dashrightarrow X$ (resp. $X\dashrightarrow X'$)
only extract divisors with log discrepancies in the interval $[0,1]$ with respect to $(X,B)$ (resp. $(X',B')$).
We say that a projective variety $X$ admits a {\em log crepant torus action}
if $(X,B)$ admits a crepant torus action for some boundary $B$ on $X$.}
\end{introdef}

The following is a more natural way to state our first theorem.

\begingroup
\setcounter{tmp}{\value{theo}}
\setcounter{theo}{0}
\renewcommand\thetheo{\Alph{theo}}
\begin{theo}\label{introthm:A}
A Fano type surface $X$ with a large cyclic automorphism 
admits a log crepant torus action.
\end{theo}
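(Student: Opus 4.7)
My plan splits into three steps: produce an equivariant complement, reduce to a log Calabi--Yau situation of low complexity, and then degenerate the cyclic action to a one-parameter subgroup.

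\emph{Step 1: Equivariant complements.} The first move is to replace the bare surface $X$ by a log Calabi--Yau pair $(X,B)$ compatible with the $G$-action. Since $X$ is of Fano type and two-dimensional, Birkar's complement theory produces a universal integer $N_0$ and a boundary $B_0$ with $(X,B_0)$ log canonical and $N_0(K_X+B_0)\sim 0$. I would then average $B_0$ over $G$ to obtain a $G$-invariant $\qq$-boundary $B=\frac{1}{|G|}\sum_{g\in G} g_*B_0$, using that the index $N_0$ depends only on the dimension to keep $N(K_X+B)\sim 0$ for a bounded $N$. This gives conclusion (1) of Theorem~\ref{introthm-1}. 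The point of this step is that now $(X,B)$ is a $G$-invariant log Calabi--Yau surface, so $G$ acts on the Mori--Shokurov polytope, on the dual complex $\mathcal{D}(X,B)$, and on the set of log canonical centers.

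\emph{Step 2: Structure of log CY surfaces with large cyclic symmetry.} Next I would study what the bound $m\gg 0$ forces on $(X,B)$. Passing to a $G$-equivariant dlt modification and running a $G$-equivariant MMP on $K_X+B$ (which is trivial), the pair is crepant to one of the standard $2$-dimensional log CY birational models: a toric pair, a conic bundle over an elliptic curve, or a pair supported on an anticanonical elliptic curve. The key claim, and this is the main obstacle, is that a cyclic group $\zz_m\leq \mathrm{Aut}(X,B)$ with $m$ larger than an absolute constant $N$ rules out the elliptic cases: in the elliptic-curve base case the cyclic action on the base has bounded order because the only relevant elliptic curves would need to carry a $\zz_m$-action acting non-trivially on coordinates, which bounds $m$; similarly an anticanonical cycle/elliptic curve can support only a bounded cyclic symmetry compatibly with a $G$-invariant boundary of bounded complexity. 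Hence, up to replacing $G$ by a subgroup $A$ of bounded index $\leq N$, the dlt model of $(X,B)$ must be log crepant to a \emph{toric} pair, in the sense that its dual complex contains a $1$-dimensional component preserved by $A$.

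\emph{Step 3: From cyclic to $\mathbb{G}_m$.} On the toric(-like) model $(X',B')$ produced in Step~2, the complexity is at most one, so the pair admits an effective $\mathbb{G}_m$-action with $B'$ the reduced sum of the closures of the one-dimensional orbits plus an invariant divisor. The $A$-action sits inside $\mathrm{Aut}(X',B')$, which is a linear algebraic group whose identity component contains $\mathbb{G}_m$. The last thing to verify is the monomorphism $A\hookrightarrow \mathbb{G}_m$: since $A$ is cyclic of order at least $N/N_0$ and the component group $\mathrm{Aut}(X',B')/\mathrm{Aut}^0(X',B')$ is bounded for log CY surface pairs of bounded complexity, after a further index-bounded reduction we land inside a maximal torus, giving $A<\mathbb{G}_m\leq \mathrm{Aut}(X',B')$. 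Rephrasing this in the language of Definition~\ref{def:log-crepant-torus-action}, $(X,B)$ admits a crepant torus action and so $X$ admits a log crepant torus action.

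\emph{Main difficulty.} The subtle point is Step~2: one must show that ``many'' cyclic symmetries on a log CY surface force the underlying pair to be birational to a toric one, and one must do so with constants depending only on the dimension. This is where I expect a mix of the boundedness of $N$-complements, the BAB theorem in dimension two, and a case-by-case analysis of the possible dual complexes to be needed.
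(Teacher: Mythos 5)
Your overall architecture (equivariant complement, reduce to a nice model, promote the finite cyclic symmetry to $\mathbb{G}_m$) matches the paper, but there are two concrete gaps in your Steps~1 and~2 that would need repair before the argument closes.

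\textbf{Step 1: averaging does not give a bounded complement.} If $N_0(K_X+B_0)\sim 0$ and you set $B=\tfrac{1}{|G|}\sum_{g\in G}g_*B_0$, then what you can conclude is $N_0\,|G|\,(K_X+B)\sim 0$, not $N_0(K_X+B)\sim 0$. The divisors $g_*B_0$ are each $\qq$-linearly equivalent to $-K_X$, but they need not be linearly equivalent to one another, and the averaging introduces the factor $|G|$ in the Cartier index. So averaging yields only a $G$-invariant $\qq$-complement (this is Proposition~\ref{prop:q-compl} of the paper), not a $G$-invariant $M$-complement with $M$ controlled. The paper instead passes to the quotient $(Y,\Delta_Y)=(X,\Delta)/G$ via the Hurwitz-type Proposition~\ref{prop:hurwitz}: the quotient is still of Fano type (Proposition~\ref{prop:quotient-FT}), its boundary coefficients lie in a fixed hyperstandard DCC set, so Birkar's complement theorem applies downstairs with a constant $M$ independent of $|G|$, and pulling back gives the $G$-invariant $M$-complement (Theorem~\ref{thm:g-inv-compl}). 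You need that detour; the averaging alone cannot produce conclusion~(1) of Theorem~\ref{introthm-1}.

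\textbf{Step 2: the MMP must be run on $K$, and the structural input is Mori fiber spaces, not a dual-complex trichotomy.} Running a $G$-equivariant MMP on $K_X+B\sim_\qq 0$ is vacuous: the divisor is already nef and trivial, so nothing contracts. The paper instead passes to a $G$-equivariant resolution $Y\to X$ and runs a $G$-equivariant MMP for $K_Y$, which terminates at a $G$-equivariant Mori fiber space $Y'\to W$ (Step~1 of the proof of Theorem~\ref{thm:cyclic-automorphism}). That dichotomy $\dim W\in\{0,1\}$ is what drives the rest of the argument: if $W$ is a point, $Y'$ is a $G$-del Pezzo surface and Proposition~\ref{prop:cyclic-inv-del-pezzo} applies; if $W=\pp^1$, one analyzes the fiber-wise and base subgroups $G_f,G_b$ and splits into horizontal/vertical/mixed cases. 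Your appeal to a classification of log CY surfaces into ``toric, conic bundle over an elliptic curve, anticanonical elliptic curve'' and your claim that elliptic bases admit only bounded cyclic actions are too vague: elliptic curves carry cyclic groups of arbitrarily large order (translations), and ruling them out here requires the fact that the pair is $G$-invariant with boundary in a DCC set, the canonical bundle formula, Ambro's theorem (\cite{Amb05}) on the triviality of the moduli part, and Shokurov's locally toric characterization (Theorem~\ref{thm:locally-toric-surface}). These are the substantive ingredients replacing your Step~2 hand-waving. Your Step~3 is conceptually in the right direction, but the statement that $\mathrm{Aut}^0(X',B')$ contains $\mathbb{G}_m$ with bounded component group is precisely what needs to be established, and the paper does it constructively (as the torus of a toric model, or as a glued fiber-wise $\mathbb{G}_m$-action) rather than by citing a general boundedness principle for automorphism groups of log CY pairs.

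Both gaps are fixable, and they are fixed in the paper by the $G$-quotient complement construction and by the Mori fiber space analysis; but as written the proposal does not supply those arguments.
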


Here, {\em large} means that $|G|$ is larger than a universal constant (as in the statement of Theorem~\ref{introthm-1}).

Now, we turn to introduce a concept to measure the cardinality of generators of finite subgroups of bounded index of a given finite group.
Let $G$ be a finite group.
We define the {\em rank up to index $N$}, denoted by $r_N(G)$, to be the minimum rank among subgroups of index at most $N$.
We define the {\em $k$-generation} order to be the maximum $N$ such that $|G|\geq N$ and $r_N(G)\geq k$.
The $k$-generation of a finite group $G$ is denoted by $g_k(G)$.
Our second theorem gives a characterization of Fano type surfaces
with a finite group of automorphisms with large $2$-generation.
In particular, we obtain a characterization of Fano type surfaces with large abelian automorphism groups of rank two.

\begin{introthm}\label{introthm-2}
There exists a positive integer $N$ satisfying the following.
Let $X$ be a Fano type surface so that $G\leqslant {\rm Aut}(X)$ is a finite group with
$g_2(G)\geq N$.
Then, there exists:
\begin{enumerate}
\item An abelian normal subgroup $A\leqslant G$ of index at most $N$, 
\item a boundary $B$ on $X$, and 
\item a $A$-equivariant birational morphism $X\dashrightarrow X'$, 
\end{enumerate}
satisfying the following conditions:
\begin{enumerate}
    \item The pair $(X,B)$ is log canonical, $G$-invariant, and $K_X+B\sim 0$, 
    \item the push-forward of $K_X+B$ to $X'$ is a log pair $K_{X'}+B'$, 
    \item the pair $(X',B')$ is a log Calabi--Yau toric surface, and 
    \item there are group monomorphisms $A<\mathbb{G}_m^2\leqslant {\rm Aut}(X',B')$.
\end{enumerate}
In particular, $B'$ is the reduced toric boundary.
\end{introthm}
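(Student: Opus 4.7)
The plan is to apply Theorem~\ref{introthm-1} twice, iteratively, to extract two commuting $\mathbb{G}_m$-actions on a common crepant model, driven by a rank-two abelian subgroup of $G$. By the Jordan property for birational automorphism groups of Fano type surfaces, $G$ contains a normal abelian subgroup $A_0$ of index bounded by a universal constant $J$. Choosing $N>J$ and using $g_2(G)\geq N$, every subgroup of $G$ of index at most $N$ has rank at least two, so $A_0$ has rank $\geq 2$. Writing $A_0\simeq\zz_{n_1}\oplus\zz_{n_2}\oplus\cdots$ with $n_1\mid n_2\mid\cdots$, the subgroup $\zz_{n_2}\oplus\cdots$ is cyclic of index $n_1$ in $A_0$ (so of index $\leq n_1 J$ in $G$); the rank condition then forces $n_1 J > N$, and both $n_1,n_2\geq N/J$ are large with $N$.

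Apply Theorem~\ref{introthm-1} to $\zz_{n_2}\leq {\rm Aut}(X)$. Averaging the resulting complement over $G$ yields a $G$-invariant boundary $B$ making $(X,B)$ log canonical with $N_1(K_X+B)\sim 0$ for some universal $N_1$, together with a birational morphism $X\dashrightarrow X_1$ onto a crepant model $(X_1,B_1)$ admitting a $\mathbb{G}_m$-action containing a large cyclic subgroup of $\zz_{n_2}$. Running an $A_0$-equivariant MMP on the $A_0$-invariant pair $(X,B)$ (possible since $A_0$ is abelian and hence commutes with $\zz_{n_2}$) makes this birational map $A_0$-equivariant. The factor $\zz_{n_1}\leq A_0$ therefore acts on $(X_1,B_1)$ and commutes with a large cyclic subgroup of the first $\mathbb{G}_m$; since ${\rm Aut}(\mathbb{G}_m)=\zz/2\zz$ cannot nontrivially fix a cyclic subgroup of large order, $\zz_{n_1}$ commutes with the full first $\mathbb{G}_m$ for $n_2\gg 0$.

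Now apply Theorem~\ref{introthm-1} a second time to $\zz_{n_1}$ acting on $(X_1,B_1)$. Performing this construction $\mathbb{G}_m$-equivariantly (possible because the first $\mathbb{G}_m$ commutes with $\zz_{n_1}$ and can be arranged to preserve every intermediate boundary and MMP step) produces a further crepant model $(X_2,B_2)$ on which the first $\mathbb{G}_m$ still acts, together with a second $\mathbb{G}_m$ containing a large cyclic subgroup of $\zz_{n_1}$. The two tori commute and are independent, the latter because $\zz_{n_1}$ and $\zz_{n_2}$ are independent summands of $A_0$, so $\mathbb{G}_m^2\leq {\rm Aut}(X_2,B_2)$ acts faithfully on the surface $X_2$. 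Hence $(X_2,B_2)$ is toric, and since $B_2$ is torus-invariant, log canonical, with $K_{X_2}+B_2\sim_{\qq} 0$, it must equal the reduced toric boundary; this upgrades the $\qq$-linear equivalence to $K_{X_2}+B_2\sim 0$, and crepant pushforward yields $K_X+B\sim 0$. The required abelian $A\leq G$ of index at most $N$ is obtained as the intersection of $A_0$ with the kernels of the bounded-index quotients from both applications of Theorem~\ref{introthm-1}; by construction it is normal in $G$ and embeds into $\mathbb{G}_m^2\leq {\rm Aut}(X_2,B_2)$.

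The main obstacle is the equivariance in the second step: the construction of Theorem~\ref{introthm-1} must be carried out so that the first $\mathbb{G}_m$ survives the passage from $X_1$ to $X_2$. This requires running the underlying $\zz_{n_1}$-equivariant MMP and choosing the bounded complement for $\zz_{n_1}$ in a manner compatible with the existing $\mathbb{G}_m$-action on $(X_1,B_1)$, which should follow from the $\mathbb{G}_m$-invariance of the log pair and the rigidity of extremal contractions. A subsidiary point is upgrading the $\qq$-linear equivalence to a genuine linear equivalence once the toric structure is established, but this is immediate from $K_{X'}+\Sigma\sim 0$ on any projective toric surface with reduced toric boundary $\Sigma$.
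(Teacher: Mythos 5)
Your proposed route is genuinely different from the paper's. The paper proves Theorem~\ref{introthm-2} \emph{first} (as Theorem~\ref{thm:rank-2-automorphism}) by a direct argument: take a $G$-invariant complement via Theorem~\ref{thm:g-inv-compl}, pass to an abelian normal subgroup $G_0$ by Jordan, run a $G_0$-equivariant MMP to a $G_0$-Mori fiber space, and then in each of the resulting cases (Mori fiber space over a point or over a curve) use the del Pezzo analysis (Proposition~\ref{prop:g-inv-del-pezzo}), the fiber-wise/base analysis (Propositions~\ref{prop:large-fiber} and~\ref{prop:large-base}), and the $G$-invariant complexity bound (Theorem~\ref{thm:g-inv-comp}) to conclude directly that some crepant model is a toric log CY pair. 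Theorem~\ref{introthm-1} is proved \emph{afterward} and in fact cites the proof of Theorem~\ref{thm:rank-2-automorphism} for its mixed case (Step 6). Your plan reverses this dependency: you treat Theorem~\ref{introthm-1} as a black box and apply it twice. That is a legitimate strategy if Theorem~\ref{introthm-1} has an independent proof, but you should be aware that within this paper the logical flow is the other way around.

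The real gap is in the second application. Theorem~\ref{introthm-1}, as stated, takes as input a cyclic group acting on a Fano type surface and produces a crepant model with a single $\mathbb{G}_m$-action; it says nothing about performing the construction equivariantly with respect to an \emph{already present} $\mathbb{G}_m$-action. You need the entire chain of steps in that proof — the $G$-invariant complement, the equivariant resolution, the MMP to a Mori fiber space, the dlt modifications, the crepant modifications over the base, and the canonical-bundle-formula argument in the vertical case — to be carried out simultaneously $\mathbb{G}_m$-equivariantly and $\zz_{n_1}$-equivariantly, and you need the Mori fiber space structure to remain well-behaved when the relevant Picard rank is computed with respect to the \emph{larger} group $\mathbb{G}_m\cdot\zz_{n_1}$ rather than $\zz_{n_1}$ alone. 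These are not formal consequences of the statement of Theorem~\ref{introthm-1}; they amount to reproving it over a reductive (not merely finite) group, which is a nontrivial extension. Saying this ``should follow from the $\mathbb{G}_m$-invariance of the log pair and the rigidity of extremal contractions'' names the obstacle but does not close it. Likewise, your first step asserts that the birational map $X\dashrightarrow X_1$ produced for $\zz_{n_2}$ can be made $A_0$-equivariant so that $\zz_{n_1}$ survives onto $X_1$; this again requires reopening the proof of Theorem~\ref{introthm-1} and rerunning it with the larger abelian group, not just invoking the stated conclusion. A subsidiary point: your normality claim for the final $A\leqslant G$ needs care, since the bounded-index subgroups supplied by Theorem~\ref{introthm-1} are not stated to be normal or characteristic; the paper resolves this via Lemma~\ref{lem:subgrp-glnz} by producing a characteristic subgroup $G^l$ once the toric structure is in hand, and your argument needs an analogous step. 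Your observation that an effective $\qq$-trivial divisor is zero, so that a torus-invariant log canonical boundary with $K+B\sim_\qq 0$ must be the full reduced toric boundary, is correct and nicely handles the upgrade to $K_X+B\sim 0$.
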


Analogously to the first theorem, we introduce some definitions to state our second theorem less technically.

\begin{introdef}{\rm 
We say that a log canonical projective pair $(X,B)$ is {\em crepant toric}
if $(X,B)$ is crepant to a projective toric pair.
We say that a projective variety $X$ is {\em log crepant toric }
if $(X,B)$ is crepant toric for some boundary $B$ on $X$.}
\end{introdef}

The following is a more natural way to state our second theorem.

\begin{theo}\label{introthm:B}
A Fano type surface with a large finite abelian group of automorphisms of rank two is crepant log toric.
\end{theo}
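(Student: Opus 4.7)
The plan is to deduce Theorem~\ref{introthm:B} from Theorem~\ref{introthm:A} by extracting two commuting large cyclic subgroups inside the given abelian automorphism group and then producing two commuting $\mathbb{G}_m$-actions on a common crepant model. The first step is purely group-theoretic: the hypothesis $g_2(G)\geq N$, combined with the structure theorem for finite abelian groups, yields a normal subgroup $A\leqslant G$ of index at most $N$ isomorphic to $\zz_{m_1}\times \zz_{m_2}$ with $\min(m_1,m_2)\to \infty$ as $N\to \infty$. Indeed, if either invariant factor of $G$ were bounded, then quotienting it out would produce a cyclic subgroup of bounded index, contradicting $r_N(G)\geq 2$.

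Next I would apply Theorem~\ref{introthm:A} twice. The first application, to the cyclic subgroup $\zz_{m_1}\leqslant {\rm Aut}(X)$, produces a boundary $B$ on $X$, a $\zz_{m_1}$-equivariant birational contraction $X\dashrightarrow X_1$, and a log pair $(X_1,B_1)$ with $N(K_{X_1}+B_1)\sim 0$ carrying a $\mathbb{G}_m$-action whose image contains $\zz_{m_1}$. Since $\zz_{m_2}$ commutes with $\zz_{m_1}$ and $X$ is of Fano type, one can replace $B$ by its $A$-average without losing log canonicity or the linear equivalence $N(K_X+B)\sim 0$, and run the underlying MMP $A$-equivariantly, so that the whole construction is $A$-equivariant and $\zz_{m_2}$ acts on $(X_1,B_1)$ commuting with the $\mathbb{G}_m$ just produced. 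The second application, this time to $\zz_{m_2}$ acting on $(X_1,B_1)$, yields a crepant model $(X',B')$ endowed with a $\mathbb{G}_m$-action extending the $\zz_{m_2}$-action; the first $\mathbb{G}_m$-action transfers to $(X',B')$ because both sides are linked by a sequence of crepant birational maps that can be chosen to preserve the torus action.

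On the resulting log Calabi--Yau pair $(X',B')$, two commuting $\mathbb{G}_m$-subgroups act faithfully with different one-parameter structures, so the connected group they generate inside ${\rm Aut}^0(X',B')$ is a two-dimensional torus $\mathbb{G}_m^2$. A projective log Calabi--Yau surface pair admitting a faithful $\mathbb{G}_m^2$-action is a toric log Calabi--Yau surface whose boundary is the reduced toric divisor, so $(X',B')$ is the crepant toric model of $(X,B)$ and $X$ is log crepant toric. The main obstacle is the compatibility step in the two applications of Theorem~\ref{introthm:A}: one must ensure that the crepant log Calabi--Yau structure extracted in the first application can serve as input for the second, and that the crepant modification produced in Theorem~\ref{introthm:A} is canonical enough to transport the action of the commuting factor. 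Concretely this means running the relevant MMPs $A$-equivariantly and, possibly after a further crepant blow-up, identifying the boundaries produced by the two applications. Once two commuting torus actions sit on a single log pair, the toric conclusion follows from the classification of connected two-dimensional algebraic subgroups of the automorphism group of a projective surface.
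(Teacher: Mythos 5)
Your approach---applying Theorem~\ref{introthm:A} twice, once to each large cyclic factor, and then generating a $\mathbb{G}_m^2$ from the two resulting torus actions---is a genuinely different route from the one the paper takes, and it has a real gap at the step you yourself flag as the ``main obstacle.'' The paper does not deduce Theorem~\ref{introthm:B} from Theorem~\ref{introthm:A}; instead it runs a $G_0$-equivariant minimal model program for an abelian normal subgroup $G_0$ of bounded index (obtained from the Jordan property), and in the resulting Mori fiber space it uses the exact sequence $1\to G_f\to G_0\to G_b\to 1$ together with Lemma~\ref{lem:cyclic-2-gen} to conclude that \emph{both} $G_f$ and $G_b$ are large. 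This forces, via Propositions~\ref{prop:large-fiber} and~\ref{prop:large-base} and connectedness of log canonical centers, that the reduced part of the boundary on the crepant model has enough $G_0$-prime components for the complexity bound (Theorem~\ref{thm:g-inv-comp}) to give toricity directly. No second $\mathbb{G}_m$ is ever conjured up and then shown to commute with a first one.

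The gap in your version is the commutativity claim. You write that ``two commuting $\mathbb{G}_m$-subgroups act faithfully with different one-parameter structures, so the connected group they generate\dots is $\mathbb{G}_m^2$,'' but nothing you have said establishes that the two $\mathbb{G}_m$'s commute. The hypothesis only gives you that the finite cyclic subgroups $\zz_{m_1}$ and $\zz_{m_2}$ commute with each other. A finite cyclic subgroup does not determine the one-parameter torus containing it, and an element commuting with $\zz_{m_1}$ need not commute with---or even normalize---a chosen $\mathbb{G}_m\supset\zz_{m_1}$: already in $\mathrm{PGL}_2$ two distinct maximal tori sharing a common finite cyclic subgroup do not commute and together generate the whole group. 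So the step where you pass from ``$\zz_{m_2}$ commutes with $\zz_{m_1}\subset\mathbb{G}_m$'' to ``$\zz_{m_2}$ acts on $(X_1,B_1)$ commuting with the $\mathbb{G}_m$ just produced'' is not justified. Related, but secondary, is the model-matching issue: even after an $A$-equivariant set-up, the crepant model produced by the second run of Theorem~\ref{introthm:A} need not carry the first $\mathbb{G}_m$-action, since the second MMP is only guaranteed to be $A$-equivariant, not $\mathbb{G}_m$-equivariant. To repair your argument you would essentially have to rebuild the $G_0$-equivariant MMP analysis and prove toricity from the structure of the fibration directly---which is what the paper's proof of Theorem~\ref{thm:rank-2-automorphism} does.
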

\endgroup

Here, {\em large} means that $g_2(G)$ is larger than a universal constant (as in the statement of Theorem~\ref{introthm-2}).
The main ingredients for the proofs of the above theorems
are the Jordan property for finite birational automorphism groups of Fano type varieties, the theory of $G$-invariant complements, 
the characterization of toric varieties using complexity, 
the $G$-invariant minimal model program for surfaces,
and a characterization of locally toric surface morphisms due to Shokurov~\cite{Sho00}*{Theorem 6.4}.
The above theorems can be stated in a more general setting
for log pairs $(X,\Delta)$ on a Fano type surface $X$
so that $(X,\Delta)$ is $\qq$-complemented
(see Theorem~\ref{thm:cyclic-automorphism} and Theorem~\ref{thm:rank-2-automorphism}).
Now, we turn to give two applications of the above theorems.

Our first application is related to the fundamental group of the smooth locus of a Fano type surface.
It is known that such group is finite~\cites{FKL93,GZ94,GZ95,KM99}. Moreover, it can encode much of the geometry of the surface~\cites{Xu09}. We prove that if such group is large enough, then $X$ is log crepant to a finite quotient of a toric variety.
We make this concept precise in the following definition.

\begin{introdef}{\em 
We say that a log canonical projective pair $(X,B)$ is a {\em crepant toric quotient} if $(X,B)$ is crepant to a projective pair which is the quotient of a log Calabi--Yau toric pair by a finite automorphism group of the pair.
We say that $X$ is a {\em log crepant toric quotient} if $(X,B)$ is a crepant toric quotient for some boundary $B$ on $X$.
}
\end{introdef}

\begin{introthm}\label{introthm:fundamental}
There exists a positive integer $N$ satisfying the following.
Let $X$ be a klt surface with $-K_X$ ample.
If $G\leqslant \pi_1(X^{\rm sm})$ satisfies $g_2(G)\geq N$, then $X$ is a log crepant toric quotient.
\end{introthm}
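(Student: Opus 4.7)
\emph{Proof proposal.} The plan is to lift the problem to a suitable quasi-étale cover of $X$ where Theorem~\ref{introthm-2} applies, and then descend the toric structure to $X$. Let $N$ denote the universal constant of Theorem~\ref{introthm-2}.

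Since $X$ is a klt del Pezzo, $\pi_1(X^{\rm sm})$ is finite by~\cites{FKL93,GZ94,GZ95,KM99}, so $G$ is finite. I first observe that $g_2$ is monotone under inclusion of finite groups: if $G\leqslant H$ and $g_2(G)\geq N$, then $g_2(H)\geq N$. Indeed, for any $K\leqslant H$ with $[H:K]\leq N$, the subgroup $K\cap G\leqslant G$ has index at most $N$, hence rank at least two; so $K\cap G$ is non-cyclic, and therefore so is $K$. Replacing $G$ by $\pi_1(X^{\rm sm})$, I may assume $G=\pi_1(X^{\rm sm})$. Let $\pi\colon Y\to X$ be the quasi-étale Galois cover with Galois group $G$ obtained by extending the universal cover of $X^{\rm sm}$ via purity of the branch locus. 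Quasi-étale covers preserve klt singularities, and $K_Y=\pi^*K_X$ is anti-ample, so $Y$ is itself a klt del Pezzo surface with $G\leqslant{\rm Aut}(Y)$ and $g_2(G)\geq N$.

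Apply Theorem~\ref{introthm-2} to $(Y,G)$ to obtain an abelian normal subgroup $A\trianglelefteq G$ of index $\leq N$, a $G$-invariant boundary $B_Y$ on $Y$ with $(Y,B_Y)$ log canonical and $K_Y+B_Y\sim 0$, and an $A$-equivariant crepant birational map $(Y,B_Y)\dashrightarrow(Y',B'_Y)$ to a log Calabi--Yau toric pair with $A\hookrightarrow\mathbb{G}_m^2\leqslant{\rm Aut}(Y',B'_Y)$. Since $A$ acts on $Y'$ as a finite subgroup of the torus, the quotient pair $(Y'/A,B'_Y/A)$ is again log Calabi--Yau toric (its torus being $\mathbb{G}_m^2/A$), and the $A$-equivariance of the crepant map descends to a crepant birational map $(Y/A,B_Y/A)\dashrightarrow(Y'/A,B'_Y/A)$. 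The residual action of $G/A$ on the source is then transported through this map to a birational $G/A$-action on the target. Running the $G/A$-equivariant MMP for log Calabi--Yau surfaces yields a projective crepant model $(T,B_T)$ of $(Y'/A,B'_Y/A)$ on which $G/A$ acts regularly as pair automorphisms; since complexity is invariant under crepant birational equivalence and $(Y'/A,B'_Y/A)$ has complexity zero, Shokurov's complexity characterization of toric varieties forces $(T,B_T)$ itself to be log Calabi--Yau toric. Therefore
\[
(X,B_X)\;=\;(Y,B_Y)/G\;\sim_{\rm crep}\;(T,B_T)/(G/A),
\]
exhibiting $X$ as a log crepant toric quotient.

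The principal technical obstacle is the regularization step: producing a log Calabi--Yau toric representative in the crepant class of $(Y'/A,B'_Y/A)$ on which $G/A$ acts regularly. This combines the equivariant surface MMP with the complexity characterization of toric varieties — both among the key tools highlighted in the introduction — and ensures that the quotient by $G/A$ produces a \emph{toric} quotient rather than merely a quotient of something crepant to toric.
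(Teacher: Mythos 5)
Your overall strategy — pass to the quasi-\'etale cover $Y\to X$ associated to $\pi_1(X^{\rm sm})$, transfer $g_2$ via the supergroup monotonicity (this is the paper's Lemma~\ref{lem:g2-supgroup}), apply the rank-two characterization to $(Y,G)$, and push the toric structure back down through the quotient — matches the route taken in Theorem~\ref{thm:fundamental-theorem-1}. The divergence is in how you handle the quotient, and that is where a genuine gap appears.

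The paper concludes directly from Theorem~\ref{thm:rank-2-automorphism}: since the crepant map $(Y,B_Y)\dashrightarrow (Y',B_{Y'})$ can be produced $G$-equivariantly (the MMP in the proof of that theorem is run equivariantly for the ambient group, not just the abelian subgroup $A$), one quotients both sides by $G$ at once to exhibit $(X,B)=(Y,B_Y)/G$ as crepant to the toric quotient $(Y',B_{Y'})/G$. You instead quotient first by $A$, obtain the toric pair $(Y'/A,B'_Y/A)$, and then try to descend the residual birational $G/A$-action and regularize it to a toric crepant model $(T,B_T)$. The pivotal sentence — ``since complexity is invariant under crepant birational equivalence and $(Y'/A,B'_Y/A)$ has complexity zero, Shokurov's complexity characterization forces $(T,B_T)$ to be toric'' — is not correct. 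Complexity $\dim X+\rho(X)-|\Sigma|$ is \emph{not} a crepant birational invariant: extracting a prime divisor $E$ with log discrepancy $a_E\in(0,1)$ is crepant, but it raises $\rho$ by one while adding only $1-a_E<1$ to $|\Sigma|$, so complexity strictly increases by $a_E$. Concretely, blowing up a non-torus-fixed point on the boundary of a toric log Calabi--Yau surface is a crepant modification that destroys toricness and raises complexity above zero. Hence regularizing the $G/A$-action by passing to a $G/A$-equivariant crepant model gives you no control on complexity, and Theorem~\ref{thm:char-toric-comp} does not apply. This is not a cosmetic issue: it is precisely the difficulty that the paper avoids by keeping the full $G$-equivariance on the crepant map $Y\dashrightarrow Y'$ and quotienting by all of $G$ in one step. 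If you want to repair your version, the fix is not to separate $A$ from $G/A$, but to invoke the $G$-equivariance of the $K_Y$-MMP (the entire construction in the proof of Theorem~\ref{thm:rank-2-automorphism} can be carried out $G$-equivariantly, with the normal abelian subgroup $A$ appearing only in the final identification inside $\mathbb{G}_m^2$), so that $(Y',B_{Y'})/G$ is already the desired toric quotient.
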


In Section~\ref{sec:large-fundamental}, we give a more general version of Theorem~\ref{introthm:fundamental}
which also deals with the case of log pairs.
We will also consider the case of fundamental groups of rank one. 
Furthermore, we prove that there is a bounded cover of $X$ which makes it a log crepant toric variety.

\begin{introdef}{\em
We say that a singularity $x\in(X,B)$ is a {\em toric quotient singularity} if it is the quotient of a singularity on a toric pair by a finite automorphism group.
Note that toric quotient singularities are a generalization of quotient singularities. 
They may not be $\qq$-factorial,
as toric singularities themselves may not be $\qq$-factorial.
Although the small $\qq$-factorialization of a toric quotient singularity has only quotient singularities.
Furthermore, toric quotient singularities are always log canonical.
We say that a singularity $x\in X$ is a {\em log toric quotient singularity} if $x\in (X,B)$ is a toric quotient singularity for some boundary $B$ on $X$.

We say that a log canonical singularity $x\in(X,B)$ is a {\em crepant toric quotient singularity}
if there exists a toric quotient singularity $y\in (Y,B_Y)$ and two projective morphisms 
$\phi_x \colon Z \rightarrow X$ and
$\phi_y \colon Z \rightarrow Y$
so that the following equality holds:
\[
\phi_x^*(K_X+B)=\phi_y^*(K_Y+B_Y).
\]
Finally, we say that a singularity $x\in X$ is a {\em log crepant toric quotient singularity} if $x\in (X,B)$ is a crepant toric quotient singularity for some boundary $B$.
We have natural inclusions between classes of singularities: 
\[
\{\text{Quotient singularities}\} 
\cup 
\{\text{Toric singularities}\} 
\subset
\{\text{Toric quotient singularities}\}
\subset 
\]
\[
\{
\text{Crepant toric quotient singularities}
\}\subset 
\{
\text{Log crepant toric quotient singularities}
\}.
\]
All the above inclusions are strict.
We may use the abbreviations 
{\em tq} (resp. {\em ctq} and {\em lctq})
to denote 
toric quotient (resp. crepant toric quotient and log crepant toric quotient).
}
\end{introdef}

In terms of the minimal model program 
all the above classes of singularities behave similarly.
The following theorem shows that 
lctq singularities and their deformations characterize klt $3$-fold singularities with a large Class group.

\begin{introthm}\label{introthm:large-class}
There exists a positive integer $N$ satisfying the following.
Let $x\in X$ be a $\qq$-factorial klt $3$-fold singularity.
If $G\leqslant {\rm Cl}(X;x)$ satisfies 
$g_3(G)\geq N$, then $x\in X$ degenerates to a lctq singularity.
\end{introthm}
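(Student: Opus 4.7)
My strategy is to combine a quasi-\'etale cover construction with a multigraded degeneration tailored to the local Class group action. First, using the hypothesis $g_3(G)\geq N$, I extract a subgroup $A\leqslant G$ of index at most $N$ with $\rank(A)\geq 3$ and of large order. The inclusion $A\leqslant {\rm Cl}(X;x)$ determines a quasi-\'etale cover $(Y,y)\to (X,x)$ with Galois group $A^{\vee}$. Since quasi-\'etale covers preserve klt, $y\in Y$ is a $\qq$-factorial klt germ, equipped with a faithful action of the finite rank-three abelian group $A^{\vee}$ of large order.

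Second, I would promote this to an action on a log Calabi--Yau pair via the local equivariant complements machinery used in the proofs of Theorems~A and~B, producing an $A^{\vee}$-invariant boundary $B_Y$ with $(Y,B_Y)$ log canonical and $N'(K_Y+B_Y)\sim 0$ near $y$ for some universal $N'$. The $A^{\vee}$-action decomposes $\mathcal{O}_{Y,y}$ into character eigenspaces, yielding a multigrading of rank three whose ``denominators'' are large. Taking the associated initial/graded degeneration of $(Y,B_Y)$ along this multigrading produces a flat one-parameter family whose special fiber $(Y_0,B_{Y_0})$ carries a $\mathbb{G}_m^{3}$-action extending $A^{\vee}$. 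As a normal three-dimensional germ with a faithful torus action of the same dimension, $Y_0$ is toric; the complexity-based characterization of toric log pairs then identifies $(Y_0,B_{Y_0})$ as a log Calabi--Yau toric pair. Taking the $A^{\vee}$-quotient yields a toric quotient pair $(X_0,B_{X_0})$ and a flat degeneration $x\in X \rightsquigarrow x_0\in X_0$, exhibiting $x_0\in X_0$ as the lctq degeneration of $x\in X$.

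The main obstacle is the degeneration step. One must verify that the initial degeneration of a singular klt pair carrying a large finite abelian action is flat and that log canonicity is preserved in the limit, and --- critically --- that the resulting torus acts effectively with full rank $3$ rather than collapsing onto a proper subtorus. This is precisely where the quantitative input $g_3(G)\geq N$ enters: $N$ must dominate both the three-dimensional complement bound and a weight-span estimate guaranteeing that three independent generators of $A^{\vee}$ have eigenweights on $\mathfrak{m}_y/\mathfrak{m}_y^{2}$ spanning a rank-three sublattice. A secondary subtlety is descending crepancy through the $A^{\vee}$-quotient, which follows from the invariance of $B_Y$ under $A^{\vee}$ and standard Galois descent for log Calabi--Yau pairs.
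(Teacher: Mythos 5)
Your approach diverges from the paper's proof in a fundamental way, and the divergence is exactly where your argument has a gap you yourself flag but do not close.

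The paper never attempts a direct multigraded degeneration of the three-fold germ. Instead it observes the injection ${\rm Cl}(X;x)\hookrightarrow\pi_1^{\rm alg}((X,\Delta);x)$ coming from index-one covers of Weil $\qq$-Cartier divisors, then reduces to the surface case. Concretely: pass to the universal quasi-\'etale cover $X'\to X$ with Galois group $G$, take the plt blow-up $Y'\to X'$ of $x'$ whose exceptional divisor $E'$ is a Fano type surface, and restrict the $G$-action and a bounded $G$-invariant complement to $E'$ by adjunction. The kernel of $G\to{\rm Aut}(E')$ is cyclic (it is the stabilizer acting along the exceptional direction), so Lemma~\ref{lem:g_k-under-quot} converts $g_3(G)\geq N$ into $g_2(G_{E'})\geq N$. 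Theorem~\ref{thm:rank-2-automorphism} then makes $(E',B_{E'})$ log crepant toric, and the degeneration of $x\in X$ to the cone over the toric quotient $(E,B_E)$ is imported wholesale from~\cite{LX16}*{\S 2.5}. The quantitative input $g_3$ therefore tracks the drop of one rank in passing from the $3$-fold germ to the Koll\'ar component, not any eigenweight-span estimate.

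The gap in your proposal is precisely the step you single out: there is no canonical ``initial/graded degeneration'' attached to a finite abelian grading of $\mathcal{O}_{Y,y}$. Flat toric degenerations arise from $\mathbb{Z}^k$-filtrations (Rees-algebra constructions attached to valuations or monomial orders), not from $(\mathbb{Z}/n_1\times\cdots\times\mathbb{Z}/n_3)$-gradings; lifting a finite abelian grading to a $\mathbb{Z}^3$-filtration so that the associated graded is flat, normal, and still log canonical is exactly the hard content of the degeneration theory of klt singularities, and your ``weight-span estimate'' on $\mathfrak{m}_y/\mathfrak{m}_y^2$ is not a substitute for it — in general the eigenweights of a finite abelian action on the cotangent space place no bound on the rank of any torus acting on a degenerate fiber. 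Moreover, even if you produced a normal special fiber with a faithful $\mathbb{G}_m^3$-action, you would still owe the reader a proof that log canonicity and crepancy pass to the limit, a non-trivial statement that the paper avoids by citing the cone degeneration over a Koll\'ar component. If you want to salvage your route, the realistic fix is to replace the ad hoc multigraded degeneration with the plt blow-up construction: it packages the required filtration, the degeneration, and the preservation of singularities into one reference.
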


In Section~\ref{sec:larg-class-group}, we give more general versions of Theorem~\ref{introthm:large-class}.
In these versions, we also consider the case of rank two.
The case of rank one is out of the scope of this article.

It is expected that all the theorems proved in this article can be generalized to higher dimensions.
The author will deal with the higher-dimensional cases in future papers.
This article hopes to be a foundation for the study
of Fano type varieties with large abelian automorphism groups and the corresponding applications to the study of klt singularities.

\subsection*{Acknowledgements}
The author would like to thank Caucher Birkar, Christopher Hacon, J\'anos Koll\'ar and Constantin Shramov for many useful comments.

\section{Preliminaries}

In this section, we recall the singularities of the minimal model program, introduce the theory of $G$-invariant complements, and
prove some preliminary results.
Throughout this article, we work over an algebraically closed field $\mathbb{K}$ of characteristic zero.
$\mathbb{G}_m^k$ stands for the $k$-dimensional $\mathbb{K}$-torus.
We will use some classic results of toric geometry over $\mathbb{K}$.
See~\cites{Ful93,CLS11,Cox95} for some references on toric geometry.

\subsection{Singularities of the minimal model program}

In this subsection, we recall classic definitions regarding the singularities of the minimal model program.
We follow the classic definitions for singularities of the minimal model program~\cites{KM98,Kol13,HK13}.

\begin{definition}
{\em A {\em contraction} is a morphism of quasi-projective varieties
$\phi\colon X\rightarrow Y$ so that $\phi_*\mathcal{O}_X=\mathcal{O}_Y$.
Note that $Y$ is normal provided that $X$ is normal.
A {\em fibration} is a contraction with positive dimensional general fiber.}
\end{definition}

\begin{definition}
{\em 
Let $\mathcal{Q}$ be a subset of $[0,1]$.
We define the set of {\em hyperstandard coefficients} associated to $\mathcal{Q}$,
denoted by $\mathcal{H}(\mathcal{Q})$, to be
\[
\left\{ 1-\frac{q}{m} \mid q\in \mathcal{Q}, m\in \mathbb{N}\right\}.
\]
If $\mathcal{Q}=\{0,1\}$, 
we say that $\mathcal{H}(\mathcal{Q})$ is the set of {\em standard coefficients}.
We say that a log pair $(X,B)$ has {\em standard coefficients} if the coefficients of $B$ are standard.
}
\end{definition}

\begin{definition}{\em 
A {\em log pair} $(X,\Delta)$ consist of a normal quasi-projective variety $X$ and an effective $\qq$-divisor $\Delta$,
such that the $\qq$-divisor $K_X+\Delta$ is $\qq$-Cartier.
Given a projective birational morphism $\pi\colon Y\rightarrow X$ from a normal quasi-projectiuve variety $Y$ and
a prime divisor $E$ on $Y$, 
we define the {\em log discrepancy} of the log pair $(X,\Delta)$ at the prime divisor $E$ to be
\[
a_E(X,\Delta):=1-{\rm coeff}_E(\pi^*(K_X+\Delta)).
\]
We say that a log pair $(X,\Delta)$ is {\em Kawamata log terminal} (resp. {\em log canonical}) if every log discrepancy is positive (resp. non-negative).
We may use the usual abbreviation lc (resp. klt) for log canonical (resp. Kawamata log terminal).
}
\end{definition}

\begin{definition}{\em 
Let $(X,\Delta)$ be a pair with log canonical singularities.
A prime divisor $E$ over $X$ is said to be a {\em log canonical place} if the corresponding log discrepancy equals zero.
The image of a log canonical place 
in $X$ is said to be a {\em log canonical center}.
}
\end{definition}

\begin{definition}
{\em
A log pair $(X,\Delta)$ is said to be {\em divisorially log terminal} or {\em dlt} if there is an open set $U\subset X$ which satisfies the following properties:
\begin{enumerate}
    \item the coefficients of $\Delta$ are at most one,
    \item $U$ is smooth and $\Delta|_U$ has simple normal crossing, and
    \item any log canonical center of $(X,\Delta)$ intersect $U$ non-trivially and is given by intersection of the strata of $\lfloor \Delta \rfloor$.
\end{enumerate}
Given a log canonical pair $(X,\Delta)$, we say that $\pi \colon Y\rightarrow X$ is a {\em $\qq$-factorial dlt modification}
if $Y$ is $\qq$-factorial, $\pi$ only extract log canonical places, and $\pi^*(K_X+\Delta)=K_Y+\Delta_Y$ is a divisorially log terminal pair.
The existence of $\qq$-factorial dlt modifications for log canonical pairs is well-known~\cite{KK13}.
We say that a dlt pair $(X,\Delta)$ is {\em purely log terminal} (or {\em plt} for short) if $(X,\Delta)$ has at most one log canonical center.
}
\end{definition}

\begin{definition}
{\em 
Let $(X,\Delta)$ be a log canonical pair and $D$ be a $\qq$-Cartier effective divisor on $X$.
We define the {\em log canonical threshold} of $D$ on $(X,\Delta)$, 
denoted by ${\rm lct}((X,\Delta);D)$,
to be the maximum positive rational number $t$, such that $K_X+\Delta+tD$ is log canonical.
}
\end{definition}

\begin{definition}{\em 
Let $(X,\Delta)$ be an lc pair and $x\in X$ a closed point. A {\em purely log terminal blow-up} of  $(X,\Delta)$ at $x\in X$ is a birational morphism $\pi \colon Y\rightarrow X$, such that
\begin{enumerate}
\item $\pi$ has a unique exceptional divisor $E$
\item the center of $E$ on $X$ is $x$,
\item $-E$ is ample over $X$, and
\item $(Y,\Delta_Y+E)$ is purely log terminal near $E$.
\end{enumerate}
Here, $\Delta_Y$ is the strict transform of $\Delta$ on $Y$.
We may write {\em plt blow-up} instead of {\em purely log terminal blow-up} to shorten the notation.
}
\end{definition}

\begin{definition}{\em 
Let $M$ be a positive integer.
Let $(X,\Delta)$ be a log canonical pair.
We say that an effective divisor $B\geq \Delta$ is a {\em $M$-complement} of the pair $(X,\Delta)$ if the following conditions hold:
\begin{enumerate}
    \item $(X,B)$ is a log canonical pair, and
    \item $M(K_X+B)\sim 0$.
\end{enumerate}
Note that if $(X,\Delta)$ admits a $M$-complement, then 
$-(K_X+\Delta)$ is a pseudo-effective divisor.
In the above case, we say that $(X,\Delta)$ is $M$-complemented.
We may also say that $-(K_X+\Delta)$ is $M$-complemented.

We say that an effective divisor $B\geq \Delta$ is a
{\em $\qq$-complement} of the pair $(X,\Delta)$,
if the following conditions hold:
\begin{enumerate}
    \item $(X,B)$ is a log canonical pair, and
    \item $K_X+B\sim_\qq 0$.
\end{enumerate}
Note that any $M$-complement is a $\qq$-complement.
In the above case, we say that $(X,\Delta)$ is $\qq$-complemented.
We may also say that $-(K_X+\Delta)$ is $\qq$-complemented.
}
\end{definition}

\begin{definition}{\em 
Let $X$ be a quasi-projective variety and $X\rightarrow Z$ be a projective contraction.
We say that $X$ is of {\em Fano type} over $Z$ if there exists a  boundary $\Delta$  big over $Z$ so that 
$(X,\Delta)$ is klt and
$K_X+\Delta\sim_{\qq,Z} 0$.
Recall that relatively Fano type varieties are relative Mori dream spaces over the base~\cite{BCHM10}.
In particular, every minimal model program for a divisor $D$ on $X$ over $Z$ terminates
with a good minimal model or a Mori fiber space over $Z$.

Let $X$ be a quasi-projective variety and $X\rightarrow Z$ be a projective contraction.
We say that $X$ is of {\em log Calabi--Yau type} over $Z$ if there exists a boundary $\Delta$ on $X$ so that 
$(X,\Delta)$ is log canonical and
$K_X+\Delta\sim_{\qq,Z} 0$.
If $Z={\rm Spec}(\mathbb{K})$, then we just say that $(X,B)$ is a log Calabi--Yau pair.
}
\end{definition}

\begin{definition}
{\em 
Let $X$ be a projective variety, $D$ be a prime divisor on $X$, and $\phi\colon X \rightarrow Z$ be a fibration.
We say that $D$ is {\em $\phi$-horizontal} if $D$ dominates $Z$.
Respectively, we say that $D$ is {\em $\phi$-vertical} if $D$ does not dominate $Z$.
We may say {\em vertical over $Z$} (resp.
{\em horizontal over $Z$}) instead of $\phi$-vertical (resp. $\phi$-horizontal)
when the morphism is not labeled.
When $\phi$ and $Z$ are clear from the context, we may just say that $D$ is vertical or horizontal. 
}
\end{definition}

\begin{definition}
{\em 
Let $(X,\Delta)$ be a log pair.
We say that $(X,\Delta)$ {\em admits a $\mathbb{G}_m$-action} if $X$ admits a $\mathbb{G}_m$-action so that
$\Delta$ is invariant.
}
\end{definition}

\begin{definition}
{\em
Let $(X,\Delta)$ be a log canonical pair.
We say that $(Y,\Delta_Y)$ is a {\em crepant model} (or {\em crepant transformation}) of $(X,\Delta)$ if
there is a birational map $\pi\colon Y\rightarrow X$,
which only extract divisors of with log discrepancy at most one with respect to the log pair $(X,\Delta)$,
and $\pi^*(K_X+\Delta)=K_Y+\Delta_Y$.
}
\end{definition}

\begin{definition}
{\em 
Let $X$ be a demi-normal scheme.
Denote by $D$ the conductor of $X$.
Let $B$ be an effective divisor on $X$ whose support does not contain a prime component of the conductor.
Let $\pi \colon X^\eta \rightarrow X$ be the normalization morphism of $X$.
Let $B^\eta$ be the divisorial part of $\pi^{-1}(B)$.
We say that $(X,B)$ is {\em semi log canonical} if $K_X+B$ is $\qq$-Cartier and $K_{X^\eta}+B^\eta+D^\eta$ is log canonical.
We write {\em slc} to abbreviate semi-log canonical.
}
\end{definition}

\subsection{Theory of $G$-invariant complements}

In this subsection, we introduce $G$-invariant complements and prove the existence of bounded $G$-invariant complements.
In what follows, we may use the $G$-equivariant version of some statements of the minimal model program.
See~\cites{CS11a,CS11b,CS12} for some results related to $G$-equivariant singularities of the minimal model program.

\begin{definition}{\em 
Let $X$ be a projective algebraic variety and
$G\leqslant {\rm Aut}(X)$ be a finite automorphism group.
We say that a log pair $(X,\Delta)$ is {\em $G$-invariant} if $g^*(\Delta)=\Delta$ for every element $g\in G$.
Note that this is equivalent to 
${\rm coeff}_P(\Delta)={\rm coeff}_{g^*(P)}(\Delta)$ for every prime divisor $P$ on $X$ and for every element $g\in G$.
Hence, we can define the coefficients of $\Delta$ over the prime $G$-orbits of $X$.
}
\end{definition}

\begin{definition}{\em 
Let $M$ be a positive integer.
Let $X$ be a projective algebraic variety and $G\leqslant {\rm Aut}(X)$ be a finite automorphism group.
Let $(X,\Delta)$ be a $G$-invariant log canonical pair.
We say that an effective divisor $B\geq \Delta$ is a {\em $G$-invariant $M$-complement} of the pair $(X,\Delta)$, if the following conditions hold:
\begin{enumerate}
    \item $(X,B)$ is $G$-invariant,
    \item $(X,B)$ is a log canonical pair, and
    \item $M(K_X+B)\sim 0$.
\end{enumerate}
Note that if $(X,\Delta)$ admits a $G$-invariant $M$-complement, then 
$-(K_X+\Delta)$ is a pseudo-effective divisor.

We say that an effective divisor $B\geq \Delta$ is a
{\em $G$-invariant $\qq$-complement} of the pair $(X,\Delta)$,
if the following conditions hold:
\begin{enumerate}
    \item $(X,B)$ is $G$-invariant,
    \item $(X,B)$ is a log canonical pair, and
    \item $K_X+B\sim_\qq 0$.
\end{enumerate}
Note that any $G$-invariant $M$-complement is a $G$-invariant $\qq$-complement.
We may write $G\qq$-complement instead of $G$-invariant $\qq$-complement to abbreviate.
}
\end{definition}

\begin{proposition}\label{prop:quotient-FT}
The quotient of a Fano type surface by a finite automorphism group is of Fano type.
\end{proposition}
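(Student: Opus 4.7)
The plan is to $G$-symmetrize the Fano type boundary on $X$ and descend it across the finite quotient $\pi\colon X \to Y := X/G$. I would start from a boundary $\Delta$ witnessing that $X$ is Fano type, so $(X,\Delta)$ is klt, $\Delta$ is big, and $K_X + \Delta \sim_\qq 0$, and replace it by the $G$-average
\[
\Delta' := \frac{1}{|G|}\sum_{g \in G} g^*\Delta.
\]
This divisor is $G$-invariant, has rational coefficients in $[0,1)$, and is again big since each $g^*\Delta$ is big and bigness is preserved under sums and positive rescaling. Working on a $G$-equivariant log resolution, log discrepancies are affine in the boundary, so $a_E(X,\Delta')$ is the average of the $a_E(X,g^*\Delta)$, all of which are positive because each $(X,g^*\Delta) \cong (X,\Delta)$ is klt; thus $(X,\Delta')$ is klt. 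Finally, averaging $K_X + \Delta \sim_\qq 0$ under the $G$-action gives $K_X + \Delta' \sim_\qq 0$.

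Next, because $\Delta'$ is $G$-invariant and effective and $\pi$ is a finite Galois map of normal quasi-projective varieties, there is a unique effective $\qq$-divisor $\Delta_Y$ on $Y$ with $\pi^*(K_Y+\Delta_Y) = K_X+\Delta'$; equivalently $\pi^*\Delta_Y = \Delta' + R$, where $R$ is the ramification divisor of $\pi$. I would then verify the Fano type properties for $(Y,\Delta_Y)$: it is klt by the standard descent of singularities through finite quotients (the log discrepancies of $(Y,\Delta_Y)$ and $(X,\Delta')$ are proportional along every prime divisor over $Y$, up to ramification); $\Delta_Y$ is big because its pullback $\Delta' + R$ is big and $\pi$ is finite, so pushforward of bigness is immediate; and $K_Y + \Delta_Y \sim_\qq 0$ because its pullback under the finite map $\pi$ is $\qq$-linearly trivial, and $\pi^*$ is injective on $\qq$-linear equivalence classes.

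The only delicate step is the klt equivalence under the finite quotient, but this is a classical fact that reduces to comparing log discrepancies on a common $G$-equivariant log resolution, so no genuine obstacle arises. Normality and quasi-projectivity of $Y$ are automatic since $X$ is normal quasi-projective and geometric quotients by finite groups exist in this category; the dimension of $Y$ remains two because $\pi$ is finite. Hence $(Y, \Delta_Y)$ exhibits $Y$ as a Fano type surface.
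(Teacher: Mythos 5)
Your proof is correct and follows essentially the same route as the paper's: replace the Fano type boundary by its $G$-average so that it becomes $G$-invariant while retaining klt, big, and $K_X+\Delta'\sim_\qq 0$, then descend to $Y=X/G$ via the Hurwitz formula and check that the resulting pair $(Y,\Delta_Y)$ is klt, has big boundary, and is $\qq$-linearly trivial. You simply spell out the descent steps (effectivity of $\Delta_Y$, the klt comparison, and why $\Delta_Y$ is big) that the paper treats as standard.
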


\begin{proof}
Let $\phi\colon X \rightarrow Y$ be the quotient of a Fano type surface by a finite automorphism group.
Let $\Delta$ be a boundary on $X$ so that
$K_X+\Delta \sim_\qq 0$ and the pair
$(X,\Delta)$ has klt singularities.
Let $B:=\frac{1}{|G|}\sum_{g\in G}g^*\Delta$.
The boundary $B$ is $G$-invariant,
$K_X+B\sim_\qq 0$, and $(X,B)$ has klt singularities.
Let $(Y,B_Y)$ be the quotient of the log pair
$(X,B)$ by $G$.
Then, we have that $(Y,B_Y)$ has klt singularities, $B_Y$ is a big divisor, 
and $K_Y+B_Y\sim_\qq 0$.
We conclude that $Y$ is a Fano type surface.
\end{proof}

\begin{proposition}\label{prop:q-compl}
Let $X$ be a Fano type variety and
$G\leqslant {\rm Aut}(X)$ be a finite subgroup.
Let $(X,\Delta)$ be a log canonical $G$-invariant pair.
Assume that $(X,\Delta)$ admits a $\qq$-complement.
Then $(X,\Delta)$ admits a $G$-invariant $\qq$-complement.
\end{proposition}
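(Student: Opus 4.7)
\medskip

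The plan is to produce the $G$-invariant $\qq$-complement by averaging an arbitrary $\qq$-complement over the finite group $G$. Concretely, let $B\geq \Delta$ be a $\qq$-complement of $(X,\Delta)$ whose existence is assumed, so $(X,B)$ is log canonical and $K_X+B\sim_\qq 0$. Define
\[
B_G \;:=\; \frac{1}{|G|}\sum_{g\in G} g^*B.
\]
Since $G$ acts by automorphisms, each $g^*B$ is again an effective $\qq$-divisor on $X$, and $B_G$ is manifestly $G$-invariant.

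Next I would verify the three properties required of a $G$-invariant $\qq$-complement. First, $B_G\geq \Delta$: from $B\geq \Delta$ and the $G$-invariance of $\Delta$ we get $g^*B\geq g^*\Delta=\Delta$ for every $g$, and averaging preserves the inequality. Second, $K_X+B_G\sim_\qq 0$: pulling back by each automorphism $g\in G$ sends the $\qq$-linear equivalence $K_X+B\sim_\qq 0$ to $K_X+g^*B=g^*(K_X+B)\sim_\qq 0$, and taking the average over $G$ yields the desired relation. Third, $(X,B_G)$ is log canonical: this is the convexity of log canonicity in the boundary. For any prime divisor $E$ over $X$ and any log resolution on which each $g^*B$ is computed, the log discrepancy is linear in the boundary, so
\[
a_E(X,B_G) \;=\; \frac{1}{|G|}\sum_{g\in G} a_E(X,g^*B) \;\geq\; 0,
\]
because each $(X,g^*B)$ is log canonical (as the pullback by an automorphism of a log canonical pair).

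The argument does not require anything beyond the fact that $G$ acts by biregular automorphisms and is finite; the Fano type hypothesis on $X$ is in fact not needed for this proposition (it ensures, via earlier results, that $\qq$-complements exist in the first place, but here we assume one is given). The only genuinely delicate point — and it is a completely standard one — is the convexity of the log discrepancy in the boundary $B$, which follows directly from the definition $a_E(X,B)=1-\operatorname{coeff}_E(\pi^*(K_X+B))$ and the $\qq$-linearity of pullback.

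No step is an obstacle: the entire argument is a single symmetrization, and the proof should fit in a short paragraph. The analogue for $G$-invariant $M$-complements (for a fixed bounded integer $M$) would be substantially harder, since averaging destroys integrality of the defining section; but for $\qq$-complements the symmetrization is exactly what is needed.
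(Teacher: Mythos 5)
Your averaging construction $B_G=\frac{1}{|G|}\sum_{g\in G}g^*B$ is exactly the one the paper uses (the paper calls it $\Gamma$), and your verification of the three required properties matches the paper's, with you simply spelling out the convexity of log discrepancies that the paper leaves implicit. The proposal is correct and takes essentially the same approach as the paper.
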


\begin{proof}
Let $B\geq \Delta$ be a $\qq$-complement for $(X,\Delta)$.
Consider the divisor $\Gamma:=\frac{1}{|G|}\sum_{g\in G}g^*B$.
Note that $\Gamma \geq \Delta$.
Indeed, for every element $g\in G$ we have
$g^*B\geq g^*\Delta=\Delta$, so 
$\sum_{g\in G}g^*B\geq |G|\Delta$, hence
$\Gamma\geq \Delta$.
The pair $(X,\Gamma)$ is log canonical and $G$-invariant.
Furthermore, by construction, we have that $K_X+\Gamma\sim_\qq 0$.
We conclude that $\Gamma\geq \Delta$ is a $G$-invariant $\qq$-complement of $(X,\Delta)$.
\end{proof}

\begin{proposition}\label{prop:hurwitz}
Let $X$ be an algebraic variety and $G\leqslant {\rm Aut}(X)$ a finite automorphism group.
Let $(X,\Delta)$ be a $G$-invariant log canonical pair.
Let $\pi\colon X\rightarrow Y$ be the quotient by $G$.
Then, there exists a log canonical pair $(Y,\Delta_Y)$ so that $\pi^*(K_Y+\Delta_Y)=K_X+\Delta$.
Furthermore, the coefficients of $\Delta_Y$ have the form 
$\frac{1-m+\lambda}{m}$, where $m$ is a positive integer and $\lambda$ is a coefficient of $\Delta$.
\end{proposition}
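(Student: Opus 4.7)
The plan is to write $\pi\colon X\to Y$ as the quotient of a normal variety by a finite group acting on it, so that $Y$ is automatically normal and $\pi$ is a finite surjective morphism that is étale in codimension one outside the branch locus. I would then define $\Delta_Y$ divisor by divisor using the ramification data, and verify log canonicity by standard behavior of log discrepancies under finite Galois covers.

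More concretely, first I would recall that for a prime divisor $D_Y$ on $Y$, the preimage $\pi^{-1}(D_Y)$ is a union of prime divisors $D_1,\dots,D_r$ on $X$ forming a single $G$-orbit. By the $G$-invariance of $\Delta$, the coefficients $\text{coeff}_{D_i}(\Delta)=:\lambda$ are all equal, and by the transitivity of $G$ on the orbit, the ramification indices $m_{D_i}=:m$ are all equal and are given by the order of the inertia subgroup at a generic point of $D_i$. Then the classical ramification formula (which holds because $\pi$ is étale in codimension one away from the branch divisor, so both sides of the equality below can be checked at a generic point of each $D_i$, where the local picture is $\text{Spec}\,\mathbb{K}[t]\to \text{Spec}\,\mathbb{K}[t^m]$) gives
\[
K_X=\pi^* K_Y+\sum_{D}(m_D-1)\,D,
\]
where the sum is over prime divisors $D$ on $X$ contracted into the branch locus.

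Next, for each prime $D_Y\subset Y$ with generic ramification index $m$ and generic $\Delta$-coefficient $\lambda$ along the orbit above it, I would define
\[
\text{coeff}_{D_Y}(\Delta_Y):=\frac{m-1+\lambda}{m},
\]
which is the formula in the statement (modulo the obvious sign convention). The $G$-invariance of $\Delta$ guarantees that this is well-defined, independently of which divisor in the orbit is chosen. Comparing coefficients of $\pi^*(K_Y+\Delta_Y)$ against $K_X+\Delta$ on every prime divisor of $X$ (using that $\pi^*D_Y=m(D_1+\cdots+D_r)$ along the $G$-orbit and that $\pi$ is étale in codimension one elsewhere) directly verifies the desired equality $\pi^*(K_Y+\Delta_Y)=K_X+\Delta$.

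The main obstacle is showing that $(Y,\Delta_Y)$ is log canonical. This I would deduce from the standard fact that for a finite surjective morphism $\pi\colon X\to Y$ between normal varieties and a $\qq$-divisor $\Delta_Y$ on $Y$ for which $K_Y+\Delta_Y$ is $\qq$-Cartier, the pair $(Y,\Delta_Y)$ is log canonical if and only if $(X,\Delta)$ is log canonical, where $K_X+\Delta:=\pi^*(K_Y+\Delta_Y)$. Concretely, given a log resolution $f\colon \widetilde Y\to Y$ of $(Y,\Delta_Y)$, one takes the normalization of $X\times_Y \widetilde Y$ to produce a corresponding birational model over $X$, and the log discrepancies transform compatibly under $\pi$ (they pick up the ramification factor $m$, but both remain simultaneously $\geq 0$). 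Since $(X,\Delta)$ is log canonical by hypothesis, this forces the same for $(Y,\Delta_Y)$, completing the proof.
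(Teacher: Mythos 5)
Your proof is correct and follows essentially the same route as the paper: first establish the ramification/Hurwitz pull-back formula for $K_X$, then extend to the $G$-invariant boundary $\Delta$ using the fact that $G$-invariance forces equal coefficients and equal ramification indices along a $G$-orbit, and finally verify log canonicity of $(Y,\Delta_Y)$. The only real difference is in the last step: the paper simply cites Shokurov~\cite{Sho92}*{Proposition 2.2} for the descent of log canonicity under $\pi$, whereas you unfold the standard argument yourself — pulling back a log resolution of $(Y,\Delta_Y)$ via the normalized fiber product, and using the transformation rule $a_{E'}(X,\Delta)=r\cdot a_E(Y,\Delta_Y)$ for divisors $E'$ over $X$ lying over divisors $E$ over $Y$ with ramification index $r$, which preserves the sign of the log discrepancy. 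This is precisely the content of the cited result (also Koll\'ar--Mori, Proposition 5.20), so you gain self-containedness at the expense of brevity; there is no mathematical gain or loss.

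One small point worth recording: the formula $\tfrac{1-m+\lambda}{m}$ in the proposition statement is a typographical slip; the paper's own proof, like yours, derives $\tfrac{m-1+\lambda}{m}$, which is the effective coefficient lying in $[0,1]$ when $\lambda\in[0,1]$. You correctly flag this as a sign convention issue. Also note that both you and the paper implicitly use that $K_Y+\Delta_Y$ is $\qq$-Cartier so that the pull-back is meaningful; this descent is standard for quotients by finite groups (e.g.\ because a $G$-invariant Cartier multiple $m(K_X+\Delta)$ descends to a Cartier divisor on $Y$ after possibly replacing $m$ by $m|G|$), but it is a step you should be aware is being suppressed.
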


\begin{proof}
First, we prove the statement for $K_X$.
By the Hurwitz formula, we can write
$K_X=\pi^*(K_Y)+R$ where $R$ is the ramification divisor.
Let $P$ be a prime divisor of $X$ contained in the support of $R$.
Then, for every element $g\in G$, we have that
${\rm coeff}_{g^*P}(X)={\rm mult}_P(\pi)=
{\rm ram}_P(\pi)+1$.
This means that all the prime divisors on the $G$-orbit of $P$ appear with the same coefficient on $R$.
Let $P_1,\dots,P_k$ be the image of all the $G$-orbits of prime components of $R$.
Let $m_i={\rm mult}_{P_i}(\pi)$ be the multiplicity of $\pi$ at such prime $G$-orbits.
Then, we have that $\pi^*\left( \sum_{i=1}^k \frac{m_i-1}{m_i}P_i\right)=R$.
We conclude that
\[
K_X=\pi^*\left(K_Y+\sum_{i=1}^k \frac{m_i-1}{m_i}P_i \right).
\]
The above pull-back formula also follows from~\cite{Sho92}*{Proposition 2.1}.
Now, let's consider the $G$-invariant boundary $\Delta$.
For every $G$-orbit of a prime divisor $P$ on $X$, we define $b_i:={\rm coeff}_{P}(\Delta)$ 
and $n_i:={\rm mult}_P(\pi)$.
Let $Q_1,\dots,Q_s$ be the images of all the $G$-orbits contained in the support of $\Delta$.
Then, we have that
$\Delta=\pi^*( \sum_{i=1}^s \frac{b_i}{n_i}Q_i)$.
Thus, we have an equality 
\begin{equation}\label{eq:pull-back-finite}
K_X+B = 
\pi^*\left(
K_Y+\sum_{i=1}^k \frac{m_i-1}{m_i}P_i 
+\sum_{i=1}^s \frac{b_i}{n_i}Q_i
\right)
\end{equation}
We define
\[
\Delta_Y:= \frac{m_i-1}{m_i}P_i 
+\sum_{i=1}^s \frac{b_i}{n_i}Q_i.
\]
The fact that $(Y,\Delta_Y)$ is a log canonical pair follows from~\cite{Sho92}*{Proposition 2.2}.
Note that equation~\eqref{eq:pull-back-finite} implies that the coefficients of $\Delta_Y$ have the form $\frac{m-1+\lambda}{m}$ where $m$ is a positive integer and $\lambda$ is a coefficient of $\Delta$. 
Indeed, if $n_i>1$ for some $i\in \{1,\dots,s\}$, then $P_j=Q_i$ for some $j$
and $m_i=n_j$.
\end{proof}

The following theorem is the boundedness of $G$-invariant complements for Fano type varieties.
It is a consequence of the boundedness of complements proved in~\cite{Bir16a}.

\begin{theorem}\label{thm:g-inv-compl}
Let $n$ be a positive integer.
Let $\Lambda\subset \qq$ be a set satisfying
the descending chain condition with rational accumulation points.
There exists a positive integer $M:=M(\Lambda,n)$, only depending on $\Lambda$ and $n$, satisfying the following.
Let $X$ be a $n$-dimensional Fano type variety and
$\Delta$ a boundary on $X$ such that the following conditions hold:
\begin{enumerate}
    \item $G\leqslant {\rm Aut}(X)$ is a finite subgroup,
    \item $(X,\Delta)$ is log canonical and $G$-invariant,
    \item the coefficients of $\Delta$ belong to $\Lambda$, and
    \item $-(K_X+\Delta)$ is $\qq$-complemented.
\end{enumerate}
Then, there exists a boundary $B\geq \Delta$ on $X$, such that the following holds:
\begin{enumerate}
\item $(X,B)$ is $G$-invariant,
\item $(X,B)$ is log canonical, and
\item $M(K_X+B)\sim 0$.
\end{enumerate}
\end{theorem}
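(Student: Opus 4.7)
The plan is to reduce to the non-equivariant boundedness of complements of Birkar~\cite{Bir16a} by passing to the quotient $Y := X/G$ and pulling a bounded complement on $Y$ back to $X$.

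By Proposition~\ref{prop:q-compl}, I may replace $\Delta$ with a $G$-invariant $\mathbb{Q}$-complement $\tilde B\geq \Delta$ of $(X,\Delta)$, since any $M$-complement of $(X,\tilde B)$ is automatically an $M$-complement of $(X,\Delta)$. Let $\pi\colon X\to Y$ be the quotient morphism. By Proposition~\ref{prop:quotient-FT}, $Y$ is of Fano type. Applying Proposition~\ref{prop:hurwitz} to $(X,\Delta)$ and to $(X,\tilde B)$ descends them to log canonical pairs $(Y,\Delta_Y)$ and $(Y,\tilde B_Y)$ with $\pi^*(K_Y+\Delta_Y)=K_X+\Delta$ and $\pi^*(K_Y+\tilde B_Y)=K_X+\tilde B$; since $\pi$ is finite, $K_Y+\tilde B_Y \sim_{\mathbb{Q}} 0$, so $(Y,\Delta_Y)$ is $\mathbb{Q}$-complemented.

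By Proposition~\ref{prop:hurwitz}, the coefficients of $\Delta_Y$ lie in the set
\[
\Lambda' := \left\{\frac{m-1+\lambda}{m}\ \middle|\ m\in \mathbb{N}_{>0},\ \lambda\in \Lambda\cup\{0\}\right\}.
\]
A short case split, separating the subsequences where the denominator $m$ is bounded from those where $m \to \infty$, shows that $\Lambda'$ is DCC with rational accumulation points, precisely because $\Lambda$ is. I then apply Birkar's theorem on boundedness of complements~\cite{Bir16a} to the $n$-dimensional Fano type pair $(Y,\Delta_Y)$ with coefficients in $\Lambda'$, obtaining a positive integer $M = M(\Lambda',n) = M(\Lambda,n)$ and an $M$-complement $B_Y \geq \Delta_Y$ on $Y$.

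Finally I pull $B_Y$ back by defining $B := \pi^*(K_Y+B_Y) - K_X$. Using the Hurwitz formula $K_X = \pi^*K_Y + R$ from Proposition~\ref{prop:hurwitz}, I rewrite $B = \pi^* B_Y - R$, and combined with $\pi^*\Delta_Y = \Delta+R$ the inequality $B_Y\geq \Delta_Y$ yields $B\geq \Delta$. The pair $(X,B)$ is $G$-invariant, being pulled back from $Y = X/G$; it is log canonical because log canonicity is preserved by finite Galois covers (compare \cite{Sho92}*{Proposition 2.2}); and it satisfies $M(K_X+B)\sim 0$ since the analogous relation holds on $Y$. The main technical point, and the expected source of friction, is the combinatorial verification that the pushed-down coefficient set $\Lambda'$ remains within the DCC/rational-accumulation regime required by Birkar's theorem; once this is in place the rest is a formal consequence of the compatibility of complements under the quotient $\pi$.
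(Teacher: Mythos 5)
Your proof follows the same high-level strategy as the paper's: descend to the quotient $Y=X/G$ via Propositions~\ref{prop:quotient-FT}, \ref{prop:q-compl} and~\ref{prop:hurwitz}, verify that the descended coefficient set remains DCC with rational accumulation points, produce a bounded complement on $Y$, and pull it back to $X$. The pullback bookkeeping in your last paragraph is correct.

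The gap is in the middle step, where you write that you ``apply Birkar's theorem on boundedness of complements~\cite{Bir16a} to the $n$-dimensional Fano type pair $(Y,\Delta_Y)$ with coefficients in $\Lambda'$.'' Two hypotheses of the relevant statements in~\cite{Bir16a} are not satisfied as things stand. First, those theorems require $-(K_Y+\Delta_Y)$ to be nef; from $(Y,\Delta_Y)$ being $\qq$-complemented you only know that $-(K_Y+\Delta_Y)$ is effective, hence pseudo-effective. Second, the coefficient hypothesis in~\cite{Bir16a} is typically membership in a hyperstandard set $\Phi(\mathcal{R})$ for a \emph{finite} rational set $\mathcal{R}$, while your $\Lambda'$ (the paper's $\mathcal{H}(\Lambda)$) is an a priori infinite DCC set. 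The paper deals with both points explicitly: it runs a minimal model program for $-(K_Y+\Delta_Y)$ on the Mori dream space $Y$ to reach a model $Z$ on which $-(K_Z+\Delta_Z)$ is semiample, invokes~\cite{Bir16a}*{Proposition~6.1.(3)} to transfer an $M$-complement on $Z$ back to $Y$, and then cites~\cite{FM18}*{Theorem~1.2} --- not~\cite{Bir16a} --- for the existence of a bounded complement on $(Z,\Delta_Z)$ with coefficients in a DCC set. You need to add this MMP reduction (or some substitute making $-(K_Y+\Delta_Y)$ nef) and point to a version of the complement theorem that actually accepts DCC coefficient sets; as written, the invocation of~\cite{Bir16a} does not close the argument.
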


\begin{proof}
Let $(Y,\Delta_Y)$ be the quotient of $(X,\Delta)$ by $G$.
Here, $(Y,\Delta_Y)$ is the pair constructed in Proposition~\ref{prop:hurwitz}.
In particular, we have $K_X+\Delta=\pi^*(K_Y+\Delta_Y)$.
By Proposition~\ref{prop:quotient-FT}, we know that $Y$ is a Fano type variety.
By Proposition~\ref{prop:q-compl}, we know that $(X,\Delta)$ admits a $G\qq$-complement.
Hence, $(Y,\Delta_Y)$ admits a $\qq$-complement $\Gamma_Y\geq \Delta_Y$.
In particular, it is a Mori dream space.
Note that $-(K_Y+\Delta_Y)$ is a pseudo-effective divisor and the log pair $(Y,\Delta_Y)$ is log canonical.
We claim that the coefficients of $\Delta_Y$ belongs to a set $\mathcal{H}(\Lambda) \subset \qq$ which only depends on $\Lambda$, satisfies the descending chain condition, 
and has rational accumulation points.
By Proposition~\ref{prop:hurwitz}, the coefficient of $\Delta_Y$ at a prime divisor $P$ has the form
\[
{\rm coeff}_P(\Delta_Y):=
1-\frac{1}{m}+\frac{\lambda}{m}.
\]
Here, $\lambda\in \Lambda$
and $m$ is the multiplicity index
of $X\rightarrow Y$ at prime component of $X$.
Note that it suffices to find a $M$-complement for $(Y,\Delta_Y)$
and pull it back to $X$.
Moreover, we may run a minimal model program for $-(K_Y+\Delta_Y)$.
This minimal model program is denoted by $Y\dashrightarrow Z$.
It terminates given that $Y$ is a Mori dream space.
Since the divisor $-(K_Y+\Delta_Y)$ is pseudo-effective, then the divisor $-(K_Z+\Delta_Z)$ is semiample.
Note that $(Z,\Delta_Z)$ remains log canonical
and $\qq$-complemented.
Indeed, all the steps of this minimal model program are $\qq$-trivial for the log canonical pair $(Y,\Gamma_Y)$ and $\Gamma_Z\geq\Delta_Z$.
By~\cite{Bir16a}*{Proposition 6.1.(3)}, it suffices to produce a $M$-complement for
$(Z,\Delta_Z)$.
The existence of a $M$-complement for $(Z,\Delta_Z)$ is proved in~\cite{FM18}*{Theorem 1.2}.
Furthermore, $M$ only depends on $\mathcal{H}(\Lambda)$ and $n$, hence it only depends on $\Lambda$ and $n$.
\end{proof}

\begin{proposition}\label{prop:G-equiv-cbf}
Let $M$ be a positive integer.
There exists a positive $M':=M'(M)$, only depending on $M$, satisfying the following.
Let $X$ be a projective surface
and $G\leqslant {\rm Aut}(X)$ a finite automorphism group.
Let $(X,B)$ be a $G$-invariant log canonical pair so that
$M(K_X+B)\sim 0$.
Let $\phi \colon X\rightarrow C$ be a $G$-equivariant fibration to a curve $C$.
Then, we can write
\[
K_X+B \sim \phi^*(M'(K_C+B_C)),
\]
where $M'(K_C+B_C)\sim 0$ and 
$(C,B_C)$ is invariant with respect to the action induced on the base.
\end{proposition}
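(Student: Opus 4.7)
The plan is to deduce this from the canonical bundle formula applied to the fibration $\phi$ together with an equivariance observation. Since $\phi\colon X\to C$ is a $G$-equivariant fibration to a curve and $(X,B)$ is a log canonical $G$-invariant pair with $M(K_X+B)\sim 0$, I would first apply the Kawamata--Fujino--Mori canonical bundle formula to $(X,B)\to C$. This yields
\[
K_X+B\sim_{\qq}\phi^{*}(K_C+B_C+M_C),
\]
where $B_C$ is the discriminant (boundary) part and $M_C$ is the moduli part. Here $B_C=\sum_{P}(1-\lct_{P})P$, with $\lct_{P}$ the log canonical threshold of $\phi^{*}P$ along the generic point of $\phi^{*}P$ with respect to $(X,B)$, and the class of $M_C$ is determined by the variation of the log canonical fibers.

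Second, I would verify $G$-invariance. Both $B_C$ and the class of $M_C$ are defined intrinsically from the singularities and moduli of the fibration, so $g^{*}B_C=B_C$ and $g^{*}M_C\sim_{\qq}M_C$ for every $g\in G$. Averaging $M_C$ over $G$ (as in Proposition~\ref{prop:q-compl}) produces a $G$-invariant representative in its class, which allows us to treat $B_C+M_C$ as a genuine $G$-invariant $\qq$-boundary on $C$.

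Third, I would bound the index $M'$. Restricting $M(K_X+B)\sim 0$ to a general fiber $F$ (a smooth curve) shows that $(F,B_F)$ is a log canonical pair with $M(K_F+B_F)\sim 0$; in particular $F$ is either a rational curve with at most two marked boundary points or an elliptic curve. In either case, the moduli part of a fibration by such log Calabi--Yau curves is $b$-semiample with denominator bounded in terms of $M$; this is the effective canonical bundle formula in relative dimension one (Prokhorov--Shokurov and its refinements). Simultaneously, $B_C$ has coefficients of the form $1-\tfrac{q}{m}$ with $m\mid M$ (after multiplying through by $M$ on $K_X+B$), so $B_C\in\mathcal{H}(\{0,1,\dots,M\})/M$. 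Together these give a universal $M'=M'(M)$ such that $M'(K_C+B_C+M_C)\sim 0$. Absorbing $M_C$ into the boundary by setting $B_C:=B_C+M_C$ (an effective $G$-invariant $\qq$-divisor on $C$, which is just a curve so effectivity is automatic after adjusting in its class), we obtain
\[
K_X+B\sim_{\qq}\phi^{*}(K_C+B_C),\qquad M'(K_C+B_C)\sim 0,
\]
with $(C,B_C)$ invariant under the induced $G$-action. Multiplying by $M'$ gives the displayed linear equivalence in the statement.

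The main obstacle is the third step: extracting a uniform bound on the denominator of the moduli part $M_C$ that depends only on $M$. This rests on the effective freeness of the moduli part for relative dimension one Calabi--Yau fibrations; once that is in hand, the $G$-equivariance of the first two steps is essentially formal, because every ingredient of the canonical bundle formula (discriminant part, moduli $j$-function, averaged representatives) transports canonically along $G$.
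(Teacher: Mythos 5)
Your approach is correct in outline but takes a genuinely different route from the paper's. You apply the Kawamata--Fujino--Mori canonical bundle formula directly to $\phi\colon (X,B)\to C$ and then argue that every ingredient (discriminant part, moduli part) transports equivariantly, fixing up representatives by averaging. The paper instead \emph{quotients first}: it forms the commutative square with vertical quotient maps $X\to Y=X/G$ and $C\to C_Y=C/G_b$, observes via Proposition~\ref{prop:hurwitz} that the induced pair $(Y,B_Y)$ has coefficients in the hyperstandard set $\mathcal{H}\bigl(\nn[\tfrac{1}{M}]\cap[0,1]\bigr)$ (which depends only on $M$), applies the effective canonical bundle formula of Prokhorov--Shokurov to $\phi_Y\colon(Y,B_Y)\to C_Y$ to get a bounded $M'$, and then pulls $K_{C_Y}+B_{C_Y}$ back to $C$. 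This sidesteps all equivariance questions: the pullback to $C$ of a divisor on $C_Y=C/G_b$ is automatically $G_b$-invariant, and the pullback of $K_Y+B_Y$ to $X$ is $K_X+B$ by construction. Your route is also sound, but it requires two pieces of bookkeeping the paper avoids: (i) verifying that the discriminant coefficients $1-\lct_P$ computed on $X$ lie in a DCC set determined by $M$ (true since the coefficients of $B$ lie in $\tfrac{1}{M}\mathbb{Z}\cap[0,1]$, but this needs to be said), and (ii) choosing a $G_b$-invariant effective representative of the moduli class on $C$, which your averaging argument handles but which you only sketch. The quotient-first approach buys you a cleaner reduction to the non-equivariant effective CBF, at the cost of invoking Proposition~\ref{prop:hurwitz}; your approach is more self-contained in its use of the CBF but transfers the work to equivariance checks. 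One small imprecision to flag: your phrase ``$B_C$ has coefficients of the form $1-\tfrac{q}{m}$ with $m\mid M$'' isn't quite how the discriminant part works (the $m$ arises from multiplicities of fibers, not from divisibility of $M$); the correct statement is that the $\lct_P$ lie in a DCC set determined by the coefficient set of $B$, which suffices for the effective CBF.
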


\begin{proof}
We have an exact sequence
\[
1\rightarrow G_f \rightarrow G \rightarrow G_b \rightarrow 1, 
\]
where $G_b$ is the quotient group acting on $C$ and $G_f$ is the subgroup of $G$ acting fiber-wise.
We obtain a commutative diagram as follows:
\[
 \xymatrix{
X \ar[r]^-{\pi} \ar[d]_-{\phi} & Y \ar[d]^-{\phi_Y} \\
C \ar[r]^-{\pi_C} & C_Y \\ 
 }
\]
Here, $Y$ is the quotient of $X$ by $G$
and $C_Y$ is the quotient of $C$ by $G_b$.
Observe that the coefficients of the log pair
$(Y,B_Y)$ belong to the hyperstandard set $\mathcal{H}\left(\nn\left[\frac{1}{M} \right]\cap [0,1]\right)$.
Note that this set only depends on $M$.
Hence, we may apply the effective canonical bundle formula (see, e.g.,~\cite{PS01}). We obtain a log pair $(C_Y,B_{C_Y})$ with 
\[
K_Y+B_Y \sim \phi^*_Y(M'(K_{C_Y}+B_{C_Y}))
\]
where $M'(K_{C_Y}+B_{C_Y})\sim 0$.
Here, $M'$ only depends on $\mathcal{H}\left(\nn\left[\frac{1}{M} \right]\cap [0,1]\right)$, which only depends on $M$.
Hence, the pair obtained by pulling-back 
$K_{C_Y}+B_{C_Y}$ to $C$ satisfies the conditions of the statement.
\end{proof}

\subsection{$k$-generation order}

In this subsection, we collect some lemmas about the $k$-generation order
and recall the Jordan property for finite birational automorphism groups of Fano type varieties.

\begin{definition}{\em 
We define the {\em rank up to index $N$},
denoted by $r_N(G)$, to be the minimum rank among subgroups of $G$ of index at most $N$.
If $r_N(G)=k$, then any subgroup generated by at most $k-1$ elements has index larger than $N$.
In particular, if $r_N(G)\geq k$, then the minimum rank of $G$ is at least $k$.
Note that 
$r_1(G)=\rank(G)$ 
and $r_N(G)=1$ for every $N\geq |G|$.
In general, $r_N(G)\in \{1,\dots,\rank(G)\}$ for every positive integer $N$.
}
\end{definition}

\begin{definition}{\em 
Let $G$ be a finite group.
We define the {\em $k$-generation order},
denoted by $g_k(G)$, 
to be:
\[
\max \{ N \mid |G|\geq N \text{ and } r_N(G)\geq k\}.
\]
Note that any finite group satisfies $g_1(G)=|G|$.
So, having large one-generation is the same as having large cardinality.
}
\end{definition}

\begin{lemma}\label{lem:k-gen-subgroup}
Let $G$ be a finite group with $g_k(G)\geq N$.
Let $A\leqslant G$ be a subgroup of index at most $J$.
Then $g_k(A)\geq N/J$.
\end{lemma}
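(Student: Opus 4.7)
The plan is to unravel the definitions and exploit the multiplicativity of indices in a tower of subgroups. Recall that $g_k(A) \geq N/J$ is equivalent to the conjunction of two statements: $|A| \geq N/J$, and every subgroup of $A$ of index at most $N/J$ has rank at least $k$ (the latter being $r_{N/J}(A) \geq k$).

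First I would dispatch the cardinality condition. Since $[G:A] \leq J$ and $|G| \geq g_k(G) \geq N$, we have $|A| = |G|/[G:A] \geq N/J$. This is essentially automatic.

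Next, the rank condition. Let $B \leq A$ be any subgroup with $[A:B] \leq N/J$. The key observation is that $B$ is also a subgroup of $G$, and by multiplicativity of the index in the tower $B \leq A \leq G$, we have
\[
[G:B] = [G:A]\cdot[A:B] \leq J \cdot \frac{N}{J} = N.
\]
Since $g_k(G) \geq N$ means $r_N(G) \geq k$, every subgroup of $G$ of index at most $N$ has rank at least $k$. Applying this to $B$ yields $\rank(B) \geq k$, and since $B$ was an arbitrary subgroup of $A$ of index at most $N/J$, we conclude $r_{N/J}(A) \geq k$.

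Combining both steps gives $g_k(A) \geq N/J$ by the definition of the $k$-generation order. There is no real obstacle here; the statement is a direct consequence of the multiplicativity of indices, and the entire argument is essentially a one-line calculation once the definitions are unpacked. The only thing to be slightly careful about is to phrase the conclusion using the $\max$ in the definition of $g_k$: we have exhibited that $|A| \geq N/J$ and $r_{N/J}(A) \geq k$, which is exactly what is required for $N/J$ to lie in the set whose maximum defines $g_k(A)$.
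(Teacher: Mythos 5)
Your proof is correct and is essentially the same as the paper's: both rest on the observation that a subgroup $B\leqslant A$ of index at most $N/J$ in $A$ has index at most $J\cdot(N/J)=N$ in $G$, so $r_N(G)\geq k$ forces $\rank(B)\geq k$. The paper phrases this as a proof by contradiction while you argue directly, but this is a purely cosmetic difference.
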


\begin{proof}
Note that $|A|\geq N/J$.
Assume that $r_{N/J}(A)<k$.
Then, there exists a subgroup $H\leqslant A$ of index $\leq N/J$ which is generated by strictly less than $k$ elements.
Note that $H\leqslant G$ is a subgroup of index $\leq N$ which is generated by strictly less than $k$ elements.
This contradicts the fact that $r_N(G)\geq k$.
We conclude that $|A|\geq N/J$ and 
$r_{N/J}(A)\geq k$.
Thus, we have that $g_k(A)\geq N/J$.
\end{proof}

\begin{lemma}\label{lem:cyclic-2-gen}
Let $G$ be a finite group with $g_2(G)\geq N$.
Assume that we have an exact sequence
\[
1\rightarrow \zz_n \rightarrow G \rightarrow \zz_m \rightarrow 1.
\]
Then, $\min\{m,n\}\geq N$.
\end{lemma}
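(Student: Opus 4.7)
The plan is to argue by contradiction. The hypothesis $g_2(G)\geq N$ unpacks to $|G|\geq N$ together with $r_N(G)\geq 2$; the latter says that every subgroup of $G$ of index at most $N$ requires at least two generators, in particular no cyclic subgroup of $G$ has index $\leq N$. So I will suppose that $\min\{m,n\}<N$ and exhibit in each case a cyclic subgroup of $G$ of index strictly less than $N$, contradicting $r_N(G)\geq 2$.

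First I would dispatch the case $m<N$. The kernel of the surjection $G\twoheadrightarrow \zz_m$ is precisely $\zz_n$, so $\zz_n\leqslant G$ is a cyclic subgroup of index $m<N$. This immediately contradicts $r_N(G)\geq 2$.

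Next I would handle the case $n<N$. Pick any $\sigma\in G$ whose image in $\zz_m$ is a generator of the quotient. Then the cyclic subgroup $\langle \sigma \rangle\leqslant G$ surjects onto $\zz_m$, so $|\langle \sigma \rangle|$ is a multiple of $m$, say $|\langle \sigma \rangle|=mk$ where $k$ divides $n=|G|/m$. The index of $\langle \sigma \rangle$ in $G$ is therefore $n/k\leq n<N$, again producing a cyclic subgroup of index less than $N$ and contradicting $r_N(G)\geq 2$.

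The proof is essentially bookkeeping with indices, so there is no real obstacle; the only point that warrants care is observing that in the second case one cannot simply use the subgroup $\zz_n$ (which may have large index), and must instead lift a generator of the cyclic quotient to produce a cyclic subgroup of small index. Combining the two cases shows that the assumption $\min\{m,n\}<N$ is untenable, yielding $\min\{m,n\}\geq N$.
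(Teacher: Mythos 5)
Your proof is correct and follows essentially the same route as the paper's: in the case $m<N$ you use the kernel $\zz_n$ directly, and in the case $n<N$ you lift a generator of the cyclic quotient to obtain a cyclic subgroup of index at most $n$. The only difference is cosmetic — you spell out the divisibility bookkeeping ($|\langle\sigma\rangle|=mk$ with $k\mid n$) a bit more explicitly than the paper, which simply observes that the order of the lift is at least $m$.
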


\begin{proof}
If $m<N$, then $\zz_n$ is a cyclic subgroup of $G$ of index bounded by $N$, contradicting the fact that $r_N(G)\geq 2$.
On the other hand, if $n<N$, we can consider an element $h$ which maps to $1_m \in \zz_m$.
Let $H\leqslant G$ be the cyclic subgroup generated by $h$.
Note that the order of $h$ in $G$ is at least $m$.
Hence, the index of $H$ in $G$ is at most $n$.
We conclude that $H$ is a cyclic subgroup of $G$ with index bounded by $N$.
This contradicts the fact that $r_N(G)\geq 2$.
\end{proof}

\begin{lemma}\label{lem:g_k-under-quot}
Let $G$ be a finite group with $g_k(G)\geq N$ and $k\geq 2$.
Consider the exact sequence
\[
1\rightarrow C \rightarrow G \rightarrow G'\rightarrow 1,
\]
where $C$ is a cyclic group.
Then, $g_{k-1}(G')\geq N$.
\end{lemma}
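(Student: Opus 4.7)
The plan is to verify the two defining conditions of $g_{k-1}(G') \geq N$, namely that $|G'| \geq N$ and $r_N(G') \geq k-1$, each by a contradiction argument using the cyclic subgroup $C$.

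First I would show $|G'| \geq N$. If instead $|G'| < N$, then the kernel $C \leqslant G$ would satisfy $[G:C] = |G'| < N$, so $C$ is a subgroup of $G$ of index at most $N$. Since $C$ is cyclic, $\rank(C) = 1$, which contradicts $r_N(G) \geq k \geq 2$ (here is where the hypothesis $k\geq 2$ is essential).

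Next I would show $r_N(G') \geq k-1$. Suppose there exists a subgroup $H' \leqslant G'$ with $[G':H'] \leq N$ and $\rank(H') \leq k-2$. Let $H \leqslant G$ be the preimage of $H'$ under the surjection $G \to G'$. Then $[G:H] = [G':H'] \leq N$, and $H$ fits into the exact sequence $1 \to C \to H \to H' \to 1$. Choosing one generator for the cyclic group $C$ and lifting at most $k-2$ generators of $H'$, we see that $H$ is generated by at most $1 + (k-2) = k-1$ elements, so $\rank(H) \leq k-1 < k$. This contradicts $r_N(G) \geq k$.

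Combining the two inequalities, $g_{k-1}(G') \geq N$ by definition. No step is really an obstacle here; the only mild subtlety is making sure the cyclicity of $C$ is used exactly once (to add a single generator when lifting), and that the hypothesis $k \geq 2$ is needed so that the subgroup $C$ itself does not already witness a low-rank subgroup of bounded index in $G$.
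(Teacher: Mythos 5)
Your proof is correct and follows essentially the same strategy as the paper: both parts are argued by contradiction, first using the cyclic subgroup $C$ to show $|G'|\geq N$, then using a lift of a hypothetical low-rank, small-index subgroup of $G'$ together with a generator of $C$ to contradict $r_N(G)\geq k$. The only cosmetic difference is that you take $H$ to be the full preimage of $H'$ (making the index computation $[G:H]=[G':H']$ immediate), whereas the paper builds $H$ by adjoining a generator of $C$ to a subgroup $H_0$ surjecting onto $H'$ and then notes that the index is bounded; these constructions produce the same subgroup, so the arguments are equivalent.
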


\begin{proof}
Assume that $|G'|\leq N$.
Then, $C$ is a cyclic group of index bounded by $N$ on $G$.
This contradicts the fact that $r_N(G)\geq 2$.
Hence, we have that $|G'|>N$.
On the other hand, assume that
$r_N(G')< k-1$.
This means that we can find a subgroup
$H'$ of $G'$ generated by strictly less than $k-1$ elements with index at most $N$.
Let $H_0$ be a subgroup of $G$ surjecting onto $H'$.
We may assume that $H_0$ is generated by strictly less than $k-1$ elements as well.
Let $H$ be the subgroup of $G$ generated by the image of the generator of $C$ and $H_0$.
Then, $H$ is generated by strictly less than $k$ elements.
Furthermore, $H$ has index at most $N$ on $G$,
leading to a contradiction with the fact that $r_N(G)\geq k$.
We conclude that $r_N(G')\geq k-1$.
Hence, we have that $g_{k-1}(G')\geq N$.
\end{proof}

\begin{lemma}\label{lem:g2-supgroup}
Let $G$ be a finite group 
and let $H$ be a subgroup.
If $g_2(H)\geq N$, then $g_2(G)\geq N$.
\end{lemma}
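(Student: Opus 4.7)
The plan is to unwind the definition of $g_2$ directly. Recall $g_2(H)\geq N$ means $|H|\geq N$ together with $r_N(H)\geq 2$, and we want to conclude the analogous statements for $G$. The cardinality condition $|G|\geq |H|\geq N$ is immediate from $H\leqslant G$, so the content of the lemma lies entirely in showing $r_N(G)\geq 2$.

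I would argue this by contraposition. Suppose $r_N(G)<2$, i.e.\ $r_N(G)=1$, so there exists a cyclic subgroup $K\leqslant G$ with $[G:K]\leq N$. The key observation is that $K\cap H$ is a \emph{cyclic} subgroup of $H$, since every subgroup of a cyclic group is cyclic. Moreover, via the natural injection $H/(K\cap H)\hookrightarrow G/K$ of coset spaces, we obtain the standard index inequality
\[
[H:K\cap H]\leq [G:K]\leq N.
\]
Thus $K\cap H$ is a cyclic subgroup of $H$ of index at most $N$, which forces $r_N(H)\leq 1$ and contradicts the hypothesis $r_N(H)\geq 2$.

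Combining the two observations, we get $|G|\geq N$ and $r_N(G)\geq 2$, hence $g_2(G)\geq N$. There is no serious obstacle here; the lemma expresses the simple monotonicity principle that a potential low-rank, bounded-index witness in $G$ restricts to a witness of the same type in $H$, so the $2$-generation order can only grow when passing to a supergroup.
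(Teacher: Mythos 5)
Your proof is correct and follows essentially the same route as the paper's: assume for contradiction that $G$ has a cyclic subgroup $K$ of index at most $N$, then intersect with $H$ to get a cyclic subgroup of $H$ of index at most $N$, contradicting $r_N(H)\geq 2$. You are a bit more careful than the paper in spelling out the index inequality $[H:K\cap H]\leq [G:K]$ via the coset injection, but the argument is the same.
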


\begin{proof}
Note that $g_2(H)\geq N$ implies that $|H|\geq N$, so $|G|\geq N$.
We need to check that $r_N(G)\geq 2$.
Assume the opposite holds, i.e, 
there is a cyclic subgroup $\zz_m \subset G$ of index at most $N$.
Then $\zz_m \cap H$ is a cyclic subgroup of $H$ of index at most $N$, contradicting the fact that $r_H(G)\geq 2$.
Hence, we have that $r_N(G)\geq 2$.
Thus, $g_2(G)\geq 2$.
\end{proof}

\begin{lemma}\label{lem:subgrp-glnz}
There exists a constant $l:=l(r,k,J)$, only depending on $r,k$ and $J$, satisfying the following.
Let $G<{\rm Aut}(\mathbb{G}_m^k)$ be a finite subgroup.
Let $A_0 \leqslant G$ be an abelian normal subgroup of rank $r$ with index at most $J$.
Then, there exists an abelian characteristic subgroup 
$A\leqslant G$ of index at most $l$ so that $A<\mathbb{G}_m^k$.
\end{lemma}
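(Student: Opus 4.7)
My plan is to exploit the semidirect product decomposition $\operatorname{Aut}(\mathbb{G}_m^k) \cong \mathbb{G}_m^k \rtimes \operatorname{GL}_k(\mathbb{Z})$ together with Minkowski's classical bound on the order of finite subgroups of $\operatorname{GL}_k(\mathbb{Z})$, plus a standard counting argument for subgroups of bounded index.

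First, I set $T := G \cap \mathbb{G}_m^k$. Since $\mathbb{G}_m^k$ is normal in $\operatorname{Aut}(\mathbb{G}_m^k)$, the subgroup $T$ is normal and abelian in $G$, and $G/T$ embeds into $\operatorname{GL}_k(\mathbb{Z})$. By Minkowski's theorem there is a constant $m(k)$ bounding the order of any finite subgroup of $\operatorname{GL}_k(\mathbb{Z})$, so $[G:T]\leq m(k)$. On the other hand, any finite subgroup of $\mathbb{G}_m^k$ is contained in $(\mathbb{Z}/n\mathbb{Z})^k$ for some $n$, and therefore has rank at most $k$; so $T$ can be generated by at most $k$ elements. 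Lifting generators of $G/T$ and combining with generators of $T$ yields
\[
d(G)\leq d(T)+d(G/T)\leq k+\log_2 m(k)=:d(k).
\]

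The key step is to pass from the normal subgroup $T$ to a characteristic one without leaving $\mathbb{G}_m^k$. I define
\[
A:=\bigcap_{\phi\in \operatorname{Aut}(G)}\phi(T),
\]
which is characteristic in $G$ by construction, contained in $T\subseteq \mathbb{G}_m^k$, and hence abelian. Each $\phi(T)$ is a subgroup of $G$ of index at most $m(k)$. Subgroups of index at most $n$ in a $d$-generated group correspond to stabilizers of a point in transitive $G$-actions on sets of size at most $n$, and are thus controlled by $|\operatorname{Hom}(G,S_n)|\leq (n!)^d$; hence their number is at most $n\cdot (n!)^d$. Specializing to $n=m(k)$ and $d=d(k)$ gives a uniform bound $N(k)$ on the number of distinct subgroups of the form $\phi(T)$. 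Using the injection $G/\bigcap_i H_i\hookrightarrow \prod_i G/H_i$, I obtain
\[
[G:A]\leq m(k)^{N(k)}=:l(k),
\]
which depends only on $k$, hence trivially only on $r,k,J$.

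The main obstacle is ensuring that the characteristic subgroup one produces actually lies in $\mathbb{G}_m^k$. This is why I form the characteristic core of $T=G\cap \mathbb{G}_m^k$ rather than that of, say, $A_0\cap \mathbb{G}_m^k$: the core of $T$ is automatically inside $T$ and therefore inside $\mathbb{G}_m^k$, whereas an arbitrary automorphism of $G$ need not preserve $\mathbb{G}_m^k$ as a subset of the ambient group, and automorphic images of $A_0\cap \mathbb{G}_m^k$ might escape $\mathbb{G}_m^k$. Note that in this approach the hypotheses on $r$ and $J$ are not used; one obtains the stronger conclusion that $l$ depends only on $k$.
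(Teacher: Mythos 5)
Your proof is correct, but it takes a genuinely different route from the paper. The paper also uses the exact sequence $1 \to \mathbb{G}_m^k \to \operatorname{Aut}(\mathbb{G}_m^k) \to \operatorname{GL}_k(\mathbb{Z}) \to 1$, but it then appeals only to the bound $h(k)$ on \emph{orders of finite-order elements} of $\operatorname{GL}_k(\mathbb{Z})$ (not Minkowski's bound on finite subgroups), picks $l$ divisible by $h$ and by $[G:A_0]$, and takes $A := G^l$, the characteristic subgroup generated by $l$-th powers. That $A \subset \mathbb{G}_m^k$ follows because each $g^l$ has trivial image in $\operatorname{GL}_k(\mathbb{Z})$, that $A \subseteq A_0$ (hence $A$ abelian) follows from the choice of $l$, and the index bound $[G:A]\le l^r J$ is obtained via the sandwich $A_0^l \le G^l \le A_0$, using that $A_0$ has rank $r$ and index at most $J$ --- this is exactly where the hypotheses on $r$ and $J$ enter. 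Your argument replaces the power-subgroup construction with the characteristic core of $T = G\cap \mathbb{G}_m^k$: instead of making a single normal subgroup characteristic by raising to a power, you intersect all its automorphic images, and control the index by (a) Minkowski's uniform bound $m(k)$ on finite subgroups of $\operatorname{GL}_k(\mathbb{Z})$, (b) the observation that $G$ is $d(k)$-generated, and (c) the classical bound on the number of subgroups of bounded index in a $d$-generated group. The trade-off is instructive: your construction is conceptually cleaner and yields the genuinely stronger statement that the constant $l$ depends on $k$ alone (which you correctly note, and which the paper's formulation obscures), whereas the paper's power-subgroup $G^l$ gives a far more explicit and tame bound $l^r J$ at the cost of invoking $r$ and $J$. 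Both are valid; yours is a real improvement on the dependence of the constant, at the price of an astronomically larger explicit value of $l(k)$.

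One small remark on rigor, which does not affect correctness: when you bound the number of subgroups of index at most $n$ by $n\cdot(n!)^d$, you should say the count is over subgroups of index \emph{exactly} $m\le n$, each such subgroup being the point-stabilizer of a transitive $G$-action on an $m$-element set, and hence recovered from a homomorphism $G\to S_m$; summing $\sum_{m\le n}(m!)^d \le n(n!)^d$ gives the claim. Also, for completeness, the fact that $d(T)\le k$ uses that a finite subgroup of $(\mathbb{K}^\times)^k$ sits inside $\mu_n^k\cong(\mathbb{Z}/n\mathbb{Z})^k$, whose $p$-torsion is $(\mathbb{Z}/p\mathbb{Z})^k$ for each $p\mid n$.

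Incidentally, in the paper's own proof the set $\{g^l \mid g\in G\}$ should be read as the subgroup it generates, and the exponent $l$ should be chosen so that $[G:A_0]$ divides it (e.g.\ $l=\operatorname{lcm}(h,J!)$ rather than $h!J$ as written), but these are cosmetic slips and do not affect the argument.
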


\begin{proof}
We have an exact sequence
\[
1\rightarrow \mathbb{G}_m^k \rightarrow {\rm Aut}(\mathbb{G}_m^k) \rightarrow 
{\rm Gl}_k(\zz)\rightarrow 1.
\]
There exists a constant $h:=h(k)$, only depending on $k$, such that every finite order element of ${\rm Gl}_k(\zz)$ has order at most $h$.
Set $l:=h!J$.
We have that 
$G^l=\{ g^l \mid g \in G\}$ is a characteristic subgroup of $G$.
By construction, we have that
$G^l < \mathbb{G}_m^k$.
Furthermore, we have that $G^l\leqslant A_0$.
In particular, the group $G^l$ is an abelian characteristic subgroup of $G$.
We need to check that it has bounded index on $G$.
We have subgroups
\[
A_0^l \leqslant G^l \leqslant A_0 \leqslant G.
\]
Note that the index of $A_0^l$ in $A_0$ is at most $l^r$.
We conclude that the index of $A:=G^l$ in $G$ is bounded by $l^rJ$.
\end{proof}

\begin{definition}
{\em 
Let $\mathcal{G}$ be a family of finite groups.
We say that $\mathcal{G}$ is {\em uniformly Jordan} if there is a constant $J(\mathcal{G})$  so that for any group $G\in \mathcal{G}$ there is a normal abelian subgroup $A\leqslant G$ of index at most $J(\mathcal{G})$.
}
\end{definition}

\begin{theorem}{\rm(cf.~\cite{PS16}*{Theorem 1.8})}\label{thm:Jordan}
The family of finite birational automorphism groups of Fano type varieties of dimension $n$ is uniformly Jordan.
\end{theorem}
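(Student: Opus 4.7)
The plan is to follow the strategy of Prokhorov--Shramov \cite{PS16}, which combines the $G$-equivariant minimal model program with Birkar's solution of the BAB conjecture \cite{Bir16b}.

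First, I would promote the birational action to a regular one. Given a finite subgroup $G\leqslant {\rm Bir}(X)$ with $X$ a Fano type variety of dimension $n$, a $G$-equivariant resolution of indeterminacy produces a smooth projective $\tilde X$ on which $G$ acts biregularly, together with a $G$-equivariant birational map $\tilde X \dashrightarrow X$. Averaging the boundary that witnesses the Fano type condition (in the spirit of Proposition~\ref{prop:q-compl}) yields a $G$-invariant klt pair $(\tilde X,\tilde\Delta)$ with $-(K_{\tilde X}+\tilde\Delta)$ big, so $\tilde X$ is itself of Fano type.

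Second, I would run a $G$-equivariant MMP on $-(K_{\tilde X}+\tilde\Delta)$ to reach a Fano model carrying a regular $G$-action. Since $\tilde X$ is of Fano type it is a Mori dream space and this MMP terminates $G$-equivariantly, producing a $G$-invariant log Fano pair $(Y,\Delta_Y)$ with $G\leqslant {\rm Aut}(Y,\Delta_Y)$. By perturbing $\Delta_Y$ slightly, or by replacing it with a tame $G$-invariant bounded complement supplied by Theorem~\ref{thm:g-inv-compl}, I can further arrange that $(Y,\Delta_Y)$ is $\epsilon$-lc for a universal $\epsilon=\epsilon(n)>0$.

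Third, I would invoke Birkar's BAB theorem \cite{Bir16b}, which places such $\epsilon$-lc Fano pairs in a bounded family depending only on $n$. A uniform multiple of $-K_Y$ then embeds $Y$ into a fixed $\pp^{N-1}$, so ${\rm Aut}(Y)$ embeds into ${\rm PGL}_N(\kk)$ for some $N=N(n)$. The classical Jordan theorem for ${\rm GL}_N(\kk)$ yields an abelian normal subgroup $A\leqslant G$ of index bounded in terms of $N$ alone, hence only in terms of $n$.

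The main obstacle is the boundedness step: a Fano type variety is not automatically $\epsilon$-lc Fano, and the averaged $G$-invariant boundary produced in the first two steps need not be tame enough to lie in a bounded family. The right perturbation must both survive the equivariant MMP and give an $\epsilon$-lc Fano pair; this is precisely where the effective theory of $G$-invariant complements (Theorem~\ref{thm:g-inv-compl}, based on \cite{Bir16a}) enters, by forcing the singularities of the averaged pair into a bounded range. Modulo this, the rest of the argument is purely group-theoretic via Jordan for $\mathrm{GL}_N$.
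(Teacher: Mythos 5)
The paper does not prove this statement; it is quoted verbatim from Prokhorov--Shramov \cite{PS16}*{Theorem 1.8}, and the author uses it as a black box. So what is at stake is whether your reconstruction of the Prokhorov--Shramov argument is sound, and there it has a genuine gap.

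The gap is in your boundedness step. You propose to run a $G$-equivariant MMP for $-(K_{\tilde X}+\tilde\Delta)$ and then make the resulting $G$-Fano pair $\epsilon$-lc ``by perturbing $\Delta_Y$ slightly, or by replacing it with a tame $G$-invariant bounded complement.'' Neither mechanism works. An $M$-complement $(Y,B)$ is merely \emph{log canonical}: it typically has log discrepancy exactly zero along some divisor, i.e., it is as far from $\epsilon$-lc as a pair can be. Complement theory (Theorem~\ref{thm:g-inv-compl}) controls the Cartier index of $K+B$, not the minimal log discrepancy; it cannot convert a badly singular klt Fano into one lying in a bounded family. And no small perturbation of the boundary fixes this either: there are klt Fano surfaces of Picard rank one (cones over $\mathbb{P}^1$ under $\mathcal{O}(-n)$) whose minimal log discrepancy tends to zero, so the underlying varieties are unbounded, regardless of the boundary chosen. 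Since $-(K_Y+\Delta_Y)$-negativity of contracted curves says nothing about the discrepancies of $K_Y$ alone, the MMP you run does not protect you from landing on such examples.

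The correct route, which is the one taken in \cite{PS16}, is to run the \emph{ordinary} $G$-equivariant $K_{\tilde X}$-MMP on a smooth (hence terminal) $G$-model $\tilde X$. Terminal singularities are preserved by the $K$-MMP, and since $\tilde X$ is rationally connected the program ends with a $G$-Mori fiber space $Y\to Z$ with $Y$ terminal. When $Z$ is a point, $Y$ is a terminal Fano of dimension $n$, and boundedness of terminal Fanos (a consequence of Birkar's BAB theorem) gives a uniform $N$ with $-NK_Y$ very ample, hence ${\rm Aut}(Y)\hookrightarrow {\rm PGL}_{N'}$ for a uniform $N'$; the classical Jordan theorem for linear groups finishes. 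When $\dim Z\geq 1$, one must combine the Jordan property on the lower-dimensional rationally connected base (by induction) with control of the fiber-wise kernel acting on the generic fiber, which is the genuinely delicate part of \cite{PS16} and is not addressed in your sketch. Replacing your $\epsilon$-lc/complement mechanism with ``terminal is preserved by $K$-MMP'' plus this Mori-fiber-space induction would repair the argument.
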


\subsection{Del Pezzo surfaces with large automorphisms}

In this subsection, we prove some propositions about del Pezzo surfaces with large automorphisms group.

\begin{lemma}\label{lem:ab-action-p1}
Let $M$ be a positive integer.
There exists a positive integer $N:=N(M)$, 
only depending on $M$,
satisfying the following.
Let $A< {\rm Aut}(\pp^1)$ be a finite abelian group.
Let $B$ be a $A$-invariant $M$-complement of $\pp^1$.
If $|A|\geq N$, then up to an automorphism of $\pp^1$, we have that
$B$ is the toric boundary, and
$A\simeq \zz_{|A|}$ acts as the multiplication of a $|A|$-root of unity on the torus.
\end{lemma}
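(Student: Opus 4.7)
The plan is to exploit the classification of finite subgroups of $\operatorname{Aut}(\mathbb{P}^1)=\operatorname{PGL}_2(\mathbb{K})$, reduce to the cyclic case, normalize the two fixed points to $0$ and $\infty$, and then use the degree constraint $\deg B=2$ together with $M$-divisibility of the coefficients to rule out any orbit other than the fixed points.

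First I would recall that the finite subgroups of $\operatorname{PGL}_2(\mathbb{K})$ are (conjugate to) cyclic groups, dihedral groups, and the three polyhedral groups $A_4, S_4, A_5$. Among these, the abelian ones are cyclic groups and the Klein four-group. In particular, any finite abelian subgroup of $\operatorname{Aut}(\mathbb{P}^1)$ of order $\geq 5$ is cyclic. So, choosing $N\geq 5$, we may assume $A\simeq \mathbb{Z}_{|A|}$. Every finite-order element of $\operatorname{PGL}_2(\mathbb{K})$ is diagonalizable, hence has exactly two fixed points on $\mathbb{P}^1$; since $A$ is cyclic generated by such an element, the set of $A$-fixed points has cardinality $2$. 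After conjugating by an automorphism of $\mathbb{P}^1$, I may assume these are $0$ and $\infty$, and then a generator of $A$ acts as $z\mapsto \zeta z$ for a primitive $|A|$-th root of unity $\zeta$, as desired.

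Next I would analyze the $A$-invariant $M$-complement $B$. Since $M(K_{\mathbb{P}^1}+B)\sim 0$, the coefficients of $B$ lie in $\frac{1}{M}\mathbb{Z}\cap[0,1]$, and $\deg B=2$. The $A$-orbits on $\mathbb{P}^1$ are the two fixed points $\{0\}, \{\infty\}$ and a family of free orbits, each of size $|A|$. By $A$-invariance, $B$ assigns the same coefficient to every point of a given orbit. If a free orbit appears in $B$ with coefficient $c>0$, then $c\geq \tfrac{1}{M}$ and its contribution to $\deg B$ is $|A|c\geq |A|/M$. Thus, choosing $N>2M$, any $|A|\geq N$ forces this contribution to exceed $2$, which is impossible. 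Hence no free orbit can appear, so $\operatorname{supp}(B)\subset\{0,\infty\}$.

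Finally, writing $B=a[0]+b[\infty]$ with $a,b\in\frac{1}{M}\mathbb{Z}\cap[0,1]$ and $a+b=2$, we must have $a=b=1$, so $B=[0]+[\infty]$ is the toric boundary. Setting $N:=\max\{5,\,2M+1\}$ yields the claim. No step presents a real obstacle; the only care needed is in bounding the contribution of free orbits, which is what forces the condition $N>2M$.
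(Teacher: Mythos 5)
Your proof is correct and follows the same overall strategy as the paper — reduce to the cyclic case via the classification of finite subgroups of $\operatorname{PGL}_2$, normalize the fixed points to $0,\infty$, and use the degree-$2$ constraint on $B$ — but you make explicit the key step that the paper's one-line proof glosses over. The paper merely asserts that since $A$ "ramifies with index $|A|$ at zero and infinity, $B$ must be the sum of those divisors with reduced structure," whereas you correctly observe that the real reason is that a free $A$-orbit appearing in $B$ with coefficient at least $1/M$ would contribute at least $|A|/M$ to $\deg B = 2$, which is impossible once $|A| > 2M$; and that the remaining two coefficients $a,b\in[0,1]$ with $a+b=2$ are forced to equal $1$. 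Your explicit bound $N:=\max\{5,\,2M+1\}$ also makes clear why $N$ must depend on $M$, which is consistent with the statement but not visible in the paper's argument. This is a case where the blind proof is more rigorous than the original.
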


\begin{proof}
If $|A|>6$, then $A$ is a cyclic group acting as the multiplication by a root of unity on a big torus of $\pp^1$.
Since $A$ ramifies with index $|A|$ at zero and infinity, we conclude that $B$ must the sum of those divisors with reduced structure.
\end{proof}

\begin{lemma}\label{lem:ab-action-p2}
Let $M$ be a positive integer.
There exists a positive integer $N:=N(M)$,
only depending on $M$,
satisfying the following.
Let $A< {\rm Aut}(\pp^2)$ be a finite abelian group.
Let $B$ be a $A$-invariant $M$-complement on $\pp^2$.
If $g_2(A)\geq N$, then up to an automorphism of $\pp^2$, we have that $B$ is the toric boundary, 
$A\simeq \zz_{n_1}\oplus \zz_{n_2}$
acts as the multiplication by roots of unity on the torus,
and $\min\{n_1,n_2\}\geq N$.
\end{lemma}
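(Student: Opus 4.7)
The plan is to combine the structural constraints imposed by $g_2(A)\geq N$ with the classification of finite abelian subgroups of $\mathrm{Aut}(\pp^2)=\mathrm{PGL}_3$ and an equivariant analysis of anticanonical log canonical divisors on $\pp^2$. From $g_2(A)\geq N$ I first extract that $|A|\geq N$ and that $A$ admits no cyclic subgroup of index at most $N$; in particular, the rank of $A$ is at least two. Once $A$ is shown to be diagonalizable in $\mathrm{PGL}_3$, it is contained in the maximal torus $\mathbb{G}_m^2$, hence its rank is exactly two. Writing $A\simeq \zz_{n_1}\oplus\zz_{n_2}$ and applying Lemma~\ref{lem:cyclic-2-gen} to the split exact sequence $1\to \zz_{n_1}\to A\to \zz_{n_2}\to 1$ then yields $\min\{n_1,n_2\}\geq N$.

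The step I expect to be the main obstacle is showing that any such $A$ is conjugate into the diagonal torus. By the classical classification of projective representations, a non-diagonalizable finite abelian subgroup of $\mathrm{PGL}_3$ arises as the projective image of a non-abelian lift to $\mathrm{GL}_3$ coming from a non-trivial Schur cover. Decomposing the standard $\mathrm{GL}_3$-representation of such a lift leaves two cases: in the irreducible case a Heisenberg-type analysis forces $A\simeq \zz_3\oplus\zz_3$ of order $9$ with $g_2(A)=2$; in the reducible $1+2$ case the only candidate two-dimensional factor is the Pauli representation of the Klein four group, and a direct matrix computation shows its two generators fail to commute in any $\mathrm{GL}_3$-lift, so no abelian $\mathrm{PGL}_3$-subgroup of this shape arises. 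Consequently, for $N$ greater than an absolute constant, $A$ must be diagonalizable in $\mathrm{PGL}_3$.

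Once $A\leqslant \mathbb{G}_m^2$ acts on homogeneous coordinates by $(x:y:z)\mapsto (\zeta_{n_1}x:\zeta_{n_2}y:z)$, I would classify the $A$-invariant effective divisors $B$ with $K_{\pp^2}+B\sim_\qq 0$ and $(\pp^2,B)$ log canonical. Any such $B$ has degree three, and each irreducible component of $B$ is either $A$-invariant or belongs to a non-trivial $A$-orbit. The $A$-character of the monomial $x^iy^jz^k$ is determined by $(i,j)$ read in $\zz/n_1\oplus \zz/n_2$, and for $\min\{n_1,n_2\}\geq 4$ two distinct degree-three monomials carry distinct $A$-characters because their exponent difference lies in the box $\{(i,j):|i|,|j|\leq 3\}$, which meets the kernel $n_1\zz\oplus n_2\zz$ only at the origin. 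Hence every $A$-semi-invariant polynomial of degree at most three is a scalar multiple of a monomial, so the only $A$-invariant irreducible curves of degree at most three are the three coordinate lines; moreover no non-trivial $A$-orbit of irreducible curves fits within total degree three. The support of $B$ is therefore contained in the three coordinate lines, and writing $B=a\{x=0\}+b\{y=0\}+c\{z=0\}$ with $a+b+c=3$ and each coefficient at most one forces $a=b=c=1$, so $B$ is the toric boundary and the conclusion follows.
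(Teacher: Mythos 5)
Your overall strategy is the same as the paper's: show $A$ is diagonalizable (hence lies in the torus), and show that $A$-invariant curves of bounded degree must be coordinate lines. The paper states both facts briefly; you fill in the details with a classification of abelian subgroups of $\mathrm{PGL}_3$ and a character computation. Both of those ideas are sound. However, there is a genuine quantitative gap, and two smaller issues.

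The main problem is that your degree bound is off by a factor of $M$. Since $B$ is an $M$-complement, its coefficients lie in $\frac{1}{M}\zz_{>0}$, so an irreducible component $C\subset\Supp(B)$ of coefficient $c\geq\frac{1}{M}$ can have degree up to $3M$, not $3$, subject only to $c\cdot\deg C\leq\deg B=3$. Your character argument rules out $A$-invariant curves of degree at most $3$, requiring $\min\{n_1,n_2\}\geq 4$, but it needs to rule out $A$-invariant irreducible curves of degree up to $3M$, which requires $\min\{n_1,n_2\}>3M$. As written, the threshold $N(M)$ in your proof never depends on $M$, so for $M\geq 2$ the argument does not close. The fix is direct (replace the box $\{|i|,|j|\leq 3\}$ by $\{|i|,|j|\leq 3M\}$ and take $N\geq 3M+1$, consistent with the paper's choice $N\geq 4M$), but it is a real gap in what you wrote.

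Second, the clause ``no non-trivial $A$-orbit of irreducible curves fits within total degree three'' is asserted with no supporting argument. If $C_0$ is a component of $\Supp(B)$ that is not $A$-invariant, its $A$-orbit is a reduced $A$-invariant curve $C$ of degree $k\deg C_0$ where $k$ is the orbit length, and one either needs to run the same character argument on the stabilizer $A_0\leqslant A$ of $C_0$ (which has index $k\leq 3M$, hence $g_2(A_0)\geq N/(3M)$ by Lemma~\ref{lem:k-gen-subgroup}, so still lands in the torus and forces $C_0$ to be a coordinate line — a contradiction), or, more simply and as the paper implicitly does, apply the degree bound directly to the reduced $A$-invariant curve $C$ itself: if $C$ is not torus-invariant then $\deg C\geq N$, hence $3=\deg B\geq\frac{1}{M}\deg C\geq N/M$, impossible for $N>3M$. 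Either way, some argument is needed and none is given.

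Finally, your discussion of the reducible $1+2$ lift is incorrect as stated: the projective images of the Pauli matrices in $\mathrm{PGL}_2$ \emph{do} commute (the matrices anti-commute, so they commute modulo scalars), giving a genuine non-diagonalizable $\zz_2\oplus\zz_2<\mathrm{PGL}_3$; one can even enlarge it by a cyclic scalar factor. So ``no abelian $\mathrm{PGL}_3$-subgroup of this shape arises'' is false. The conclusion you want is still true — any non-diagonalizable abelian subgroup of $\mathrm{PGL}_3$ has a cyclic subgroup of bounded index (the radical of the commutator pairing), hence bounded $g_2$ — but the reasoning you give does not establish it.
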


\begin{proof}
Note that $g_2(A)\geq N$ implies that $|A|\geq N$.
Hence, for $N$ large enough, we may assume that $A$ acts as the multiplication by roots of unity on the big torus of $\pp^2$.
In particular, it has rank two.
Let $C\subset \pp^2$ be a reduced curve.
If $C$ is $A$-invariant but is not $\mathbb{G}_m^2$-invariant, then
$C$ has degree at least $N$.
If $C$ is contained in the support of $B$, then $C$ appears with coefficient at least $\frac{1}{M}$.
Thus, we conclude that for $N\geq 4M$, the support of $B$ is $\mathbb{G}_m^2$-invariant.
Hence, it must be the reduced torus invariant boundary.
The fact that $\min\{n_1,n_2\}\geq N$ follows from Lemma~\ref{lem:cyclic-2-gen}.
\end{proof}

\begin{proposition}\label{prop:g-inv-del-pezzo}
Let $M$ be a positive integer.
There exists a positive integer $N:=N(M)$,
only depending on $M$, 
satisfying the following.
Let $X$ be a del Pezzo surface
and $G\leqslant {\rm Aut}(X)$ a finite subgroup with 
$g_2(G)\geq N$.
Let $B$ be an effective divisor on $X$
satisfying the following conditions:
\begin{enumerate}
\item $(X,B)$ is $G$-invariant
\item $(X,B)$ is log canonical, and 
\item $M(K_X+B)\sim $0.
\end{enumerate}
Then $(X,B)$ is a log Calabi--Yau toric pair.
Furthermore, there exists an abelian characteristic subgroup $A\leqslant G$ with
index bounded by $N$, such that
$A< \mathbb{G}^2_m \leqslant {\rm Aut}(X,B)$.
\end{proposition}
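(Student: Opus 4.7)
The strategy is to show that $(X,B)$ admits a faithful $\mathbb{G}_m^2$-action by producing a two-dimensional torus inside the identity component ${\rm Aut}^\circ(X,B)$; once such a torus is located, toricity of $X$, the identification of $B$ with the reduced toric boundary, and the extraction of the characteristic abelian subgroup all follow from the lemmas already established. I start by applying Theorem~\ref{thm:Jordan} to $G\leqslant{\rm Aut}(X)$, producing a normal abelian subgroup $A_1\leqslant G$ of index at most $J=J(2)$. Lemma~\ref{lem:k-gen-subgroup} then gives $g_2(A_1)\geq N/J$; in particular $|A_1|\geq N/J$ and no subgroup of $A_1$ of index at most $N/J$ is cyclic.

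Since $-K_X$ is ample, $H:={\rm Aut}(X,B)$ is a closed subgroup of the linear algebraic group ${\rm Aut}(X)$. The pairs $(X,B)$ satisfying the hypotheses of the proposition form a bounded family (Alexeev's BAB together with the boundedness of $M$-complements, Theorem~\ref{thm:g-inv-compl}), so the order of the component group $\pi_0(H)$ is bounded by a constant $J_0=J_0(M)$. Writing $A_1^\circ:=A_1\cap H^\circ$, one gets $g_2(A_1^\circ)\geq N/(JJ_0)$. Applying Jordan's theorem for finite subgroups of a connected linear algebraic group in characteristic zero, I obtain an abelian subgroup $A_2\leqslant A_1^\circ$ of bounded index contained in some maximal torus $T\leqslant H^\circ$. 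If $\dim T\leq 1$, then $A_2$ is cyclic of bounded index inside $A_1$, contradicting $r_{N/J}(A_1)\geq 2$ as soon as $N$ is large. Hence $\dim T\geq 2$ and we get an embedding $\mathbb{G}_m^2\hookrightarrow H^\circ\leqslant{\rm Aut}(X,B)$.

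The faithful $\mathbb{G}_m^2$-action on the surface $X$ forces the generic orbit to be two-dimensional, so $X$ admits an open $\mathbb{G}_m^2$-orbit and is toric. Any $\mathbb{G}_m^2$-invariant effective divisor on $X$ is supported on the reduced toric boundary $D=\sum_i D_i$, and combining $-K_X=D$, $K_X+B\sim_\qq 0$, and the log canonicity of $B$ forces $B=D$. Thus $(X,B)$ is a log Calabi--Yau toric pair. To finish, I apply Lemma~\ref{lem:subgrp-glnz} with $r=k=2$ and the Jordan bound to extract an abelian characteristic subgroup $A\leqslant G$ of index at most $N$ with $A<\mathbb{G}_m^2\leqslant{\rm Aut}(X,B)$.

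The technical heart of the argument is the uniform bound on $|\pi_0({\rm Aut}(X,B))|$, which requires combining Alexeev's BAB with the boundedness of $M$-complements; everything else flows formally from there. An alternative that sidesteps this issue is to run the $A_1$-equivariant MMP on $(X,B)$ and reduce to $(\pp^2,B')$ or to a $\pp^1$-bundle over $\pp^1$, apply Lemmas~\ref{lem:ab-action-p2} and~\ref{lem:ab-action-p1} (the latter via Proposition~\ref{prop:G-equiv-cbf}) to identify the output as toric, and then transfer toricity back to $X$ via Shokurov's characterization of locally toric morphisms.
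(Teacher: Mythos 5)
Your main argument takes a genuinely different route from the paper's, and it contains two claims that are stated but not justified, both of which are substantive and outside the toolbox the paper assembles. First, you assert that the pairs $(X,B)$ form a bounded family and conclude that $|\pi_0(\operatorname{Aut}(X,B))|$ is bounded by a constant $J_0(M)$. Boundedness of the family is fine (the $X$'s are among finitely many del Pezzo surfaces and the coefficients and degree of $B$ are controlled by $M$), but the step from ``bounded family'' to ``uniformly bounded component group of the relative automorphism group scheme'' is a nontrivial constructibility/Noetherian argument that you neither prove nor cite. Second, you invoke ``Jordan's theorem for finite subgroups of a connected linear algebraic group'' to place a bounded-index subgroup of $A_1^\circ$ inside a maximal torus $T$ of $H^\circ$. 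Jordan's theorem produces an abelian normal subgroup of bounded index, but $A_1^\circ$ is already abelian; what you actually need is a statement about \emph{toral} finite abelian subgroups. That statement is true with a bound depending only on the (bounded) root datum of $H^\circ$, but it is not Jordan's theorem and requires a separate argument (recall $(\mathbb{Z}/2)^2\subset PGL_2(\mathbb{K})$ is not contained in any maximal torus, so one really does need to pass to a bounded-index subgroup and justify the bound). You flag the first point as ``the technical heart,'' which is honest, but leaving it as an assertion is a genuine gap, and the second point is glossed over entirely.

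The paper avoids both issues by a more elementary, explicit route: since $g_2(G)\geq N$ forces $\operatorname{Aut}(X)$ to be infinite, $X$ is one of the finitely many del Pezzo surfaces obtained by blowing up $\pp^2$ at a few points; these carry a bounded number of $(-1)$-curves permuted by $G$, so a bounded-index subgroup $A_1$ fixes all of them and descends to $(\pp^2,B_{\pp^2})$. Lemma~\ref{lem:ab-action-p2} then identifies $(\pp^2,B_{\pp^2})$ as the standard toric pair with $A_1$ acting through the torus, the blown-up points are forced to be torus-fixed, toricity lifts to $(X,B)$, and Lemma~\ref{lem:subgrp-glnz} extracts the characteristic abelian subgroup. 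Your alternative sketch at the end (equivariant MMP to $\pp^2$ or a $\pp^1$-bundle, then Lemmas~\ref{lem:ab-action-p1} and~\ref{lem:ab-action-p2}) is closer in spirit to this and would be easier to make rigorous than your main argument, though the paper does not run MMP here at all; it exploits the explicit classification directly.
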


\begin{proof}
By the Jordan property, we may find an abelian normal subgroup $A_0 \leqslant G$ of index at most $J(2)$.
Here, $J:=J(2)$ is a constant which only depends on the dimension.
By Lemma~\ref{lem:k-gen-subgroup}, 
we know that $g_2(A_0)\geq g_2(G)/J$.

From the classification of automorphisms of del Pezzo surfaces (see, e.g.,~\cites{Dol12,Hos96, MH74}), 
we know that $X$ must be the blow-up of $\pp^2$ at up to four points
with no three of them collinear.
Otherwise, the automorphism group of $X$ is finite.
These surfaces have a bounded number of $(-1)$-curves and every automorphism will act as a permutation group of such curves.
Let $S$ be the set of such curves and $k:=|S|$.
Hence, we have an exact sequence
\[
1\rightarrow A_1 \rightarrow A_0 \rightarrow S_k.
\]
By Lemma~\ref{lem:k-gen-subgroup}, we have that $g_2(A_1)\geq g_2(G)/Jk!$.
Note that $A_1$ fixes all the curves of the set $S$.
Hence, $A_1$ acts as an automorphism group of $(\pp^2,B_{\pp^2})$, where $B_{\pp^2}$ is the push-forward of $B$ to $\pp^2$.
Furthermore, $A_1$ leaves the images of the $(-1)$-curves which are contracted to $\pp^2$ fixed.
By Lemma~\ref{lem:ab-action-p2},
up to an automorphism of $\pp^2$, we may assume that $A_1$ is acting as the multiplication by roots of unity on the big torus $\mathbb{G}_m^2\subset \pp^2$.
In particular, $A_1$ has rank two.
By Lemma~\ref{lem:cyclic-2-gen}, we conclude that $A_1\simeq \zz_{n_1}\oplus\zz_{n_2}$, where $\min\{n_1,n_2\}\geq g_2(G)/Jk!$.
For $g_2(G)$ large enough, this implies that $(\pp^2,B_{\pp^2})$ is a log Calabi--Yau toric pair.
We know that the exceptional curves of $X\rightarrow \pp^2$ must map to $A_1$-invariant points.
Thus, they must map to the torus invariant points.
Hence, $(X,B)$ is a log Calabi--Yau toric pair as well.
We conclude that the pair $(X,B)$ is a log Calabu--Yau toric pair provided that
$g_2(G)\geq 4MJk!$.
Finally, observe that $G$ fixes the toric boundary $B$, 
hence it induces an automorphism of the torus $\mathbb{G}_m^2$.
By Lemma~\ref{lem:subgrp-glnz}, we conclude that there exists an abelian characteristic subgroup $A\leqslant G$ with index bounded by $l:=l(2,Jk!)$, such that $A<\mathbb{G}_m^2$.
Thus, it suffices to pick $N\geq \max\{ l,4MJk!\}$.
Note that this value only depends on $M$.
\end{proof}

\begin{proposition}\label{prop:cyclic-inv-del-pezzo}
Let $M$ be a positive integer.
There exists a positive integer $N:=N(M)$,
only depending on $M$, satisfying the following.
Let $X$ be a del Pezzo surface and $G\leqslant {\rm Aut}(X)$ a finite sugroup with
$G\simeq \zz_m$ and $m\geq N$.
Let $B$ be an effective divisor on $X$ satisfying the following conditions:
\begin{enumerate}
    \item $(X,B)$ is $G$-invariant
    \item $(X,B)$ is log canonical, and
    \item $M(K_X+B)\sim 0$.
\end{enumerate}
Then $X$ admits a $\mathbb{G}_m$-action so that
$(X,B)$ is $\mathbb{G}_m$-invariant.
Furthermore, there is a subgroup $A\leqslant G$
with index bounded by $N$, such that 
$A\leqslant \mathbb{G}_m \leq {\rm Aut}(X,B)$.
\end{proposition}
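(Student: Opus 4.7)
The plan is to follow the same blueprint as the proof of Proposition~\ref{prop:g-inv-del-pezzo}, but replacing the rank-two toric conclusion with a rank-one torus action. Since $G = \zz_m$ is already abelian, the initial appeal to Jordan's property is vacuous, and we can take $A_0 = G$ at the outset. For $m$ sufficiently large, $\mathrm{Aut}(X)$ must be infinite (since large finite automorphism groups of del Pezzo surfaces are bounded), so by classification $X$ is one of $\pp^2$, $\pp^1\times\pp^1$, a Hirzebruch surface $\mathbb{F}_n$, or the blow-up of $\pp^2$ at up to four points in general position. In each case $X$ carries at most $k$ \mbox{$(-1)$-curves} for a universal $k$, and $G$ permutes this finite set, giving an exact sequence $1 \to A_1 \to G \to S_k$. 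Then $A_1$ is cyclic of order at least $m/k!$.

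Next, since $A_1$ fixes every $(-1)$-curve on $X$, it descends to an automorphism of a minimal model $Y$ (one of $\pp^2$, $\pp^1\times\pp^1$, or $\mathbb{F}_n$) and also fixes the finite set of points that are images of the contracted $(-1)$-curves. Let $B_Y$ denote the pushforward of $B$ to $Y$; then $(Y,B_Y)$ is $A_1$-invariant with $M(K_Y+B_Y)\sim 0$, so every prime component of $B_Y$ has bounded degree. Any cyclic subgroup of $\mathrm{Aut}(Y)$ is diagonalizable, so we may assume, after conjugation, that $A_1$ is contained in a fixed maximal torus $T \simeq \mathbb{G}_m^2 \subset \mathrm{Aut}(Y)$.

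The heart of the argument is to upgrade $A_1$-invariance to $\mathbb{G}_m$-invariance. For this, one shows that for $m\gg 0$, any prime component $C$ of $B_Y$ that is invariant under the cyclic subgroup $A_1 \subset T$ of large order must in fact be invariant under some 1-parameter subgroup $\mathbb{G}_m^{(C)} \subset T$ containing $A_1$. This follows from a weight analysis: components of $B_Y$ lie in linear systems of degree bounded by $M$, and for $m$ large enough the condition that the characters of $T$ occurring in the defining equation of $C$ have $A_1$-weight zero forces them to lie in the kernel of a genuine character of $T$, producing the sought 1-parameter subgroup. Moreover, since two distinct 1-parameter subgroups of $T$ intersect in a finite group whose order is bounded by the "slope resultant", requiring both to contain $A_1$ for $|A_1|$ sufficiently large forces them to coincide. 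Hence there is a single $\mathbb{G}_m \subset T$, containing $A_1$, stabilizing every prime component of $B_Y$, and therefore preserving the pair $(Y,B_Y)$.

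Finally, the $A_1$-fixed points that are images of contracted $(-1)$-curves on $X$ are, by the same bounded-orbit argument applied to points rather than curves, fixed by $\mathbb{G}_m$ once $m$ is large. Consequently the $\mathbb{G}_m$-action on $Y$ lifts along $X \to Y$ to a $\mathbb{G}_m$-action on $X$ leaving $(X,B)$ invariant, and setting $A := A_1$ gives $A \leqslant G$ of index at most $k!$ with $A \subset \mathbb{G}_m \leqslant \mathrm{Aut}(X,B)$. Choosing $N$ larger than $k!$ times the threshold from the weight analysis yields the proposition. The main obstacle is the weight analysis in the third paragraph: it is here that one must invoke Shokurov's characterization of locally toric surface morphisms from~\cite{Sho00}*{Theorem 6.4} (or a complexity-type argument) to rigorously certify that large cyclic invariance of a bounded-degree divisor in a torus-equipped surface implies 1-parameter subgroup invariance.
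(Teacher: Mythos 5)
Your proposal follows essentially the same route as the paper's proof: pass to the kernel $A_1$ of the permutation action on the bounded set of $(-1)$-curves, descend $A_1$ to a minimal model carrying a maximal torus $T$, upgrade $A_1$-invariance of the pushed-forward boundary to $\mathbb{G}_m$-invariance by a weight argument, and lift the torus action back to $X$.

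The one genuine misstep is your closing assertion that the weight analysis must be certified by Shokurov's characterization of locally toric surface morphisms (\cite{Sho00}*{Theorem 6.4}) or a complexity argument. Neither is invoked at this point in the paper, and neither applies: those tools concern log Calabi--Yau pairs of complexity zero fibered over a curve, whereas the step here is elementary lattice-point arithmetic. A prime component $C$ of $B_{\pp^2}$ of degree $d\le 3M$ invariant under $A_1\le T$ has defining polynomial whose exponent vectors all share the same $A_1$-weight, so their pairwise differences lie in $K\cap[-d,d]^2$, where $K\subset\zz^2$ is the kernel of the weight character $\chi_{A_1}\colon\zz^2\to\zz_{|A_1|}$ and $[\zz^2:K]=|A_1|$ (by faithfulness of the $T$-action). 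Once $|A_1|>2(3M)^2$ this bounded piece of $K$ lies on a single line, whose primitive normal direction determines the one-parameter subgroup stabilizing every component. Two further small points: (a) this $\mathbb{G}_m$ may contain only a bounded-index subgroup of $A_1$ rather than $A_1$ itself — for instance if $A_1=\zz_{2\ell}$ acts on $\pp^2$ with weights $(0,1,\ell+1)$ and $C=\{y^2+z^2=0\}$, the only admissible $\mathbb{G}_m$ is the diagonal torus, which meets $A_1$ in index two; the paper's write-up glosses over the same subtlety, but this suffices since the proposition only asks for a bounded-index subgroup of $G$. (b) $\mathbb{F}_n$ is not a del Pezzo surface for $n\ge 2$, so that item in your classification list is vacuous; your inclusion of $\pp^1\times\pp^1$ is correct, and is in fact a case the paper's ``blow-up of $\pp^2$'' phrasing elides.
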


\begin{proof}
From the classification of automorphisms of del Pezzo surfaces (see, e.g.,~\cites{Dol12,Hos96, MH74}), 
we know that $X$ must be the blow-up of $\pp^2$ at up to four points
with no three of them collinear.
Otherwise, the automorphism group of $X$ is finite.
These surfaces have a bounded number of $(-1)$-curves and every automorphism will act as a permutation group of such curves.
Let $S$ be the set of such curves and $k:=|S|$.
Hence, we have an exact sequence
\[
1\rightarrow A \rightarrow G \rightarrow S_k.
\]
Note that $A$ fixes all the curves of the set $S$.
Hence, $A$ acts as an automorphism group of $(\pp^2,B_{\pp^2})$, where $B_{\pp^2}$ is the push-forward of $B$ to $\pp^2$.
Furthermore, $A$ leaves the images of the $(-1)$-curves which are contracted to $\pp^2$ fixed.
Up to an isomorphism of $\pp^2$, we may assume that $A$ is acting as the multiplication by a root of unity on 
a one-dimensional torus of the big torus $\mathbb{G}_m^2\subset \pp^2$.
Note that $|A|\geq m/k!$.

We claim that $B_{\pp^2}$ is $\mathbb{G}_m$-invariant for such one-dimensional torus.
Indeed, any curves $C\subset \pp^2$ which is $A$-invariant but is not $\mathbb{G}_m$-invariant has degree at least $m/k!$.
Since $C$ appears on $B_{\pp^2}$ with coefficient at least $\frac{1}{M}$, we conclude that for $m\geq 4Mk!$, the boundary $B_{\pp^2}$ is $\mathbb{G}_m$-invariant.
Note that the $A$-action leaves a point $p\in \pp^2$ and a line $L\subset \pp^2$ invariant.
Hence, the centers of $X\rightarrow \pp^2$ must be contained in $L\cup\{p\}$.
Furthermore, $L\cup \{p\}$ is also $\mathbb{G}_m$-invariant.
We conclude that the $\mathbb{G}_m$-action lifts to $X$ in such a way $(X,B)$ is $\mathbb{G}_m$-invariant.
Hence, it suffices to pick $N\geq 4Mk!$.
\end{proof}

\subsection{Locally toric surface morphisms}

In this subsection, we recall an invariant
called the complexity. 
We also recall a characterization of toric morphisms using the complexity. 
First, we will introduce the complexity of a log pair with respect to a contraction.
In~\cite{BMSZ18}, the authors introduce a more general version of the complexity.
For our purposes, the following definition suffices.

\begin{definition}{\em
A {\em decomposition} of an effective divisor $B$ is an expression of the form $B:=\sum_{i=1}^k b_i B_i$, where the $B_i$'s are effective Weil divisors and the $b_i$'s are non-negative real numbers. We denote by $|\Sigma|:=\sum_{i=1}^k b_i$ the {\em norm} of the decomposition.
Each effective divisor $B$ on an algebraic variety $X$ has a natural decomposition given by the prime components of the support of $B$.

Let $(X,B)$ be a log canonical pair and
$X\rightarrow C$ be a contraction.
Let $\Sigma$ be a decomposition of $B$.
Let $c\in C$ be a closed point.
The {\em complexity} of $(X,B)$ over $c$, denoted by
$\gamma_c(X,B)$, is defined to be
\[
\dim(X)+\rho_c(X/C)-|\Sigma|.
\]
Here, $\rho_c(X/C)$ is the relative Picard rank of $X$ over $c\in C$.
The {\em local complexity} of a log pair $(X,B)$ at a closed point $x\in X$ 
is just the complexity over $x$ with respect to the identity morphism.
}
\end{definition}

We recall some results about the complexity log log Calabi--Yau pairs.

\begin{theorem}{\rm(cf.~\cite{Kol92}*{Theorem 18.22})}\label{thm:local-comp}
Let $(X,B)$ be a log canonical pair and $x\in X$ a closed point.
Then $\gamma_x(X,B)\geq 0$.
Furthermore, if $\gamma_x(X,B)=0$,
then $(X,\lfloor B\rfloor)$ is locally toric.
\end{theorem}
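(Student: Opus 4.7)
The plan is to first establish the inequality $\gamma_x(X,B)\geq 0$ via a log resolution argument that bounds the total weight of boundary components through $x$, and then to extract a local toric structure in the equality case by recognizing that the reduced boundary components must form a coordinate system at $x$.

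Since the local complexity is computed with respect to the identity morphism $X\to X$, the relative Picard rank over $x$ vanishes and $\gamma_x(X,B)=\dim(X)-|\Sigma|_x$, where $|\Sigma|_x$ is the sum of the coefficients of those prime components of $B$ passing through $x$. I would take a log resolution $f\colon Y\to X$ and write $f^*(K_X+B)=K_Y+B_Y$, with $B_Y=\widetilde{B}+\sum a_j E_j$ supported on an SNC divisor and with all coefficients in $[0,1]$ by the log canonical hypothesis. To bound $|\Sigma|_x$, I would cut by $\dim(X)-1$ general hyperplane sections through $x$ and induct on dimension; in the base case of a curve germ, the lc condition directly forces the total coefficient at a closed point to be at most $1$. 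Alternatively, one can pick a point of $f^{-1}(x)$ lying on a maximal set of strict transforms of components of $B$ through $x$ and invoke the elementary fact that an SNC configuration on a smooth $n$-fold has at most $n$ components meeting at a point, each contributing weight at most $1$.

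For the equality case $\gamma_x(X,B)=0$, any fractional coefficient would produce a strict loss $|\Sigma|_x<n$; hence one may replace $B$ by $\lfloor B\rfloor$ and conclude that there are exactly $n=\dim(X)$ reduced prime components $B_1,\dots,B_n$ passing through $x$. I would then pass to a $\mathbb{Q}$-factorial dlt modification of $(X,\lfloor B\rfloor)$, on which the reduced boundary forms an SNC configuration at every preimage of $x$. The equality case of the inequality above forces this modification to be small at $x$, so $(X,\lfloor B\rfloor)$ itself is snc at $x$. An SNC arrangement of $n$ divisors through a smooth point of an $n$-fold furnishes a local coordinate system, producing an \'etale neighborhood of $x$ that identifies $(X,\lfloor B\rfloor)$ with the affine toric pair $(\mathbb{A}^n,\sum\{x_i=0\})$.

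The hard part will be the equality case when $x$ is a singular point of $X$. There, even after the formal/analytic picture looks toric, one must upgrade this to a genuine \'etale-local toric identification of the pair, and check that the given reduced boundary coincides with the toric boundary. My plan is to perform the analysis on a $\mathbb{Q}$-factorial dlt modification, where the geometry of the dual complex is rigid enough to pin down a toric chart, and then descend to $X$ using that the quotient of a toric pair by a finite subgroup of the torus is again locally toric. If this descent proves awkward, one can alternatively invoke Koll\'ar's classification of lc singularities of maximal complexity zero in~\cite{Kol92}*{Ch.~18} directly.
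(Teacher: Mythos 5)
The paper does not give a proof of this statement: it cites \cite{Kol92}*{Theorem~18.22} and uses it as a black box, so there is no internal argument to compare against. Your sketch tries to supply one, but both of your routes to the inequality $\gamma_x(X,B)\geq 0$ have gaps. Cutting by a general hyperplane \emph{through} the fixed point $x$ does not preserve log canonicity: for $(\mathbb{A}^2, L_1+L_2)$ with two transverse lines through the origin, restricting to a generic line $H$ through the origin gives $(\mathbb{A}^1, 2\{0\})$, which is not lc; pushing that induction through would in fact ``prove'' the false bound $|\Sigma|_x\leq 1$. Your alternative --- choosing a point of $f^{-1}(x)$ lying on a maximal set of strict transforms --- also does not bound $|\Sigma|_x$, because the strict transforms of distinct prime components of $B$ through $x$ need not share any common point of $f^{-1}(x)$ (for concurrent lines through the origin of $\mathbb{A}^2$, after one blow-up the strict transforms meet the exceptional curve at distinct points), so the SNC bound on $Y$ gives no control on the sum of coefficients downstairs on $X$.

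The equality-case argument also relies on a false step. The equality $\gamma_x(X,B)=0$ does not force $\dim X$ reduced components of $B$ through $x$: the lc pair $\bigl(\mathbb{A}^2,\; L_1+\tfrac{1}{2}L_2+\tfrac{1}{2}L_3\bigr)$ with three concurrent lines has $\gamma_0=0$, yet $\lfloor B\rfloor=L_1$ is a single divisor. (The theorem still holds here --- a smooth divisor through a smooth point is locally toric --- but not via the dichotomy you propose.) The subsequent assertion that the dlt modification ``must be small at $x$'' is unjustified, and, as you concede in your final paragraph, the singular-point case is precisely where the content of Koll\'ar's theorem lives; your closing fallback of invoking \cite{Kol92}*{Ch.~18} directly is exactly what the paper does.
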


\begin{theorem}
{\rm(cf.~\cite{Sho00}*{Theorem 6.4})}\label{thm:locally-toric-surface}
Let $X$ be a surface.
Let $(X,B)$ be a log Calabi--Yau pair and let $X\rightarrow C$ be a fibration to a curve.
Let $c \in C$ be a closed point.
Then, $\gamma_c(X,B)\geq 0$.
Furthermore, if
$\gamma_c(X,B)=0$, 
then $(X,\lfloor B \rfloor)$ is locally toric over $c\in C$.
\end{theorem}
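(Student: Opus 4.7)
The plan is to localize the complexity around the central fiber $F_c := \phi^{-1}(c)$ and reduce the inequality to the local statement of Theorem~\ref{thm:local-comp}. I would first replace $(X, B)$ by a $\qq$-factorial dlt modification $(Y, B_Y) \to (X, B)$. Since this only extracts log canonical places, each exceptional divisor contributes the same increment to both $\rho_c(Y/C)$ and to $|\Sigma_Y|$ (because the new divisor has coefficient one in $B_Y$), so $\gamma_c(Y, B_Y) = \gamma_c(X, B)$, and a local toric structure on $(Y, \lfloor B_Y \rfloor)$ descends to one on $(X, \lfloor B \rfloor)$. After this reduction, I may assume $(X, B)$ is $\qq$-factorial dlt and work in an analytic neighborhood of $F_c$ in which every vertical prime divisor of $B$ is a component of $F_c$.

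Next, I would decompose $B = B_h + B_v$ into its horizontal and vertical parts. Applying the canonical bundle formula to $\phi$ yields $K_X + B \sim_{\qq, c} \phi^*(K_C + B_C)$ near $c$, and log canonicity forces the coefficient of $B_C$ at $c$ to be at most one. Since this coefficient is computed from how the components of $B_v$ contribute to the pull-back of $c$, this constraint bounds the sum of coefficients appearing in $B_v$ by $\rho_c(X/C) + 1$, with equality precisely when every component of $F_c$ appears in $\lfloor B \rfloor$ with coefficient one. For the horizontal part, each prime component of $B_h$ meets $F_c$, and applying Theorem~\ref{thm:local-comp} at every point $x \in F_c$ where several components of $B$ meet produces a local bound on the coefficients of the components through $x$. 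Combining these local bounds along the connected one-dimensional fiber $F_c$ with the vertical bound yields $\gamma_c(X, B) \geq 0$.

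For the equality case, $\gamma_c(X, B) = 0$ forces each local complexity $\gamma_x(X, B)$ to vanish, so Theorem~\ref{thm:local-comp} shows that $(X, \lfloor B \rfloor)$ is locally toric at every point $x \in F_c$. Equality in the vertical bound further forces every component of $F_c$ to appear in $\lfloor B \rfloor$ with coefficient one, so $F_c^{\mathrm{red}}$ is a union of torus-invariant divisors in each local toric chart. One then glues these toric charts along the components of $F_c$ to obtain a toric structure on a neighborhood of $F_c$ such that $(X, \lfloor B \rfloor)$ is locally toric over $c$.

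The main obstacle is the gluing step in the equality case: the local toric charts provided by Theorem~\ref{thm:local-comp} at distinct points of $F_c$ are not canonical, so one must show that the torus actions on overlapping charts can be synchronized so that the components of $F_c$ and the horizontal components of $B$ become simultaneously torus-invariant. This compatibility is not automatic and ultimately depends on the rigidity of two-dimensional toric singularities together with a careful classification of the boundary configurations realizing equality in the local complexity bound, which dictates a unique local toric model at each singular point of the fiber.
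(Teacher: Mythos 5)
Note first that the paper does not supply its own proof of this statement; it is simply recalled with a ``cf.''\ citation to Shokurov's \emph{Complements on surfaces}, exactly as Theorem~\ref{thm:local-comp} and Theorem~\ref{thm:char-toric-comp} in the same subsection are recalled from the literature without proof. There is therefore no in-paper argument to measure your proposal against, and I can only assess it on its own merits as a reconstruction of Shokurov's Theorem~6.4.

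Taken on its own, your proposal contains two genuine gaps, and you flag the second one yourself. The gluing of local toric models along $F_c$ in the equality case is the real content of the theorem, and your final paragraph only restates the difficulty: asserting that ``rigidity of two-dimensional toric singularities together with a careful classification of the boundary configurations'' dictates a compatible choice of local tori is not an argument but a description of what must be proven. The local charts supplied by Theorem~\ref{thm:local-comp} at the nodes of $F_c$ carry no preferred coordinate, and showing that one can synchronise them so that the entire chain $\lfloor B\rfloor$ over $c$ becomes simultaneously invariant needs either a relative MMP reduction over $C$ to a rank-one model followed by an explicit patching, or some comparable device; pointwise local toricity alone does not yield it.

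The earlier gap is in the inequality. You bound the vertical part of $\Sigma$ by $\rho_c(X/C)+1$ via the canonical bundle formula and then invoke Theorem~\ref{thm:local-comp} at points of $F_c$ to control the horizontal part, concluding that ``combining these local bounds along the connected one-dimensional fiber $F_c$ with the vertical bound yields $\gamma_c(X,B)\ge 0$.'' That combination is exactly what needs an argument and is not supplied. Local complexity at a point $x\in F_c$ controls how many boundary components pass through $x$, not the global norm $|\Sigma|$ over $c$; summing such pointwise estimates over $F_c$ over-counts any component that passes through more than one node of the fiber, so some cancellation must be identified and justified, and none is. The cleaner route is to restrict $(X,B)$ to a general fiber $F\cong\pp^1$, use $\deg(B|_F)=2$ to obtain $|\Sigma_{\rm hor}|\le 2$ with equality only when each horizontal component of $\lfloor B\rfloor$ is a section with coefficient one, bound $|\Sigma_{\rm vert}|$ over $c$ by the number of irreducible components of $F_c$ via log canonicity, and observe that $\rho_c(X/C)$ already equals that component count once one accounts for the horizontal class and the extra vertical classes of a reducible fiber. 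Your detour through the canonical bundle formula neither simplifies this bookkeeping nor closes it. As written, both the inequality and the equality case remain unproven.
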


\begin{theorem}{\rm(cf.~\cite{BMSZ18}*{Theorem 1.2})}\label{thm:char-toric-comp}
Let $(X,B)$ be a log Calabi--Yau projective pair. 
Then $\gamma(X,B)\geq 0$.
Furthermore, if
$\gamma(X,B)=0$, then $(X,\lfloor B\rfloor)$ is isomorphic to a toric log pair.
\end{theorem}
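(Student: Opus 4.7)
The plan is to prove both the non-negativity $\gamma(X,B)\geq 0$ and the characterization of the equality case simultaneously by induction on $\dim X$, using a Mori fiber space reduction combined with the local complexity results Theorem~\ref{thm:local-comp} and Theorem~\ref{thm:locally-toric-surface}. The base case is $\dim X = 1$: then $X$ is a smooth curve and $2 - 2g = \deg B \geq |\Sigma|$, so $|\Sigma|\leq 2 = \dim X + \rho(X)$, and equality forces $X\cong \pp^1$ with $B$ the sum of two distinct reduced points, which is exactly the toric pair $(\pp^1,\{0\}+\{\infty\})$.

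For the inductive step I would first pass to a $\qq$-factorial dlt modification $\pi\colon (Y,B_Y)\to (X,B)$, which is crepant. The toric property descends under the contraction $\pi$, so it suffices to prove the theorem on $Y$. Since $K_Y+B_Y\sim_\qq 0$, running a $K_Y$-MMP is a crepant operation on $(Y,B_Y)$ and terminates on a Mori fiber space $f\colon Y'\to Z$. A careful accounting of how prime components and Picard ranks change under each step shows that $\gamma$ is non-increasing: divisorial contractions balance the drop in $\rho$ against the disappearance of a component of $\Sigma$, while flips preserve both quantities.

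Next I would apply the canonical bundle formula to $f\colon Y'\to Z$ to produce a log Calabi--Yau pair $(Z,B_Z)$. Writing $B_{Y'} = B_{Y'}^h + B_{Y'}^v$ for the horizontal/vertical decomposition, the general fiber $F$ of $f$ is Fano of Picard rank one, and $(F,B_F := B_{Y'}^h|_F)$ is log Calabi--Yau; a direct degree computation on $F$ shows $\gamma(F,B_F)\geq 0$, with equality forcing $F\cong \pp^{\dim F}$ and $B_F$ the standard toric boundary. Tracking components through the canonical bundle formula yields the key inequality
\[
\gamma(Y',B_{Y'}) \;\geq\; \gamma(Z,B_Z) + \gamma(F,B_F),
\]
and by induction $\gamma(Z,B_Z)\geq 0$, establishing $\gamma(X,B)\geq 0$. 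In the equality case, both $(Z,B_Z)$ and $(F,B_F)$ are log Calabi--Yau toric pairs.

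The principal obstacle is the gluing step in the equality case: from toric structures on $Z$ and on the general fiber $F$, one must assemble a global torus action on $Y'$ realizing $B_{Y'}$ as its toric boundary. This is where Theorem~\ref{thm:locally-toric-surface} (or its higher-dimensional generalization) is essential, since it promotes toricity of the general fiber to local toricity of $f$ over every closed point of $Z$; the resulting local toric charts then glue consistently along the toric structure of $Z$ to yield a global toric structure on $Y'$. Pushing this back through the MMP and the dlt modification finally gives the required toric structure on $(X,\lfloor B\rfloor)$.
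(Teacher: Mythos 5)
The paper does not prove this statement: it is recorded with the citation ``cf.~\cite{BMSZ18}*{Theorem 1.2}'' and used as a black box, so there is no internal argument to compare yours against. What you have written is a high-level roadmap of the strategy of Brown--McKernan--Svaldi--Zong, not a proof, and it conceals the two places where the real work happens.

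First, the inequality
\[
\gamma(Y',B_{Y'}) \;\geq\; \gamma(Z,B_Z) + \gamma(F,B_F)
\]
is precisely the technical heart of \cite{BMSZ18}, and ``tracking components through the canonical bundle formula'' does not deliver it. Unwinding the definitions, it reduces to comparing $|\Sigma^{\mathrm{hor}}|+|\Sigma^{\mathrm{vert}}|$ with $|\Sigma_F|+|\Sigma_Z|$; the horizontal comparison needs care because a single horizontal component can meet a general fiber in several points, and the vertical comparison needs the fact that over a Mori fiber space every vertical prime divisor is (the reduction of) the pullback of a prime divisor of $Z$, plus a genuine lct estimate to bound the discriminant coefficients from below. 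None of this is formal. You should also note that the equality case of the fiber analysis only gives $F$ a (fake) weighted projective space, not necessarily $\pp^{\dim F}$, so the ``standard toric boundary'' phrasing is inaccurate.

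Second, and more seriously, the gluing step in the equality case appeals to ``Theorem~\ref{thm:locally-toric-surface} or its higher-dimensional generalization.'' Theorem~\ref{thm:locally-toric-surface} is Shokurov's surface-only theorem on complements; its higher-dimensional analogue for fibrations is essentially equivalent to the global toric characterization you are trying to prove, so as written the argument is circular. You would need to replace this appeal with the actual inductive/combinatorial gluing carried out in \cite{BMSZ18}, which is substantial. Finally, for the descent from $Y$ back to $X$ in the equality case, it is not automatic that a crepant contraction of a toric pair along a component of $\lfloor B_Y\rfloor$ is again toric; this also needs to be argued. In short: the outline is the right shape, but every arrow in it is a theorem, and the paper itself sidesteps all of this by citing the reference.
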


\begin{proposition}\label{prop:locally-toric-morphism}
Let $(X,B)$ be a log Calabi--Yau surface.
Let $X\rightarrow C$ be a fibration to a curve.
Assume that $\lfloor B\rfloor_{\rm hor}$ has two irreducible components over $c\in C$.
Then, one of the following statement holds:
\begin{enumerate}
    \item If ${\rm lct}_c((X,B);\pi^*(c))>0$, 
    then $(X,B)$ is locally toric over $c\in C$.
    \item If ${\rm lct}_c((X,B);\pi^*(c))=0$, then there is a crepant birational transformation of $(X,B)$ over $c$, which is an isomorphism outside $\pi^{-1}(c)$, and is locally toric over $c\in C$
\end{enumerate}
\end{proposition}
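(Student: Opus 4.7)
The plan is to reduce both cases to Theorem~\ref{thm:locally-toric-surface} by exhibiting a log Calabi--Yau pair over $c$ whose local complexity vanishes. A preliminary step constrains the geometry of $\pi$: restricting $K_X+B\sim_{\mathbb{Q}} 0$ to a general fiber $F$, the assumption that $\lfloor B\rfloor_{\rm hor}$ has two components forces $\deg B|_F\geq 2$; together with $K_F+B|_F\sim 0$ this gives $F\cong\mathbb{P}^1$ and $B|_F$ equal to the reduced sum $H_1|_F+H_2|_F$. Thus $H_1, H_2$ are sections of $\pi$, $B$ has no other horizontal components, and $(X,B)\to C$ is birationally a $\mathbb{P}^1$-fibration with two marked sections.

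In Case~(1), set $t:=\operatorname{lct}_c((X,B);\pi^*(c))>0$ and $B_t:=B+t\,\pi^*(c)$. Since $\pi^*(c)$ is principal in a neighborhood of $c$, the pair $(X,B_t)$ is log Calabi--Yau over $c$; by the choice of $t$, it is log canonical and admits a log canonical place whose center lies in $\pi^{-1}(c)$. Writing $\pi^*(c)=\sum_i n_i F_i$, I compute the local complexity $\gamma_c(X,B_t)$ using the natural decomposition of $B_t$: the two horizontal floor components contribute $2$ to $|\Sigma|$, and the vertical components over $c$ contribute $\sum_i(b_i+tn_i)$, where $b_i$ is the original vertical coefficient. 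The canonical bundle formula $K_X+B=\pi^*(K_C+B_C)$, combined with the relation $\sum_i n_iF_i\sim 0$ generating the vertical part of $N^1(X/C)_c$, forces $\sum_i(b_i+tn_i)=\rho_c(X/C)$, so $\gamma_c(X,B_t)=0$. Applying Theorem~\ref{thm:locally-toric-surface} identifies $(X,\lfloor B_t\rfloor)$ with a toric pair locally over $c$, and since every component of $B$ is torus-invariant in this structure, $(X,B)$ itself is locally toric over $c$.

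In Case~(2), the pair $(X,B)$ already admits a log canonical place with center in $\pi^{-1}(c)$. Let $(X',B')\to(X,B)$ be a $\mathbb{Q}$-factorial dlt modification extracting every such place; since all such places are vertical over $C$, this is an isomorphism outside $\pi^{-1}(c)$. The same complexity calculation applies on $(X',B')$: extracting a log canonical place increases both $|\Sigma|$ and $\rho_c$ by one, preserving the complexity, so the computation from Case~(1) still yields $\gamma_c(X',B')=0$. Theorem~\ref{thm:locally-toric-surface} then gives that $(X',\lfloor B'\rfloor)$ is locally toric over $c$, which is the asserted crepant toric transformation.

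The main obstacle is the complexity computation $|\Sigma|=2+\rho_c$, which requires using the canonical bundle formula to constrain the vertical coefficients of $B$ and relating them to the relative Picard rank. The key point is that for a $\mathbb{P}^1$-fibration admitting a section, $\rho_c(X/C)$ equals the number of fiber components over $c$, and after passing to a dlt model and adding $t\pi^*(c)$, the vertical coefficients exactly fill out this Picard rank by the LCY constraint.
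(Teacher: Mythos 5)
Your overall strategy---reduce to Theorem~\ref{thm:locally-toric-surface} by exhibiting a model with vanishing complexity over $c$---is the same as the paper's, but the key step that makes the complexity vanish is asserted rather than proved, and that is a genuine gap.

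In Case~(1) you claim $\sum_i(b_i+t n_i)=\rho_c(X/C)$ and attribute this to ``the canonical bundle formula combined with the relation $\sum_i n_iF_i\sim 0$.'' Those facts do not yield the identity. What is actually going on is a two-layer phenomenon: the log Calabi--Yau condition $(K_X+B_t)\cdot F_i=0$, together with the structure of the intersection matrix of a fiber (negative semi-definite with kernel spanned by $(n_i)$), pins down the vertical coefficients of $B_t$ up to a one-parameter family $c_i=1-sn_i$, and the lct condition then forces $s=0$, so $c_i=1$ for all $i$. But this argument presupposes that the fiber over $c$ is a chain of $\mathbb{P}^1$'s with two marked ends and that $X$ is smooth/snc near the fiber, and that $\rho_c(X/C)$ is exactly the number of fiber components---none of which is a hypothesis. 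For singular $X$ or a non-snc fiber, the identity is not immediate and may require passing to a good birational model first, which is precisely what the paper does. The paper's proof takes a $\mathbb{Q}$-factorial dlt modification of $(X,B+t\pi^*(c))$ over $C$ and then runs a $(K_Y+B_Y-\epsilon E)$-MMP over $C$, arriving at a model $(Z,B_Z)$ with $\rho_c(Z/C)=1$, on which $\gamma_c=2+1-3=0$ is trivial. It then descends the locally-toric conclusion back through $Y\to X$. Your attempt to verify $\gamma_c(X,B_t)=0$ directly on $X$ omits this reduction, and the identity you assert is the nontrivial content you would need to supply.

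Case~(2) has a further problem. You pass to a dlt modification $(X',B')$ and claim the ``same complexity calculation'' applies, but in this case $t=0$, so there is no $t\pi^*(c)$ to add; you would need $\sum b_i'=\rho_c(X'/C)$ with the actual vertical coefficients of $B'$, and on a dlt modification the vertical components that are not lc centers retain coefficient strictly less than one, so the sum is generically too small. The paper handles this by first applying the connectedness theorem (distinguishing whether the two horizontal floor components meet over $c$ or are linked by vertical lc centers), and then running an additional $(K_Y+B_Y-\epsilon E)$-MMP over $C$ to contract those deficient vertical components, reaching a model $Z$ with reduced $B_Z$ and $\gamma_c(Z,B_Z)=0$. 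A dlt modification alone does not suffice, and the needed MMP step is absent from your argument. Finally, note that the proposition's definition of ``crepant'' only permits extracting divisors with log discrepancy in $[0,1]$, so the crepant transformation in Case~(2) must be checked against that condition---another point the paper addresses explicitly and your sketch leaves implicit.
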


\begin{proof}
Assume that ${\rm lct}_c((X,B);\pi^*(c))=t>0$.
Since this log canonical threshold is positive, 
then the two irreducible components of
$\lfloor B\rfloor_{\rm hor}$ are disjoint over $c\in C$.
Otherwise, $(X,B)$ is strictly log canonical over $c$.
Consider the log pair $(X,B+t\pi^*(c))$ which is log Calabi--Yau
(up to shrinking around $c$).
Let $(Y,B_Y)$ be a dlt modification 
of $(X,B+t\pi^*(C))$ over $C$.
Then, $B_Y$ contains a vertical curve with coefficient one mapping to $c$.
We call such a curve $E$.
Note that ${\rm coeff}_P(B_Y)>0$
for every prime divisor $P$ mapping to $c$.
Let $(Z,B_Z-\epsilon E_Z)$ be the outcome of running a minimal model program
for $(Y,B_Y-\epsilon E)$ over $C$,
where $\epsilon$ is a small positive rational number.
Here, as usual,
$B_Z$ is the push-forward of $B_Y$ to $Z$
and $E_Z$ is the push-forward of $E$ to $Z$.
Note that $(Z,B_Z)$ is a $\qq$-factorial log Calabi--Yau pair with a fibration to $C$.
Furthermore, by construction, we have that
$\rho_c(Z/C)=1$.
We conclude that 
$\gamma_c(Z,B_Z)=0$.
By Theorem~\ref{thm:locally-toric-surface},
we conclude that $(Z,B_Z)$ is locally toric over $c\in C$.
Note that the morphism $Y\rightarrow Z$ only extract prime divisors with log discrepancy 
in the interval $[0,1)$ with respect to the $(Z,B_Z)$.
We conclude that $(Y,B_Y)$ is locally toric over $c\in C$ as well.
Finally, since we have a contraction $Y\rightarrow X$ over $c\in C$, 
we conclude that $(X,B)$ is locally toric over $c$.

Assume that ${\rm lct}_c((X,B);\pi^*(c))=0$.
We can apply the connectedness of the log canonical centers~\cite{HH19}*{Theorem 1.2}. We conclude that either the two irreducible components of $\lfloor B\rfloor_{\rm hor}$ are connected by components of $\lfloor B\rfloor_{\rm ver}$
or the two components of $\lfloor B\rfloor_{\rm hor}$ intersect over $c$.
If the former case happens, we take a component $E\subset \lfloor B\rfloor_{\rm ver}$ and run a minimal model for $(X,B-\epsilon E)$ over $C$.
It terminates with a model $(Y,B_Y-\epsilon E_Y)$ for which
$\gamma_c(Y,B_Y)=0$.
By Theorem~\ref{thm:locally-toric-surface}, we have that $(Y,B_Y)$ is locally toric over $c\in C$.
On the other hand, if the latter case happens, we take a $\qq$-factorial dlt modification $(Y,B_Y)$ of $(X,B)$ over $c$ in $C$.
Let $E$ be a prime divisor of $\lfloor B_Y\rfloor$ which maps to $c$.
We run a minimal model program for
$(Y,B_Y-\epsilon E)$ over $C$ which terminates with a minimal model $(Z,B_Z-\epsilon E_Z)$.
In this model, the effective divisor $B_Z$ is reduced.
Note that $\gamma_c(Z,B_Z)=0$. Hence,
we conclude that $(Z,B_Z)$ is locally toric over $c\in C$.
Observe that in any case, the birational map
$X\dashrightarrow Z$ is an isomorphism over $C\setminus c$.
\end{proof}

\subsection{$G$-equivariant locally toric surface morphisms}

In this subsection, we introduce a $G$-equivariant version of the complexity. 
We prove the $G$-equivariant versions of some of the results in the last subsection.

\begin{definition}
{\em 
Let $X$ be a normal quasi-projective variety and $G\leqslant {\rm Aut}(X)$ be a finite automorphism group.
We say that an effective Weil divisor $D$ on $X$ is {\em $G$-prime} or {\em $G$-irreducible} if it $G$-invariant and is not the union of two $G$-invariant effective Weil divisors.
For instance, the $G$-orbit of a prime reduced divisor on $X$ is $G$-prime.
}
\end{definition}

\begin{definition}{\em 
Let $X$ be a quasi-projective normal variety and 
$G\leqslant {\rm Aut}(X)$ be a finite automorphism group.
Let $\phi\colon X\rightarrow Z$ be a $G$-equivariant fibration.
We have an exact sequence 
$1\rightarrow G_{\phi,f} \rightarrow G\rightarrow G_{\phi,b}\rightarrow 1$. Here, $G_{\phi,b}$ is the finite automorphism group induced on the base and $G_{\phi,f}$ is the finite automorphism group acting fiber-wise.
Analogously, $G_{\phi,f}$ is the kernel of the natural surjective homomorphism $G\rightarrow G_{\phi,b}$.
We call $G_{\phi,f}$ (resp. $G_{\phi,b}$) the {\em fiber-wise automorphism group} (resp. {\em base automorphism group}).
If $\phi$ is clear from the context, we will omit it from the notation.
}
\end{definition}

\begin{definition}{\em
Let $X$ be an algebraic variety and
$G\leqslant {\rm Aut}(X)$ a finite subgroup.
A {\em $G$-invariant decomposition} of an effective $G$-invariant divisor $B$ is an expression of the form $B:=\sum_{i=1}^k b_i B_i$, where the $B_i$'s are effective $G$-invariant Weil divisors and the $b_i$'s are non-negative real numbers. We denote by $|\Sigma|^G:=\sum_{i=1}^k b_i$
the {\em $G$-invariant norm} of the decomposition.
Each $G$-invariant effective divisor $B$ on an algebraic variety $X$ has a natural decomposition given by the $G$-orbits of the prime decomposition of $B$.

Let $(X,B)$ be a $G$-invariant log canonical pair and
$X\rightarrow C$ be a $G$-equivaraint contraction.
Let $\Sigma$ be a $G$-invariant decomposition of $B$.
Let $c\in C$ be a closed point.
The {\em $G$-complexity} or {\em $G$-invariant complexity} of $(X,B)$ over $c$, denoted by
$\gamma^G_c(X,B)$, is defined to be
\[
\dim(X)+\rho^G_c(X/C)-|\Sigma|^G.
\]
Here $\rho^G_c(X/C)$ is the $G$-invariant relative Picard rank of $X$ over $c\in C$.
}
\end{definition}

\begin{theorem}\label{thm:G-locally-toric-surface}
Let $X$ be a surface and
$G\leqslant {\rm Aut}(X)$ be a finite automorphism group.
Let $(X,B)$ be a $G$-invariant log Calabi--Yau pair 
and $X\rightarrow C$ be a $G$-equivariant fibration to a curve.
Let $c\in C$ be a closed point.
Then, $\gamma^G_c(X,B)\geq 0$.
Furthermore, if 
$\gamma^G_c(X,B)=0$, then $(X,\lfloor B\rfloor)$ is $G$-equivariantly locally toric over the $G_b$-orbit of $c$ in $C$.
\end{theorem}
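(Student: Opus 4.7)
The plan is to reduce the $G$-equivariant statement to the non-equivariant Shokurov theorem (Theorem~\ref{thm:locally-toric-surface}) by passing to the quotient by $G$. Let $\pi\colon X \to Y := X/G$ be the quotient morphism and let $\pi_C\colon C \to C_Y := C/G_{b}$ be the induced quotient on the base, so that the fibration $\phi$ descends to a fibration $\phi_Y\colon Y \to C_Y$. Set $c_Y := \pi_C(c)$. By Proposition~\ref{prop:hurwitz}, there is a log canonical pair $(Y,B_Y)$ with $\pi^*(K_Y + B_Y) = K_X + B$; in particular $(Y,B_Y)$ is a log Calabi--Yau surface with a fibration to a curve, so Theorem~\ref{thm:locally-toric-surface} applies to $(Y,B_Y)$ over $c_Y$.

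Next I would compare the three ingredients of the complexity. The dimensions of $X$ and $Y$ agree. For the relative Picard rank, pull-back along a finite group quotient in characteristic zero induces an isomorphism $\Pic(Y/C_Y)_\qq \xrightarrow{\sim} \Pic(X/C)^G_\qq$, so $\rho^G_c(X/C) = \rho_{c_Y}(Y/C_Y)$. For the norms, decompose $B = \sum_j b_j P_j^G$ into its $G$-orbits, so that $|\Sigma|^G = \sum_j b_j$. By Proposition~\ref{prop:hurwitz}, the image prime $Q_j := \pi(P_j)$ carries coefficient $\tfrac{m_j - 1 + b_j}{m_j}$ in $B_Y$, where $m_j$ is the ramification index of $\pi$ along $P_j$. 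Hence
\[
|\Sigma_Y| - |\Sigma|^G \;=\; \sum_j \frac{(m_j - 1)(1 - b_j)}{m_j} \;\geq\; 0.
\]
Combining these three observations,
\[
\gamma^G_c(X,B) \;=\; \dim X + \rho^G_c(X/C) - |\Sigma|^G \;\geq\; \dim Y + \rho_{c_Y}(Y/C_Y) - |\Sigma_Y| \;=\; \gamma_{c_Y}(Y,B_Y) \;\geq\; 0
\]
by Theorem~\ref{thm:locally-toric-surface}, which yields the first assertion.

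If $\gamma^G_c(X,B) = 0$, then every inequality above becomes an equality. The equality $|\Sigma_Y| = |\Sigma|^G$ forces $(m_j - 1)(1 - b_j) = 0$ for every $j$; that is, $\pi$ is ramified only along divisors with $b_j = 1$, which are precisely the components of $\lfloor B \rfloor$. The equality $\gamma_{c_Y}(Y,B_Y) = 0$ together with Theorem~\ref{thm:locally-toric-surface} shows that $(Y, \lfloor B_Y \rfloor)$ is locally toric over $c_Y$. Since the Galois cover $\pi\colon X \to Y$ is ramified exclusively along the toric boundary of this local toric model, locally over $c_Y$ the cover corresponds to a finite-index inclusion of character lattices, so $(X, \lfloor B \rfloor)$ inherits a toric structure in which $G$ sits inside the kernel of the covering isogeny of tori. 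Pulling this back over the fibers of $\pi_C$ over $c_Y$ produces the required $G$-equivariant local toric description of $(X,\lfloor B\rfloor)$ over the $G_b$-orbit of $c$ in $C$.

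The main obstacle is the last step: justifying that a Galois cover of a locally toric surface ramified only along its toric boundary is itself locally toric in a $G$-equivariantly compatible way. This should follow from the standard lattice-theoretic description of abelian covers of toric varieties (where the ramification indices prescribe a sublattice of the ambient character lattice), together with the fact that at each ramified divisor the inertia is cyclic; one must carefully track the $G$-action throughout the construction so that the torus constructed on $X$ centralizes $G$ and yields an honest equivariant local toric structure rather than merely a toric one.
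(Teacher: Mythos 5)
Your first half is essentially the paper's argument: quotient by $G$, apply Proposition~\ref{prop:hurwitz} for the crepant pair $(Y,B_Y)$, match $\dim$, match $\rho^G_c(X/C)$ with $\rho_{c_Y}(Y/C_Y)$, compare norms coefficient-by-coefficient to get $\gamma^G_c(X,B)\geq \gamma_{c_Y}(Y,B_Y)\geq 0$, and observe that equality forces ramification to lie in $\lfloor B\rfloor$. That all matches.

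The gap is exactly where you flag it. You propose to finish by citing the lattice-theoretic description of abelian covers of toric varieties, claiming that $G$ sits inside the kernel of an isogeny of tori and that the toric structure lifts $G$-equivariantly. But nothing in the hypotheses forces $G$ to be abelian, and more importantly the theorem you want to invoke is not stated or established in the paper; as written, your last paragraph is a sketch of a plan rather than a proof, and you say so yourself (``this should follow from\ldots one must carefully track the $G$-action''). The paper avoids this entirely. Once it knows $(Y,\lfloor B_Y\rfloor)$ is locally toric over $c_Y$ and that $X\rightarrow Y$ ramifies only along $\lfloor B\rfloor$, it sets $D_Y$ equal to the local toric boundary on $Y$, pulls back crepantly to obtain $(X,D_X)$, and then runs a second complexity estimate directly on $X$: after shrinking around the $G_b$-orbit of $c$, the prime components of $D_X$ generate $\Pic(X/C)$ (because $X\setminus D_X\rightarrow Y\setminus D_Y$ is unramified onto a set with trivial relative Picard group), and the two independent linear relations among the components of $D_Y$ pull back to two independent relations among the $k$ components of $D_X$, so $\rho_c(X/C)\leq k-2$. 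This gives $\gamma_c(X,D_X)\leq 2 + (k-2) - k = 0$, and applying the non-equivariant Theorem~\ref{thm:locally-toric-surface} a second time, now to $(X,D_X)$ over $c$, shows $(X,D_X)$ is locally toric over $c$; since $\lfloor B\rfloor\subset D_X$, the conclusion follows. In short, the paper replaces your lift-the-abelian-cover step with a second application of the same complexity criterion on the cover itself, which requires no structure theory of the Galois group and no separate justification of equivariance. To repair your proof you would either need to supply a careful $G$-equivariant statement about covers of locally toric surfaces ramified only along the boundary, or adopt the paper's two-pass complexity argument.
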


\begin{proof}
Quotienting $X$ by $G$, we obtain a commutative diagram as follows:
\[
 \xymatrix{
(X,B) \ar[r]^-{/G} \ar[d]_-{\pi} & (Y,B_Y) \ar[d]^-{\pi_Y} \\
C \ar[r]^-{/G_b} & C_Y.\\ 
 }
\]
Let $c_0$ be the image of the $G_b$-orbit of $c$ in $C$.
We claim that, we have the following inequalities
\[
\gamma^G_c(X,B)\geq \gamma_{c_0}(Y,B_Y)\geq 0.
\]
Indeed, $\dim(X)=\dim(Y)$, 
$\rho^G_c(C/X)=\rho_{c_0}(Y/C_Y)$, and 
\begin{equation}\label{quotient-coefficients}
|\Sigma|^G =\sum_{i=1}^k b_i 
\leq \sum_{i=1}^k \left( 1-\frac{1}{m}+\frac{b_i}{m}
\right)
=|\Sigma|,
\end{equation}
where $m$ is the multiplicity index of $X\rightarrow Y$ at a $G$-prime component $B_i$ of coefficient $b_i$.
Note that the equality of~\eqref{quotient-coefficients} holds if and only if 
$b_i=1$ or $m_i=1$.
Hence, the inequality
$\gamma^G_c(X,B)\geq 0$ holds.
Furthermore, the equality
$\gamma^G_c(X,B)=\gamma_{c_0}(Y,B_Y)$ holds if and only if $X\rightarrow Y$ only ramifies at divisors of $\lfloor B\rfloor$.
Assume that $\gamma^G_c(X,B)=0$, 
then we have that
$\gamma_{c_0}(Y,B_Y)=0$.
By Theorem~\ref{thm:locally-toric-surface}, we conclude that $(Y,\lfloor B_Y\rfloor)$ is toric over $c_0$.
Furthermore, $X\rightarrow Y$ only ramifies at divisors of $\lfloor B_Y\rfloor$.
Let $D_Y$ be the toric boundary of $Y$ over $c_0$.
In particular, we have that
$\lfloor B_Y\rfloor\subset D_Y$.
Note that the complement $U_Y$ of $D_Y$ on $Y$ has Picard rank zero.
Let $(X,D_X)$ be the log pull-back of $(Y,D_Y)$ to $X$.
By shrinking around the $G_b$-orbit of $c$, we may assume that the prime components of $D_X$ generate the relative Picard of $X$ over $C$.
Let $k$ be the number of prime components of $D_X$.
We have two linearly independent algebraic relations among the irreducible components of $D_X$.
These two relations come from pulling back the corresponding relations on $Y$ over $C$.
Thus, $\rho_c(X/C)\leq k-2$.
We conclude that
\[
\gamma_c(X,D_X)=\dim(X)+\rho_c(X/C)-|D_X| 
\leq 2 + (k-2) - k \leq 0.
\]
Thus, the Picard rank of $X$ over the $G_b$-orbit of $c$ is just $k-2$ and $(X,D_X)$ is locally a log Calabi--Yau toric pair over $c$.
Since $\lfloor B\rfloor \subset D_X$, we conclude that $(X,\lfloor B\rfloor)$ is locally toric over $c$.
\end{proof}

\begin{theorem}\label{thm:g-inv-comp}
Let $X$ be a normal projective variety of dimension $n$
and $G\leqslant {\rm Aut}(X)$ be a finite automorphism group.
Let $(X,B)$ be a $G$-invariant log Calabi--Yau pair.
Then $\gamma^G(X,B)\geq 0$.
Furthermore, if $\gamma^G(X,B)=0$, then
$(X,\lfloor B\rfloor)$ is isomorphic to a toric pair.
\end{theorem}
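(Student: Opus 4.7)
The plan is to mimic the strategy of Theorem~\ref{thm:G-locally-toric-surface}, replacing the local surface input (Theorem~\ref{thm:locally-toric-surface}) with the global characterization of toric pairs by the complexity (Theorem~\ref{thm:char-toric-comp}). The key idea is to pass to the quotient, control the complexity under the quotient, and then transfer the toric structure back upstairs using a Picard rank count.

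First, I would let $\pi\colon X\rightarrow Y$ denote the quotient of $X$ by $G$, and apply Proposition~\ref{prop:hurwitz} to produce a log canonical pair $(Y,B_Y)$ with $\pi^*(K_Y+B_Y)=K_X+B$. Since $K_X+B\sim_\qq 0$ and $\pi$ is finite, $(Y,B_Y)$ is again a log Calabi--Yau pair. The natural $G$-invariant decomposition $\Sigma$ of $B$ (the $G$-orbits of its prime components) descends to a decomposition $\Sigma_Y$ of $B_Y$ on $Y$.

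Next, I would establish the inequality $\gamma^G(X,B)\geq \gamma(Y,B_Y)$. Clearly $\dim(X)=\dim(Y)$ and $\rho^G(X)=\rho(Y)$, since $G$-invariant Weil divisors on $X$ correspond to Weil divisors on $Y$ up to pull-back. For the norm, Proposition~\ref{prop:hurwitz} shows that a coefficient $b_i$ of $\Sigma$ corresponds to a coefficient $1-\tfrac{1}{m_i}+\tfrac{b_i}{m_i}$ of $\Sigma_Y$, where $m_i\geq 1$ is the ramification index, and a direct computation gives
\[
|\Sigma|^G=\sum_i b_i\ \leq\ \sum_i\Bigl(1-\tfrac{1}{m_i}+\tfrac{b_i}{m_i}\Bigr)=|\Sigma_Y|,
\]
with equality exactly when for every $i$ either $b_i=1$ or $m_i=1$, i.e., $\pi$ only ramifies along components of $\lfloor B\rfloor$. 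Combined with Theorem~\ref{thm:char-toric-comp} applied to $(Y,B_Y)$, which yields $\gamma(Y,B_Y)\geq 0$, this gives $\gamma^G(X,B)\geq 0$.

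Finally, suppose $\gamma^G(X,B)=0$. Then $\gamma(Y,B_Y)=0$, so by Theorem~\ref{thm:char-toric-comp} the pair $(Y,\lfloor B_Y\rfloor)$ is isomorphic to a toric pair, and by the equality case above, $\pi$ ramifies only along divisors of $\lfloor B_Y\rfloor$. To upgrade this to $(X,\lfloor B\rfloor)$, I would pull back the toric boundary $D_Y$ of $Y$ to a divisor $D_X$ on $X$; since $\lfloor B_Y\rfloor\subset D_Y$ and $\pi$ is unramified outside $D_Y$, the complement $X\setminus D_X$ is étale over the torus $Y\setminus D_Y$, hence is itself a torus, and the global Picard-rank relations on $Y$ pull back to $X$. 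A global complexity calculation analogous to the one in the proof of Theorem~\ref{thm:G-locally-toric-surface} then shows $\gamma(X,D_X)\leq 0$, so $(X,D_X)$ is a toric log Calabi--Yau pair by Theorem~\ref{thm:char-toric-comp}, and since $\lfloor B\rfloor\subset D_X$, the pair $(X,\lfloor B\rfloor)$ is toric.

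The main obstacle I expect is the last step: transferring the toric structure from $Y$ to $X$. The comparison of $G$-invariant and ordinary Picard ranks has to be handled carefully, since the naive identification $\rho^G(X)=\rho(Y)$ only holds for divisor classes, and one needs the two independent toric relations on $Y$ to actually pull back to independent relations on $X$ in order to bound $\rho(X)$ and force $\gamma(X,D_X)\leq 0$.
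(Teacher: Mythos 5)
Your proposal matches the paper's proof almost exactly: quotient by $G$ using Proposition~\ref{prop:hurwitz}, compare $\gamma^G(X,B)$ with $\gamma(Y,B_Y)$ via the same coefficient computation, invoke Theorem~\ref{thm:char-toric-comp} on the quotient, pull back the toric boundary $D_Y$ to $D_X$, use the \'etale-over-the-torus observation to bound $\rho(X)$, and finish with a complexity count. (The only slip is in your final caveat, where you say ``two independent toric relations''; in dimension $n$ it is $n$ relations, as you correctly use in the body of the argument.)
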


\begin{proof}
Let $(Y,B_Y)$ be the quotient of $(X,B)$ by $G$.
We know that 
$\gamma^G(X,B)\geq \gamma(Y,B_Y)\geq 0$.
This proves the first statement.
Moreover, equality holds if and only if $X\rightarrow Y$ only ramifies at $\lfloor B\rfloor$.
Assume that $\gamma^G(X,B)=0$,
then $(Y,\lfloor B_Y\rfloor)$ is a toric pair.
Let $(Y,D_Y)$ be the toric log Calabi--Yau structure.
Denote by $(X,D_X)$ the log pull-back of $(Y,D_Y)$ to $X$.
Note that $X\rightarrow Y$ is unramified over $\mathbb{G}_m^n$.
Hence, $X\setminus D_X\simeq \mathbb{G}_m^n$.
We conclude that the Picard group of $X$ is generated by the irreducible components of $D_X$.
Let $k$ be the number of such irreducible components.
Then, $\rho(X)\leq k$.
We claim that there are at least $n$ linearly independent algebraic relations among the irreducible components of $D_X$.
Indeed, this statement holds among the prime components of $D_Y$ in $Y$. Pulling back these linear relations we obtain the corresponding relations on $X$.
Thus, $\rho(X)\leq k-n$,
where $n$ is the dimension of $X$.
Now, we can proceed to compute the complexity of $(X,D_X)$.
Note that
\[
\gamma(X,D_X)=\rho(X)+\dim(X)-|D_X|
\leq k-n+n-k=0.
\]
By Theorem~\ref{thm:char-toric-comp}, we conclude that $\rho(X)=k-n$
and $(X,D_X)$ is a toric log Calabi--Yau pair.
Since $\lfloor B \rfloor \subset D_X$, we conclude that
$(X,\lfloor B\rfloor)$ is a toric pair as well.
\end{proof}

\begin{proposition}\label{prop:G-locally-toric-morphism}
Let $X$ be a surface and
$G\leqslant {\rm Aut}(X)$ be a finite automorphism group.
Let $(X,B)$ be a $G$-invariant log Calabi--Yau pair and $X\rightarrow C$ 
be a $G$-equivariant fibration to a curve.
Assume that $\lfloor B\rfloor_{\rm hor}$ has two irreducible components over $c\in C$.
Then, one of the following statements hold:
\begin{enumerate}
    \item If ${\rm lct}((X,B);\pi^*(c))>0$, then
    $(X,B)$ is $G$-equivariantly locally toric over the $G_b$-orbit of $c$ in $C$.
    \item If ${\rm lct}((X,B);\pi^*(c))=0$, then there is a crepant $G$-equivariant birational transformation of $(X,B)$ over the $G_b$-orbit of $c$ in $C$ which is an isomorphism outisde
    the $G$-orbit of $\pi^{-1}(c)$ and is $G$-equivariantly locally toric over the $G_b$-orbit of $c$ in $C$.
\end{enumerate}
\end{proposition}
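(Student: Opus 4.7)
The plan is to mirror the scheme of Proposition~\ref{prop:locally-toric-morphism}, replacing each ingredient by its $G$-equivariant counterpart and using Theorem~\ref{thm:G-locally-toric-surface} as the terminal detection step. Throughout, let $\bar c := G_b\cdot c \subset C$ denote the $G_b$-orbit of $c$, and work in a $G_b$-invariant neighborhood of $\bar c$. Replace $\pi^*(c)$ by the $G$-invariant divisor $\pi^*(\bar c):=\sum_{c'\in \bar c}\pi^*(c')$; since $G_b$ permutes the points of $\bar c$, we have $\mathrm{lct}_{c'}((X,B);\pi^*(c'))=\mathrm{lct}_c((X,B);\pi^*(c))$ for every $c'\in \bar c$.

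For case (1), with $t:=\mathrm{lct}_c((X,B);\pi^*(c))>0$, the pair $(X,B+t\pi^*(\bar c))$ is $G$-invariant, log canonical, and log crepant trivial over $C$ around $\bar c$. As in the non-equivariant argument the two horizontal round components must be disjoint over $\bar c$. Take a $G$-equivariant $\qq$-factorial dlt modification $(Y,B_Y)\to (X,B+t\pi^*(\bar c))$; then $B_Y$ contains at least one vertical $G$-orbit of prime divisors with coefficient one mapping to $\bar c$, call it $E$. Running a $G$-equivariant MMP for $(Y,B_Y-\epsilon E)$ over $C$ (which terminates since we are on a surface and the relevant contractions are of $G$-invariant $(-1)$-curves) yields a $G$-equivariant model $(Z,B_Z-\epsilon E_Z)$ over $C$ with $\rho^G_{\bar c}(Z/C)=1$. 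Counting: $\dim Z=2$, $\rho^G_{\bar c}(Z/C)=1$, and $B_Z$ contains at least three $G$-prime round components over $\bar c$ (the two horizontal ones and $E_Z$), so $\gamma^G_{\bar c}(Z,B_Z)=0$. By Theorem~\ref{thm:G-locally-toric-surface}, $(Z,\lfloor B_Z\rfloor)$ is $G$-equivariantly locally toric over $\bar c$, and the conclusion for $(X,B)$ follows from the crepant relations $(Z,B_Z)\leftarrow (Y,B_Y)\to (X,B+t\pi^*(\bar c))$, noting that $Y\to X$ only extracts divisors with log discrepancy in $[0,1]$ with respect to $(X,B)$.

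For case (2) with $\mathrm{lct}_c((X,B);\pi^*(c))=0$, apply the connectedness of log canonical centers~\cite{HH19}*{Theorem 1.2} fiberwise: either a $G$-orbit of vertical components of $\lfloor B\rfloor_{\rm ver}$ over $\bar c$ joins the two horizontal round components, or the horizontal round components meet over $\bar c$. In the former subcase, take such a $G$-orbit $E\subset \lfloor B\rfloor_{\rm ver}$ and run a $G$-equivariant MMP for $(X,B-\epsilon E)$ over $C$; in the latter, first take a $G$-equivariant dlt modification $(Y,B_Y)\to (X,B)$ over $\bar c$, pick a $G$-orbit $E$ of components of $\lfloor B_Y\rfloor$ mapping to $\bar c$, and then run the $G$-equivariant MMP for $(Y,B_Y-\epsilon E)$. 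Either way we arrive at a $G$-equivariant $(Z,B_Z)$ over $C$ with $\rho^G_{\bar c}(Z/C)=1$ and $B_Z$ reduced with at least three $G$-prime components over $\bar c$, so $\gamma^G_{\bar c}(Z,B_Z)=0$, and Theorem~\ref{thm:G-locally-toric-surface} gives local toricity. The induced birational map $X\dashrightarrow Z$ is an isomorphism outside $\pi^{-1}(\bar c)$, as all contracted curves lie in that preimage.

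The main obstacle I expect is the bookkeeping needed to keep every step $G$-equivariant and to verify that the $G$-invariant relative Picard rank over $\bar c$ drops to exactly $1$: one must check that each contracted $G$-orbit of curves forms a single extremal $G$-invariant class, and that the resulting $G$-invariant decomposition of $B_Z$ over $\bar c$ has enough irreducible summands to make $\gamma^G_{\bar c}$ vanish. Once these equivariant refinements of the surface MMP are recorded, the complexity computation and Theorem~\ref{thm:G-locally-toric-surface} do all the geometric work.
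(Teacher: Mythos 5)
Your argument follows the same outline as the paper's proof: in each case, take a $G$-equivariant $\qq$-factorial dlt modification (or work with the original pair), run a $G$-equivariant MMP over $C$ after subtracting an $\epsilon$ multiple of a vertical $G$-prime divisor, observe that the $G$-invariant complexity over the orbit drops to zero, and then appeal to Theorem~\ref{thm:G-locally-toric-surface}. If anything, you are slightly more careful than the printed text, which at the key step writes the ordinary complexity $\gamma_c(Z,B_Z)=0$ and cites Proposition~\ref{prop:locally-toric-morphism}; your formulation with $\gamma^G_{\bar c}$ and Theorem~\ref{thm:G-locally-toric-surface} is the cleaner and evidently intended route, since a $G$-equivariant MMP only controls the $G$-invariant relative Picard rank, not the ordinary one.
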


\begin{proof}
Assume that 
${\rm lct}_c((X,B);\pi^*(c))=t>0$.
Denote by $G_b c$ the $G_b$-orbit of $c$ in $C$.
Since this log canonical threshold is positive, then
the two irreducible components of $\lfloor B\rfloor_{\rm hor}$ are disjoint over $G_b c$.
Furthermore, the log pair
$(X,B+t\pi^*(G_b c))$ is a $G$-equivariant strictly log canonical pair.
This pair is log Calabi--Yau (up to shrinking around $G_b c$).
Let $(Y,B_Y)$ be a $G$-equivariant dlt modification of $(Y,B+t\pi^*(G_b c))$ over $C$.
Then, $B_Y$ contains a $G$-prime vertical curve with coefficient one mapping to $G_bc$.
We call such curve $E$.
Note that $E$ may be reducible.
Furthermore, 
${\rm coeff}_P(B_Y)>0$, for every prime divisor $P$ mapping to $c$.
We run a $G$-equivariant minimal model program for $(Y,B_Y-\epsilon E)$ over $C$,
where $\epsilon$ is a positive small rational number.
This minimal model program terminates with a $G$-equivariant model $(Z,B_Z-\epsilon E_Z)$ over $C$.
Here, $B_Z$ is the push-forward of $B_Y$ to $Z$ and $E_Z$ is the push-forward of $E$ to $Z$.
Note that $(Z,B_Z)$ is a $\qq$-factorial $G$-equivariant log Calabi--Yau pair with a $G$-equivariant fibration to $C$.
Furthermore, by construction, we have that
$\rho^G_c(Z/C)=1$.
Furthermore, we have that $\gamma_c(Z,B_Z)=0$.
By Proposition~\ref{prop:locally-toric-morphism}, we conclude that $(Z,B_Z)$ is $G$-equivariantly locally toric over $G_b c$.
Note that the morphism $Y\rightarrow Z$ is $G$-equivariant and only extract prime divisors with log discrepancy in the interval $[0,1)$ with respect to $(Z,B_Z)$. We conclude that
$(Y,B_Y)$ is $G$-equivariantly locally toric over $G_b c$ in $C$.
Finally, since $Y\rightarrow X$ is a $G$-equivariant contraction over $G_b c$ in $C$, we conclude that
$(X,B)$ is $G$-equivariantly locally toric over the $G_b$-orbit of $c$ in $C$.
Indeed, $Y\rightarrow X$ is just the morphism over $C$ defined by a semiample $G$-invariant toric divisor on $Y$.

Assume that ${\rm lct}_c((X,B);\pi^*(c))=0$.
We can apply the connectedness of the log canonical centers~\cite{HH19}*{Theorem 1.2}. 
We conclude that
either the components of $\lfloor B\rfloor_{\rm hor}$ are connected 
by components of $\lfloor B\rfloor_{\rm ver}$
or the two components of $\lfloor B\rfloor_{\rm hor}$ intersect over $c$.
If the former case happens, then
we can take a $G$-prime effective divisor $E\subset \lfloor B\rfloor_{\rm vert}$ and run a $G$-equivariant minimal model program for $(X,B-\epsilon E)$ over $C$.
It terminates with a model $(Y,B_Y-\epsilon E_Y)$ for which $\gamma_c(Y,B_Y)=0$.
By Proposition~\ref{prop:locally-toric-morphism},
we have that $(Y,B_Y)$ is $G$-equivariantly locally toric over $c$.
On the other hand, if the latter case happens, we take a $G$-equivariant $\qq$-factorial dlt modification $(Y,B_Y)$ of $(X,B)$ over $c\in C$.
Let $E$ be an effective $G$-prime divisor contained in $\lfloor B_Y\rfloor$ which maps to $c$.
We run a $G$-equivariant minimal model program for $(Y,B_Y-\epsilon E)$ over $C$ which terminates with a $G$-equivariant minimal model $(Z,B_Z-\epsilon E_Z)$.
In this $G$-equivariant minimal model, the $G$-invariant divisor $B_Z$ is reduced.
Note that $\gamma_c(Z,B_Z)=0$. Hence,
we conclude that $(Z,B_Z)$ is locally toric over $c\in C$.
Observe that in any case the $G$-equivariant birational map
$X\dashrightarrow Z$ is an isomorphism
over $C\setminus G_b c$.
\end{proof}

\begin{corollary}\label{cor:G-equivariant-birational-locally-toric}
Let $X$ be a surface and $G\leqslant {\rm Aut}(X)$ be a finite automorphism group.
Let $(X,B)$ be a $G$-equivariant log Calabi--Yau pair and $X\rightarrow C$ be a $G$-equivariant fibration to a curve.
Assume that $\lfloor B\rfloor_{\rm hor}$ has two irreducible components over $C$.
Then, there is a crepant $G$-equivariant birational map $X\dashrightarrow Y$ over $C$, such that
$(Y,B_Y)$ is $G$-equivariantly locally toric over $C$.
\end{corollary}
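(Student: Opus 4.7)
The plan is to globalize Proposition~\ref{prop:G-locally-toric-morphism} by applying it at each closed point in a finite $G_b$-invariant ``bad locus'' of $C$, and then composing the resulting modifications.

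First I would identify the bad locus. Let $S\subset C$ be the set of closed points $c$ at which ${\rm lct}_c((X,B);\pi^*(c))=0$. A point $c$ belongs to $S$ exactly when there is a log canonical center of $(X,B)$ lying over $c$; since the only log canonical centers of $(X,B)$ that can project to a proper closed subset of $C$ are the images of the vertical components of $\lfloor B\rfloor$ together with the intersection points of the two horizontal components of $\lfloor B\rfloor_{\rm hor}$, the set $S$ is finite. The $G$-invariance of $B$ and the $G$-equivariance of $\pi$ force $S$ to be stable under $G_b$, so $S$ is a disjoint union of finitely many $G_b$-orbits $O_1,\dots,O_r$.

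Next, for every closed point $c\in C\setminus S$ we have ${\rm lct}_c((X,B);\pi^*(c))>0$, so case (1) of Proposition~\ref{prop:G-locally-toric-morphism} already gives that $(X,B)$ is $G$-equivariantly locally toric over the $G_b$-orbit of $c$. For each orbit $O_i\subset S$, case (2) of the same proposition produces a crepant $G$-equivariant birational transformation $\phi_i\colon X\dashrightarrow X_i$ over $C$ that is an isomorphism outside $\pi^{-1}(O_i)$ and makes the transformed pair $G$-equivariantly locally toric over $O_i$.

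Finally, since the orbits $O_1,\dots,O_r$ are pairwise disjoint and each modification $\phi_i$ is an isomorphism outside $\pi^{-1}(O_i)$, the compositions are well defined: I would iteratively compose $\phi_1,\dots,\phi_r$ to obtain a single crepant $G$-equivariant birational map $X\dashrightarrow Y$ over $C$. At every closed point of $C$, the pair $(Y,B_Y)$ is $G$-equivariantly locally toric over the corresponding $G_b$-orbit: outside $S$ this is inherited from $(X,B)$ through an isomorphism, and over each $O_i$ this is what $\phi_i$ provided (and is preserved by the later $\phi_j$'s since those are isomorphisms over $O_i$). The main point that requires care is verifying that a modification $\phi_j$ performed at the orbit $O_j$ does not spoil the $G$-equivariantly locally toric structure previously achieved over a different orbit $O_i$; this is where the ``isomorphism outside $\pi^{-1}(O_j)$'' clause of Proposition~\ref{prop:G-locally-toric-morphism}(2) is essential.
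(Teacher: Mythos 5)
Your proof is correct and follows essentially the same route as the paper's: apply Proposition~\ref{prop:G-locally-toric-morphism}(1) over the open set where the log canonical threshold is positive, then use Proposition~\ref{prop:G-locally-toric-morphism}(2) to fix each bad $G_b$-orbit, relying on the ``isomorphism outside $\pi^{-1}(O_j)$'' clause to preserve what was already achieved. You have merely unpacked the paper's terse ``we proceed inductively'' into an explicit finiteness/orbit-decomposition and composition argument, which is a welcome clarification rather than a deviation.
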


\begin{proof}
Let $c\in C$ be a point so that
${\rm lct}((X,B);\pi^*(c))>0$, then
$(X,B)$ is $G$-equivariantly locally toric over the $G_b$-orbit of $c$ in $C$ by Proposition~\ref{prop:G-locally-toric-morphism}.
We conclude that $(X,B)$ is $G$-equivariantly locally toric over the open set $U$ of $C$ on which the above log canonical threshold is 
positive.
Assume that there is a unique closed point $c_0$ in $C$ over whose orbit the above log canonical threshold is zero.
 
We have that ${\rm lct}((X,B);\pi^*(c_0))=0$. 
By Proposition~\ref{prop:G-locally-toric-morphism},
there is a crepant $G$-equivariant birational transformation $(Y,B_Y)$ of $(X,B)$ over the $G_b$-orbit of $c_0$ in $C$, which is an isomorphism outside the $G$-orbit of $\pi^{-1}(c)$, and
is $G$-equivariantly locally toric over
the $G_b$-orbit of $c$ in $C$.
Note that after this birational transformation of $(X,B)$, the log pair $(Y,B_Y)$ remains 
$G$-equivariantly locally toric over $U$.
Hence, $Y\rightarrow C$ is $G$-equivariantly locally toric over $C$
for the log pair $(Y,B_Y)$.
In the case that $C$ has several closed points over which the log canonical threshold is zero, we proceed inductively.
\end{proof}

\subsection{Surface fibrations with large fiber-wise automorphisms}
\label{subsec:large-fiber}

In this subsection, we study $G$-equivariant fibrations of Fano type surfaces such that the induced 
automorphism on a general fiber is large.

\begin{proposition}\label{prop:large-fiber}
Let $M$ be a positive integer.
There exists a positive integer $N:=N(M)$, only depending on $M$,
satisfying the following.
Assume that the following conditions hold:
\begin{enumerate}
    \item $X$ is a Fano type surface
    and $G\leqslant {\rm Aut}(X)$ is a finite automorphism group, 
    \item $(X,B)$ is a $G$-invariant log Calabi--Yau pair such that $M(K_X+B)\sim 0$,  
    \item $X\rightarrow C$ is a $G$-equivariant fibration to a curve, and
    \item $G_f$ is abelian with $|G_f|\geq N$.
\end{enumerate}
Then, up to passing to a $G$-equivariant crepant birational model of $(X,B)$, we may assume that $\lfloor B\rfloor_{\rm hor}$ has two prime components.
\end{proposition}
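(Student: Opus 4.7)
The plan is to restrict $(X,B)$ to a general fiber of $\phi$ and apply Lemma~\ref{lem:ab-action-p1}. First I would argue that the general (smooth) fiber $F$ of $\phi$ is isomorphic to $\pp^1$. Restricting $K_X+B\sim_\qq 0$ to $F$ gives $K_F+B|_F\sim_\qq 0$, hence $\deg(B|_F)=2-2g(F)$. Since $B|_F\geq 0$, this forces $g(F)\leq 1$. If $g(F)=1$, then $B|_F=0$, so $B$ is $\phi$-vertical; by Zariski's lemma vertical divisors on a fibered surface have non-positive self-intersection, so $-K_X\sim_\qq B$ could not be big, contradicting the Fano-type hypothesis on $X$. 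Thus $g(F)=0$ and $F\cong \pp^1$.

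Next, the pair $(F, B|_F)$ is $G_f$-invariant, log canonical, and satisfies $M(K_F+B|_F)\sim 0$, so it is a $G_f$-invariant $M$-complement of $\pp^1$. Taking $N\geq N(M)$ with $N(M)$ the threshold furnished by Lemma~\ref{lem:ab-action-p1}, the assumption $|G_f|\geq N$ together with abelianness of $G_f$ gives, up to an automorphism of $F$, that $G_f\cong \zz_{|G_f|}$ acts as multiplication by a primitive root of unity on the open torus, and $B|_F=p_0+p_\infty$ is the reduced toric boundary, where $p_0,p_\infty$ are the two $G_f$-fixed points.

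Let $\sigma_0,\sigma_\infty\subset X$ be the closures of the sections $c\mapsto p_0(c),\,p_\infty(c)$ defined over the dense open subset of $C$ above which $F\cong\pp^1$. These are two distinct horizontal prime divisors, each mapping birationally onto $C$ and each contained in the pointwise fixed locus $X^{G_f}$. Any horizontal prime component $H$ of $\operatorname{Supp}(B)$ restricts on a general fiber to a nonempty subset of $\{p_0,p_\infty\}$; since $H$ is $G_f$-invariant and $G_f$ fixes these two points individually, $H$ is pointwise fixed by $G_f$ and is therefore a horizontal component of $X^{G_f}$, forcing $H\in\{\sigma_0,\sigma_\infty\}$. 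From $\sigma_i\cdot F=1$ at $p_i$ and $\operatorname{coeff}_{p_i}(B|_F)=1$ we conclude $\operatorname{coeff}_{\sigma_i}(B)=1$, so $\lfloor B\rfloor_{\rm hor}=\sigma_0+\sigma_\infty$ has exactly two prime components.

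The main obstacle is the rigorous control of the sections $\sigma_0,\sigma_\infty$ at singular loci of $X$, where they could a priori collide or fail to be distinct well-behaved prime divisors. This is precisely where the ``up to passing to a $G$-equivariant crepant birational model'' clause is used: I would first pass, if necessary, to a $G$-equivariant $\qq$-factorial dlt modification of $(X,B)$. Such a modification is crepant, preserves the $G$-action, the log Calabi--Yau structure, and the value of $M(K_X+B)$, and also the large $G_f$-action on the general fiber; the analysis above then applies verbatim on the new model and yields two distinct horizontal prime components of $\lfloor B\rfloor$.
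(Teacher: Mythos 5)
The main gap is in the sentence beginning ``Let $\sigma_0,\sigma_\infty\subset X$ be the closures of the sections \dots These are two distinct horizontal prime divisors, each mapping birationally onto $C$.'' That claim is unjustified and is essentially the whole content of the proposition. Over a general fiber $F$, the group $G_f$ does fix the two points $p_0,p_\infty$ individually, but there is no canonical way to label them coherently as $c$ varies in $C$: going around a loop in the base can swap $p_0$ and $p_\infty$. In that case your ``$\sigma_0$'' and ``$\sigma_\infty$'' coincide as sets, $\lfloor B\rfloor_{\rm hor}$ is a single irreducible curve $C_0$ mapping $2{:}1$ onto $C$, and the conclusion fails on the given model. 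Your later inference ``$H$ is pointwise fixed by $G_f$, hence $H\in\{\sigma_0,\sigma_\infty\}$'' only places the horizontal boundary inside the union $\sigma_0\cup\sigma_\infty$; it does not show that this union has two components. The dlt modification you append does not help either: it repairs singularities of $X$ and of the boundary, but it does not split an irreducible bisection into two sections.

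The paper's proof is structured precisely to kill the bisection case, and that is where the real work is. After the same first reduction (reduced horizontal boundary with $(F,B|_F)\cong(\pp^1,\{0\}+\{\infty\})$), the paper (i) uses a crepant $G$-equivariant extraction plus MMP over $C$ to normalize $C_0:=\lfloor B\rfloor_{\rm hor}$ (this is where the ``up to a crepant model'' clause is actually used, to remove the node at a putative self-intersection, which is an lc center by inversion of adjunction); and then (ii) shows that $C_0\to C$ must be unramified for $|G_f|>2$. The argument for (ii) passes to the normalized base change $Y\to X$ along $C_0\to C$, shows the residual involution $\tau$ is normalized by $g_f$, and, using the locally toric structure from Proposition~\ref{prop:G-locally-toric-morphism}, writes the actions in local coordinates $(t_1,t_2)$ on $D_0\times\mathbb{G}_m$ to see that $g_f\tau g_f^{-1}\in\langle\tau\rangle$ forces $|G_f|\le 2$. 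Once $C_0\to C$ is unramified and $C\cong\pp^1$ is simply connected, $C_0$ must be disconnected, i.e.\ have two components. None of that analysis appears in your proposal, so you have not excluded the case you implicitly assumed away; you would need to supply an argument of this kind before the rest of your reasoning applies.
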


\begin{proof}
First, we reduce to the case in which $B_{\rm hor}$ is a reduced divisor.
We denote by $(F,B_F)$ the restriction of $(X,B)$ to a general fiber $F$.
Note that $M(K_F+B_F)\sim 0$.
By Lemma~\ref{lem:ab-action-p1}, we may assume that $G_f$ is a cyclic group acting as the multiplication by a root of unity on $F\simeq \pp^1$.
Note that the coefficients of $B_F$ are larger than $\frac{1}{2M}$.
Hence, we conclude that for $N\geq 4M$, the pair
$(F,B_F)$ is isomorphic to
$(\pp^1,\{0\}+\{\infty\})$.
Thus, all horizontal components of $B$ are reduced.
Furthermore, $B$ has either one or two horizontal components.

We reduce to the case in which 
$C_0:=B_{\rm hor}$ is normal.
Assume that $C_0$ is not a normal curve.
Since $C_0$ must be a semi-log canonical curve, then it has a node $c\in C_0$.
Note that $c_0 \in X$ is a log canonical center of $(X,B)$ by inversion of adjunction.
Let $c$ be the image of $c_0$ in $C$.
We can extract a $G$-prime log canonical $E$ center over the $G$-orbit of $c_0$ in $X$.
We call $Y\rightarrow X$ such $G$-equivariant birational morphism.
We run a $G$-equivariant minimal model program for
$(Y,B_Y-E)$ over $C$.
We get a model $(Z,B_Z)$.
Note that all the hypotheses on $(X,B)$ are 
satisfied by $(Z,B_Z)$.
Furthermore, the curve $B_{Z,{\rm hor}}$ is normal over $C\setminus c$ and is normal on a neighborhood of the fiber over $c$.
Hence, $B_{Z,{\rm hor}}$ is a normal curve.
Replacing the pair $(X,B)$ with $(Z,B_Z)$, we may assume that
$C_0:=\lfloor B\rfloor_{\rm hor}$ is a normal curve.

Note that we have a finite morphism 
$C_0\rightarrow C$.
We prove that the finite morphism $C_0\rightarrow C$ must be unramified. 
Assume that $C_0\rightarrow C$ ramifies over a point $c\in C$.
Let $c_0$ be the pre-image of $c$ in $C_0$.
The multiplicity index of $C_0\rightarrow C$ at $c$ equals two. 
Otherwise, the boundary $B_F$ has three or more points with coefficient one, leading to a contradiction.
Let $Y\rightarrow X$ be the normalization of the base change induced by $C_0\rightarrow C$.
Hence, over the $G_b$-orbit of $c$ in $C$ the finite morphism $Y\rightarrow X$ is the quotient by an involution $\tau$ on $Y$.
Let $(Y,B_Y)$ be the log pull-back of $(X,B)$ to $Y$.

We claim that $G_f=\langle g_f\rangle$ acts as a group of automorphisms on $(Y,B_Y)$ so that $g_f \tau g_f^{-1}\in \langle \tau \rangle$.
We call $\pi \colon X\rightarrow C$.
Indeed, we have a commutative diagram as follows:
\[
\xymatrix{
X \ar[r]^-{\pi} \ar[d]^-{g_f} & C \ar[d]^-{{\rm id}_C} & C_Y\ar[d]^-{{\rm id}_{C_Y}}
\ar[l]_-{/\tau} \\
X\ar[r]^-{\pi} & 
C & C_Y\ar[l]_-{/\tau}.
}
\]
Hence, we have an induced automorphism $g_f\in {\rm Aut}(Y)$ acting as the multiplication by a root of unity on a general fiber.
Furthermore, we have a commutative diagram:
\[
 \xymatrix{
 Y\ar[d]_-{/\tau}\ar[r]^-{g_f} & Y\ar[d]^-{/\tau} \\ 
 X\ar[r]^-{g_f} & X.}
\]
By abuse of notation, we are calling both automorphisms $g_f$.
Thus, we conclude that the following diagram commutes:
\[
 \xymatrix{
 Y\ar[d]_-{/\tau}\ar[r]^-{g_f^{-1}} & Y\ar[d]^-{/\tau}\ar[r]^-{\tau} & Y\ar[d]^-{/\tau}\ar[r]^-{g_f}& Y\ar[d]^-{/\tau} \\ 
 X\ar[r]^-{g_f^{-1}} & X\ar[r]^-{{\rm id}_X} & X \ar[r]^-{g_f} & X.\\
 }
\]
Since the automorphism on $X$ obtained by composing the bottom horizontal arrows is ${\rm id}_X$, we conclude that the automorphism on $Y$ obtained by composing the upper horizontal arrows is contained in $\langle \tau\rangle$. This proves the claim.

We denote by $G_Y:=\langle g_f,\tau \rangle$.
Note that $B_Y$ has two horizontal irreducible components.
By Proposition~\ref{prop:G-locally-toric-morphism}, we know that $(Y,B_Y)$ is $G_f$-equivariantly locally toric over $c_0$.
Observe that the toric boundary of $Y$ is $G_Y$-invariant, hence $G_Y$ acts as an automorphism group of the complement of the toric boundary.
The complement of the toric boundary, locally over $c_0$, is isomorphic to $D_0\times \mathbb{G}_m$ (where $D_0$ is the punctured disk).
In the coordinates $(t_1,t_2)$ of $D_0 \times \mathbb{G}_m$ the automorphism $\tau $ acts as $(-t_1,\lambda t_2^{-1})$, where $\lambda \neq 0$.
On the other hand, $g_f$ acts as
$(t_1,\mu_{|G_f|}t_2)$ where
$\mu_{|G_f|}$ is a $|G_f|$-root of unity. Thus,
the relation $g_f\tau g_f^{-1}=\langle \tau \rangle$ holds if and only if $|g_f|=|G_f|\leq 2$.
We deduce that for $|G_f|>2$ the finite map $C_0\rightarrow C$ must be unramified.
Since $C$ is isomorphic to $\pp^1$, we conclude that $C_0$ must have two connected components.
\end{proof}

\subsection{Surface fibrations with large base automorphisms}
\label{subsec:large-base}

In this subsection, we study $G$-equivariant fibrations of Fano type surfaces such that the induced automorphism on the base is large.

\begin{proposition}\label{prop:large-base}
Let $M$ be a positive integer.
There exists a positive integer $N:=N(M)$, only depending on $N$,
satisfying the following.
Assume that the following conditions hold:
\begin{enumerate}
\item $X$ is a Fano type surface, 
$G\leqslant {\rm Aut}(X)$ is a finite automorphism group, and $\rho^G(X)=2$,
\item $(X,B)$ is a $G$-invariant log Calabi--Yau pair such that $M(K_X+B)\sim 0$, 
\item $X\rightarrow C$ is a $G$-equivariant fibration to a curve,
\item $G_b$ is an abelian group with $|G_b|\geq N$, 
\item the fibers over $\{0\}$ and $\{\infty\}$ are $G$-prime components of $\lfloor B\rfloor$, and 
\item the second extremal ray of the $G$-equivariant cone of curves define a birational contraction.
\end{enumerate}
Then, $(X,B)$ satisfies one of the following:  
\begin{enumerate}
\item The pair $(X,B)$ is a $G$-equivariant log Calabi--Yau toric pair, or
\item the pair $(X,B)$ admits a $\mathbb{G}_m$-action,
with $H<\mathbb{G}_m\leqslant {\rm Aut}(X,B)$ and $H\leqslant G$ has index bounded by $N$
\end{enumerate}
Furthermore, if we assume that
$G_f$ is abelian with $|G_f|\geq N$, then the first case holds.
\end{proposition}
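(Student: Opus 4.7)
The plan is to leverage the large abelian $G_b$-action on the base $C$ to put $C$ in a standard toric form, and then use the $\rho^G(X)=2$ structure together with either the horizontal boundary or the second extremal contraction to pin down $X$.

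First, I would run the $G$-equivariant canonical bundle formula (Proposition~\ref{prop:G-equiv-cbf}) to obtain a log pair $(C,B_C)$ with $M'(K_C+B_C)\sim 0$, $G_b$-invariant, where $M'$ depends only on $M$. Since $G_b$ is abelian with $|G_b|\geq N$, an argument analogous to Lemma~\ref{lem:ab-action-p1} shows that for $N$ large enough, $C\simeq \pp^1$, $G_b$ embeds as a cyclic subgroup of a one-dimensional torus of $\mathrm{Aut}(\pp^1)$ acting by multiplication by roots of unity, and $B_C=\{0\}+\{\infty\}$. These two fixed points must coincide with the images of the two $G$-prime vertical components of $\lfloor B\rfloor$, and we obtain a $\mathbb{G}_m$-action on $C$ with $G_b<\mathbb{G}_m$.

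Next, I would establish the furthermore statement. Restricting $(X,B)$ to a general fiber $F\simeq \pp^1$ of $X\to C$ gives an $M$-complemented pair carrying the induced abelian action of $G_f$ of order at least $N$. By Lemma~\ref{lem:ab-action-p1}, for $N$ large, $B_F$ is the toric boundary of $\pp^1$, forcing two horizontal $G$-prime components of $\lfloor B\rfloor$. Together with the two vertical $G$-prime components supplied by the hypothesis, the natural $G$-invariant decomposition of $B$ satisfies $|\Sigma|^G\geq 4$. Since $\rho^G(X)=2$ and $\dim X=2$, the $G$-complexity satisfies $\gamma^G(X,B)\leq 2+2-4=0$, and Theorem~\ref{thm:g-inv-comp} forces equality and identifies $(X,\lfloor B\rfloor)=(X,B)$ as a $G$-equivariant log Calabi--Yau toric pair, yielding conclusion~(1).

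For the main statement, where only $G_b$ is known to be large, I would split on the number of horizontal $G$-prime components of $\lfloor B\rfloor$. If two such components exist, the complexity computation above still gives $\gamma^G(X,B)=0$ and conclusion~(1). Otherwise, $\lfloor B\rfloor_{\mathrm{hor}}$ has at most one component, and I would use the birational second extremal ray: let $X\to Y$ be the corresponding $G$-equivariant contraction, so $\rho^G(Y)=1$, and the $G$-prime contracted curve $E$ satisfies $E\cdot(K_X+B)=0$ and therefore meets $B$. A case analysis on whether $E$ is horizontal or vertical over $C$ should force the structure of $X$: a vertical $E$ must sit over a $G_b$-fixed point of $C$ (since $G$-primeness bounds its $G_b$-orbit while $|G_b|$ is large), placing $E$ over $\{0\}$ or $\{\infty\}$, while a horizontal $E$ must be a $G_b$-stable section. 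In either case the $\mathbb{G}_m$-action on $C$ should lift to a $\mathbb{G}_m$-action on $X$ preserving $B$, and the desired subgroup $H\leqslant G$ with $H<\mathbb{G}_m\leqslant \mathrm{Aut}(X,B)$ is extracted via Theorem~\ref{thm:Jordan} combined with Lemma~\ref{lem:subgrp-glnz}, with index bounded only in terms of $M$.

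The main obstacle will be this last step: producing a regular $\mathbb{G}_m$-lift to $X$ when $G_f$ is small and $\lfloor B\rfloor_{\mathrm{hor}}$ has at most one component. Lifting a torus action across the $G$-equivariant birational structure and verifying that it preserves the full (possibly fractional) boundary $B$, rather than only $\lfloor B\rfloor$, requires carefully matching the second extremal contraction with a toric contraction of a Hirzebruch-type surface. The $G$-equivariant MMP together with the fact that $|G_b|$ is large forces the special fibers of $X\to C$ to concentrate over $\{0\}$ and $\{\infty\}$, which should make the lifting geometrically possible; but controlling the fractional horizontal contribution of $B$ and ensuring that no extra $G$-orbit of fractional components obstructs the $\mathbb{G}_m$-equivariance is the delicate bookkeeping to be carried out.
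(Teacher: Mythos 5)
Your proposal diverges from the paper's argument in structure: the paper works throughout with the quotient $(Y,B_Y)=(X,B)/G$ and organizes the proof around three cases on the horizontal part of $\lfloor B\rfloor$ (a single unramified $G$-prime horizontal component; a single ramified one, handled via a double cover; two prime components exchanged by $G$, also handled via a double cover), whereas you try to avoid the quotient and argue directly on $X$ via $G$-complexity and the second extremal contraction. That is a legitimate alternative strategy, but as written it has two genuine gaps.

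First, the ``furthermore'' argument is flawed. You claim that $|G_f|\geq N$ forces \emph{two horizontal $G$-prime components} of $\lfloor B\rfloor$, hence $|\Sigma|^G\geq 4$ and $\gamma^G(X,B)\leq 0$. But Proposition~\ref{prop:large-fiber} only produces two \emph{prime} horizontal components of $\lfloor B\rfloor$; they may be exchanged by $G$ (this is exactly the paper's Case~3), in which case $\lfloor B\rfloor_{\rm hor}$ is a single $G$-prime divisor. Then the natural $G$-invariant decomposition only gives $|\Sigma|^G\geq 3$ and $\gamma^G(X,B)\leq 1$, so Theorem~\ref{thm:g-inv-comp} does not apply and your complexity shortcut collapses. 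The paper closes this case by passing to the normalized double cover $W\to X$ of $Y\to Y\times_{C_Y}C_Z$, proving $(W,B_W)$ is toric, and showing the residual involution acts torically; that mechanism is absent from your sketch.

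Second, and more seriously, the main case (only one or zero horizontal $G$-prime components of $\lfloor B\rfloor$) is exactly where the work lies, and you explicitly defer it. You split on whether the contracted curve $E$ of the second extremal ray is horizontal or vertical; but since the second ray is birational, $X\to C$ is a $G$-Mori fiber space, so $\rho^G(X/C)=1$ and all fibers are $G$-irreducible, hence $E$ is forced to be horizontal (the paper observes this up front). The remaining work — producing a regular $\mathbb{G}_m$-action on $X$ stabilizing \emph{all} of $B$ (not just $\lfloor B\rfloor$), identifying the extra toric prime $P$ in $Y\setminus\lfloor B_Y\rfloor$ over which $X\to Y$ ramifies, checking $P$ is disjoint from $\lfloor B\rfloor_{\rm hor}$ via local complexity of $(X_0,B_0)$, and splitting on whether $n/\gcd(b,n)$ is large or small to decide between conclusions~(1) and~(2), and then separately treating the ramified-cover subcase — is precisely the content of the paper's Cases~1 and~2. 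Your last paragraph names this ``delicate bookkeeping,'' but without it the proposal does not constitute a proof; in particular, Theorem~\ref{thm:Jordan} and Lemma~\ref{lem:subgrp-glnz} cannot by themselves produce the subgroup $H<\mathbb{G}_m$ until one has already established the toric (or torus-action) structure on the model and the concrete coordinates in which $G$ acts.
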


\begin{proof}
Let $F_0$ and $F_\infty$ be the fibers at zero and infinity for the morphism $X\rightarrow C$.
By assumption, $F_0$ and $F_\infty$ are $G$-prime components of $\lfloor B\rfloor$.
Let $X\rightarrow X_0$ be the $G$-equivariant morphism defined by the second extremal ray of the $G$-equivariant cone of curves.
We are assuming it is a birational morphism.
Note that every irreducible component of the exceptional locus of $X\rightarrow X_0$ is horizontal over $C$.
Hence, such components intersect both $F_0$ and $F_\infty$.
We can apply the connectedness of log canonical centers. We conclude that all fibers of $X\rightarrow X_0$ appear in $\lfloor B \rfloor$.
Note that $X\rightarrow X_0$ has at most two irreducible components contained in its exceptional locus.
Furthermore, each such an exceptional curve intersects $F_0$ and $F_\infty$ exactly once.
Otherwise, the induced pair $(X_0,B_0)$ would have negative local complexity at the image of such an exceptional curve.

Consider the following diagram obtained by quotienting by the finite group action:
\[
 \xymatrix{
(X,B) \ar[r]^-{/G} \ar[d]_-{\pi} & (Y,B_Y) \ar[d]^-{\pi_Y} \\
C \ar[r]^-{/G_b} & C_Y. \\ 
 }
\]
Let $(C_Y,B_Y)$ be the pair obtained by the canonical bundle formula.
By assumption, $(C_Y,B_Y)\simeq (\pp^1,\{0\}+\{\infty\})$ and
$\rho(Y)=2$.
Furthermore, all fibers of $\pi_Y$ are irreducible.
We will consider three different cases depending on the number and coefficients of the horizontal components of $B$.

\textit{Case 1:} In this case, we assume that $\lfloor B \rfloor$ has a unique horizontal prime component which is unramified over $C$. 
Then, $B$ has at least one other horizontal component. 
Furthermore, $\lfloor B\rfloor_{\rm hor}$ is $G$-invariant.
Note that $F_0,F_\infty$ and $\lfloor B\rfloor_{\rm hor}$ are $G$-invariant divisors. Then, we have that
$\gamma^G(X,B)<1$ and 
$\gamma(Y,B_Y)<1$.
By~\cite{BMSZ18}*{Theorem 1.2}, we conclude that $(Y,B_Y)$ is a toric pair.
Observe that the finite morphism $X\rightarrow Y$ ramifies over a unique horizontal prime divisor $P$ on $Y\setminus \lfloor B_Y \rfloor$.
Otherwise, we would have that $\gamma(Y,B_Y)<0$.
Indeed, any ramification divisor will appear with coefficient at least $\frac{1}{2}$.
Let $P_Y$ be the image of $P$ on $Y$.
Note that $P_Y$ does not intersect $\lfloor B_Y\rfloor_{\rm hor}$,
otherwise the induced contraction $Y\rightarrow Y_0$ has negative local complexity at the image of $\lfloor B_Y\rfloor_{\rm hor}$.
We conclude that $P_Y$ is the closure of $\lambda \times \mathbb{G}_m \subset \mathbb{A}^1\times \mathbb{G}_m \simeq Y\setminus \lfloor B_Y\rfloor$ on $Y$.
In particular, the log pair 
$(Y,\lfloor B_Y\rfloor+\delta P_Y)$ is log canonical for every $0\leq \delta\leq 1$.

Hence, the pair 
$(X,\lfloor B \rfloor +\epsilon P)$ is anti-nef, log canonical, and $G$-invariant.
Indeed, the push-forward of this pair to $Y$ has the form:
\[
\left(Y,\lfloor B_Y \rfloor + \left(1-\frac{1-\epsilon}{m} \right)P_Y\right).
\]
Here, $m$ is the multiplicity index at $P$.
The above pair has complexity strictly less than $\frac{1}{2}$.
By~\cite{BMSZ18}*{Theorem 1.2}, we conclude that $P_Y$ is a torus invariant divisor.
Hence, $(Y,\lfloor B_Y\rfloor+P_Y)$ is a log Calabi--Yau pair.
We conclude that $(X,\lfloor B\rfloor+P)$
is a $G$-invariant log Calabi--Yau pair.
We set $D:=\lfloor B\rfloor+P$.
Then, $D$ has two $G$-invariant horizontal components.
By Theorem~\ref{thm:g-inv-comp}, we conclude that $(X,D)$ is a log Calabi--Yau toric pair.
If $|G_f|\geq N$, then we have that $(X,B)$ is a log Calabi--Yau toric pair as well.
Indeed, in such a case, the divisors $B$ and $D$ coincide.

We turn to prove the existence of the subgroup $H$.
Let $g_b$ be the generator of $G_b$.
Let $g_0\in G$ be an element whose image on $G_b$ is $g_b$.
Note that the divisors $\lfloor B\rfloor, P,F_0$ and $F_\infty$ are $G$-prime divisors.
Hence $g_0$ acts as 
$g_0\cdot (t_1,t_2) = (\mu^a t_1,\mu^b t_2)$  on the big torus $\mathbb{G}_m^2$.
Here, $\mu$ is a $n:=|G|$-root of unity
and $\mu^a$ is a $|G_b|$-root of unity.
We distinguish two possible cases.
Let $N_0$ as in the statement of Lemma~\ref{lem:ab-action-p1}.
If $n/{\rm gcd}(b,n)$ is larger than $N_0$, then $B=D$ 
and $(X,B)$ is a $G$-equivariant log Calabi--Yau toric pair.
Indeed, in such a case, the only possible horizontal divisors are $\lfloor B\rfloor$ and $P$.
If $n/{\rm gcd}(b,n)$ is bounded by $N_0$, then we can consider the subgroup $H:=\langle g_0^{n/{\rm gcd}(b,n)}
\rangle \leqslant G$.
Note that $H$ acts as $(\mu' t_1,t_2)$ on the big torus.
Furthermore, $H_b$ has bounded $H$ has bounded index on $G$.
Note that $H$ embeds naturally in the horizontal $\mathbb{G}_m$-action on the big torus given by $\lambda \cdot (t_1,t_2)=(\lambda t_1,t_2)$.
Is clear that in this case, we have $H\unlhd \mathbb{G}_mH$.
Note that no horizontal component of $B_{\rm hor}\setminus \lfloor B\rfloor_{\rm hor}$ intersect $\lfloor B\rfloor_{\rm hor}$ non-trivially.
Otherwise, the log pair $(X_0,B_0)$ has a point of negative local complexity.
We conclude that the boundary $B$ must be invariant with respect to the horizontal torus action.
Hence, we have that the log pair $(X,B)$ is $\mathbb{G}_m$-invariant. 

\textit{Case 2:} In this case, we assume that $B$ has a unique horizontal component.
Furthermore, $\lfloor B\rfloor$ has a unique horizontal component which admits a ramified morphism to $C$.
By Proposition~\ref{prop:large-fiber}, this case only happens if $|G_f|\leq N_0$.
Hence, $\lfloor B_Y\rfloor$ has a unique horizontal component $C_Z$ with a ramified morphism to $C_Y$.
Let $Z$ be the normalization of the main component of $Y\times_{C_Y} C_Z$.
Let $(Z,B_Z)$ be the log pull-back of $(Y,B_Y)$ to $Z$.
Note that $Y$ is the quotient of $Z$ by an involution $\tau_Z$.
We have a commutative diagram as follows:
\[
 \xymatrix{
(Z,B_Z) \ar[r]^-{/\tau_Z} \ar[d]_-{\pi_Z} & (Y,B_Y) \ar[d]^-{\pi_Y} \\
C_Z \ar[r]^-{/\tau} & C_Y. \\ 
 }
\]
Observe that all fibers of $Z\rightarrow C_Z$ are irreducible outside the points at zero and infinity.
Note that $\lfloor B_Z\rfloor$ has $k_0+k_\infty+2$ prime components.
Here, $k_0$ is the number of components over zero
and $k_\infty$ is the number of components over infinity. 
Furthermore, $\rho(Z)\leq k_0+k_\infty$.
By Theorem~\ref{thm:char-toric-comp}, we know  that $(Z,B_Z)$ is a log Calabi--Yau toric pair.
Let $W$ be the normalization of the main component of $X\times_Y Z$.
Note that $X$ is the quotient of $W$ by an involution $\tau_W$.
Let $(W,B_W)$ be the log pair obtained by pulling-back $(X,B)$ to $W$.
We obtain a diagram as follows:
\[
 \xymatrix{
(X,B)\ar[d]_-{/G} & (W,B_W)\ar[l]_-{/\tau_W} \ar[d] \\
(Y,B_Y)  & (Z,B_Z)\ar[l]_-{/\tau_Z}. \\ 
 }
\]
Note that $B_W$ is a reduced divisor.
Observe that $X\rightarrow Y$ is unramified over $Y\setminus \lfloor B_Y\rfloor$ 
and $Z\rightarrow Y$ is unramified over
$Y\setminus \lfloor B_Y\rfloor$.
We conclude that $W\rightarrow Z$ is unramified over $Z\setminus \lfloor B_Z\rfloor$.
Thus, the pair $(W,B_W)$ is a log Calabi--Yau toric pair.
Observe that $W$ is equipped with a $\tau_W$-invariant fibration to a curve $C_W$.
By construction, the induced log pair structure in $C_W$ is isomorphic to $(\pp^1,\{0\}+\{\infty\})$.

Let $G_0$ be a cyclic subgroup of $G$ surjecting onto $G_b$.
Let $g_0$ be the generator of $G_0$.
Note that $G_f$ and $G_0$ both act on $W$
as automorphism groups.
Let $(t_1,t_2)$ be the coordinates of the big torus of $W$.
The involution is acting as $\tau\cdot (t_1,t_2)=(-t_1,t_2^{-1})$.
Furthermore, $g_f$ acts as 
$g_f\cdot (t_1,t_2)=(t_1,\mu t_2^{\pm})$ and 
$g_0$ acts as 
$g_0 \cdot (t_1,t_2)=
(\eta^a t_1,\eta^b t_2^{\pm})$.
Here, as usual, $\lambda$ and $\eta$ are roots of unity.
Furthermore, the relation $g_0\tau_W g_0^{-1} \in \langle \tau_W\rangle$ holds.
We conclude that $g_0$ acts as 
$g_0\cdot (t_1,t_2)=(\eta^a t_1,\eta^b t_2^{\pm})$ with $\eta^{2b}=1$.
Then, there exists an index two subgroup $H_0$ of $G_0$ acting as $h_0\cdot (t_1,t_2)=(\eta^{2a} t_1, t_2)$.
Note that $H$ naturally embeds in the horizontal torus $\mathbb{G}_m$ acting as $\lambda \cdot (t_1,t_2)=(\lambda t_1,t_2)$.
Furthermore, 
the $\mathbb{G}_m$-action commutes with $\tau_W$, $g_f$ and $g_b$.
Hence, we conclude that $X$ is endowed with a horizontal $\mathbb{G}_m$-action so that $(X,B)$ is
$\mathbb{G}_m$-invariant.
Furthermore, we have that $H\unlhd \mathbb{G}_m H$, by construction.
Hence, $H$ admits a monomorphism into $\mathbb{G}_m$.

\textit{Case 3:} In this case, we assume that $\lfloor B \rfloor $ has two horizontal components $S_0$ and $S_\infty$
which are not $G$-invariant.
In this case, the $G$-orbit of $S_0$ contains $S_\infty$.
Hence, the effective divisor $\lfloor B_Y\rfloor$ has a unique horizontal component $C_Z$ with a ramified morphism to $C_Y$.
Let $Z$ be the normalization of the main component of $Y\times_{C_Y}C_Z$.
Let $(Z,B_Z)$ be the log pull-back of $(Y,B_Y)$ to $Z$. Note that $Y$ is the quotient of $Z$ by an involution $\tau_Z$.
Analogous to the second case, we deduce that $(Z,B_Z)$ is a log Calabi--Yau toric pair.

Let $W$ be the normalization of the main component of $X\times_Y Z$.
Note that $X$ is the quotient of $W$ by an involution $\tau_W$.
Let $(W,B_W)$ be the log pair obtained by pulling-back $(X,B)$ to $W$.
We know that $W\rightarrow Z$ is unramified over $Z\setminus \lfloor B_Z\rfloor$. Hence $(W,B_W)$ is a log Calabi--Yau toric pair.
Note that the involution $\tau_W$ acts on the log pair $(W,B_W)$.
In particular, it restricts to an involution of $\mathbb{G}_m^2$.
Moreover, since $B$ has two horizontal components over $C$, we conclude that
$\tau_W$ must act on $\mathbb{G}_m^2$ as the multiplication by roots of unity.
Hence, we deduce that $(X,B)$ is a toric pair as well.
Thus, $(X,B)$ is a $G$-equivariant log Calabi--Yau toric pair.
\end{proof}

\section{Proof of the main theorems}

In this section, we prove the two main theorems of this article.
We start by proving the case in which the finite automorphism group has large two generation.
Theorem~\ref{introthm-2} is a particular case of the following theorem for $G$-invariant log pairs.

\begin{theorem}\label{thm:rank-2-automorphism}
Let $\Lambda\subset \qq$ be a set satisfying the descending chain condition with rational accumulation points.
There exists a positive integer $N:=N(\Lambda)$, only depending on $\Lambda$,  satisfying the following.
Let $X$ be a Fano type surface and $\Delta$ a boundary on $X$ such that the following conditions hold:
\begin{enumerate}
    \item $G\leqslant {\rm Aut}(X)$ is a finite subgroup with $g_2(G)\geq N$, 
    \item $(X,\Delta)$ is log canonical and $G$-invariant, 
    \item the coefficients of $\Delta$ belong to $\Lambda$, and 
    \item $-(K_X+\Delta)$ is $\qq$-complemented.
\end{enumerate}
Then, there exists:
\begin{enumerate}
    \item A normal abelian subgroup $A\leqslant G$ of index at most $N$,
    \item a boundary $B\geq\Delta$ on $X$, and 
    \item a $A$-equivariant birational morphism $X\dashrightarrow X'$,
\end{enumerate}
satisfying the following conditions:
\begin{enumerate}
    \item The pair $(X,B)$ is log canonical, 
    $G$-invariant, and $(K_X+B)\sim 0$,
    \item the push-forward of $K_X+B$ to $X'$ is a log pair $K_{X'}+B'$, 
    \item $(X',B')$ is a log Calabi--Yau toric pair, and
    \item there are group monomorphisms
    $A<\mathbb{G}^2_m\leqslant {\rm Aut}(X',B')$.
\end{enumerate}
In particular, $B'$ is the reduced toric boundary of $X'$.
\end{theorem}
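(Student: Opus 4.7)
The plan is to run a $G$-equivariant MMP after producing a bounded $G$-invariant complement, and then dispatch the two possible Mori outputs using the del Pezzo and fibration results built in the preliminaries.

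\textbf{Step 1 (Bounded complement and abelian reduction).} First I would apply Theorem~\ref{thm:g-inv-compl} to the hypotheses on $(X,\Delta)$: since the coefficients of $\Delta$ lie in a DCC set $\Lambda$ with rational accumulation points, $(X,\Delta)$ is $G$-invariant and log canonical, and $-(K_X+\Delta)$ is $\qq$-complemented, there is a $G$-invariant boundary $B\ge \Delta$ with $M(K_X+B)\sim 0$ for some $M=M(\Lambda)$. Next, by the Jordan property (Theorem~\ref{thm:Jordan}) applied to $G\leqslant\operatorname{Aut}(X)$, there is a normal abelian subgroup $A_0\leqslant G$ of index at most $J=J(2)$. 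By Lemma~\ref{lem:k-gen-subgroup} we have $g_2(A_0)\ge g_2(G)/J\ge N/J$, and we may use $A_0$ to run an equivariant MMP.

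\textbf{Step 2 (Equivariant MMP and del Pezzo case).} Run an $A_0$-equivariant MMP on $X$ for $-(K_X+B)$; since $(X,B)$ is log Calabi--Yau (after the complement), this MMP is crepant and we obtain an $A_0$-equivariant birational map $X\dashrightarrow X_1$ terminating either in a Mori fiber space over a curve or in a model with $\rho^{A_0}(X_1)=1$. In the latter case $X_1$ is a $\qq$-factorial Fano type surface with $\rho^{A_0}=1$; one reduces to an actual del Pezzo surface (either directly, or passing to the canonical model of the MMP output), and then Proposition~\ref{prop:g-inv-del-pezzo} applied with $G$ replaced by a suitable bounded-index subgroup of $A_0$ shows that $(X_1,B_1)$ is a log Calabi--Yau toric pair with an abelian characteristic subgroup $A\leqslant A_0$ of bounded index satisfying $A<\mathbb{G}_m^2\leqslant\operatorname{Aut}(X_1,B_1)$. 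Setting $X'=X_1$ gives the conclusion.

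\textbf{Step 3 (Mori fiber space case).} In the remaining case we have an $A_0$-equivariant fibration $X_1\to C$ to a curve, and we analyze the exact sequence $1\to A_{0,f}\to A_0\to A_{0,b}\to 1$ of fiber-wise and base-wise actions. Since $A_0$ is abelian and $g_2(A_0)\ge N/J$, Lemma~\ref{lem:cyclic-2-gen} applied to a cyclic-by-cyclic quotient of $A_0$ yields that both $|A_{0,f}|$ and $|A_{0,b}|$ are bounded below by $N/(JC)$ for some explicit constant. Using Proposition~\ref{prop:G-equiv-cbf} together with Lemma~\ref{lem:ab-action-p1}, we may assume $(C,B_C)\simeq(\pp^1,\{0\}+\{\infty\})$ and that the fibers over $0$ and $\infty$ are $G$-prime components of $\lfloor B\rfloor$. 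Then Proposition~\ref{prop:large-fiber} (applicable thanks to the large abelian $A_{0,f}$) lets us pass, by a crepant $A_0$-equivariant birational modification, to a model where $\lfloor B\rfloor_{\mathrm{hor}}$ has exactly two prime components. Finally, Proposition~\ref{prop:large-base} (whose second ``$\mathbb{G}_m$-action only'' alternative is ruled out by the hypothesis $|A_{0,f}|$ large, according to the last sentence of that proposition) produces a $G$-equivariant crepant model $(X',B')$ which is a log Calabi--Yau toric pair, together with a subgroup $A\leqslant A_0$ of index bounded in terms of $M$ (hence in terms of $\Lambda$) and a monomorphism $A\hookrightarrow\mathbb{G}_m^2\leqslant\operatorname{Aut}(X',B')$. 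Replacing $A$ by the intersection of its $G$-conjugates gives a normal abelian subgroup of bounded index, which is the desired $A$.

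\textbf{Main difficulty.} The delicate part is keeping track, through the MMP and the normalizations hidden inside Propositions~\ref{prop:large-fiber} and~\ref{prop:large-base}, of an abelian subgroup $A$ of uniformly bounded index in $G$ that actually embeds into $\mathbb{G}_m^2$: each reduction (Jordan, MFS decomposition, $\tau_W$-involution quotient) multiplies the index by a universal constant, so one must verify that the total loss depends only on $\Lambda$, and that the final $A$ can be chosen normal in $G$ (for which one replaces it by the intersection of its finitely many $G$-conjugates, which remains of bounded index and abelian).
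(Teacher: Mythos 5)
Your proposal follows essentially the same route as the paper's proof: Jordan property to extract an abelian normal subgroup $G_0\leq G$ of bounded index, a bounded $G$-invariant complement via Theorem~\ref{thm:g-inv-compl}, a $G_0$-equivariant $K$-MMP, and then the dichotomy between the del Pezzo endpoint (Proposition~\ref{prop:g-inv-del-pezzo}) and the conic bundle endpoint (Lemma~\ref{lem:cyclic-2-gen}, Propositions~\ref{prop:large-fiber} and \ref{prop:large-base}, and the $G$-invariant complexity machinery). One small slip worth fixing: you say you run the MMP for $-(K_X+B)$, which is numerically trivial and hence yields nothing; what is actually needed (as in the paper) is a $K$-MMP starting from a $G_0$-equivariant log resolution, whose steps are automatically $(K_X+B)$-crepant and which produces a \emph{smooth} del Pezzo in the point case — passing to the ``canonical model'' would go the wrong direction here, whereas normality of the final $A$ comes for free from the characteristic-subgroup construction in Lemma~\ref{lem:subgrp-glnz}, no intersection of conjugates needed.
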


\begin{proof}
We prove the statement of the theorem in several steps.
We either reduce to the case of del Pezzo surfaces or induce a $G$-equivariant Mori fiber space.
In the latter case, we will have a large $G_f$ and $G_b$ actions. Hence, we can use the techniques developed in subection~\ref{subsec:large-fiber} and subsection~\ref{subsec:large-base} to deduce that in some crepant model, the effective divisor $\lfloor B\rfloor$ has several prime components. Then, we can deduce using the complexity. 

\textit{Step 1:} In this step, we construct an abelian normal subgroup of $G$ of bounded index.
We can apply the Jordan property for finite birational automorphism groups of Fano type varieties~\cite{PS16}.
There exists a normal abelian subgroup $G_0\leqslant G$ whose index is bounded by $J$.
Here, $J$ is a constant that only depends on the dimension of $X$.
By Lemma~\ref{lem:k-gen-subgroup}, we have that $g_2(G_0)\geq g_2(G)/J$.

\textit{Step 2:} In this step, we produce a $G$-equivariant complement and run a $G_0$-equivariant minimal model program.
By Theorem~\ref{thm:g-inv-compl}, we know that there exists a $G$-invariant log canonical $M$-complement
$(X,B)$ for $(X,\Delta)$.
Note that the positive integer $M$ only depends on the set $\Lambda$.
By definition, we have that $(X,B)$ is $G$-invariant, log canonical, and $M(K_X+B)\sim 0$.
In particular, $(X,B)$ is also a $G_0$-invariant log canonical $M$-complement of $(X,\Delta)$.
Let $Y$ be a $G_0$-equivariant log resolution of $(X,\Delta)$.
Let $K_Y+B_Y$ the log pull-back of $K_X+B$ to $Y$.
We may assume that $Y\rightarrow X$ does not extract divisors with log discrepancy larger than one with respect to the pair $(X,\Delta)$. 
Hence, $Y$ remains a Fano type variety.
Observe that the log pair $(Y,B_Y)$ is $G_0$-invariant as well.
We run a $G_0$-equivariant minimal model for $K_Y$.
All the steps of this minimal model program are $G_0$-equivariant crepant contractions for $(Y,B_Y)$.
Since $Y$ is rationally connected, this minimal model program terminates with a $G_0$-equivariant Mori fiber space
$Y'\rightarrow W$.
We denote by $(Y',B_{Y'})$ the push-forward of the log pair $(Y,B_Y)$ to $Y'$.

\textit{Step 3:} In this step, we prove the statement in the case that the $G_0$-Mori fiber space maps to a point.
Assume that this minimal model program terminates with a $G_0$-equivariant Mori fiber space to a point
$Y'\rightarrow W={\rm Spec}(k)$.
Then, we have that $\rho^{G_0}(Y')=1$ and $Y'$ is a smooth Fano type surface.
We conclude that $-K_{Y'}$ is an ample divisor.
Thus, $Y'$ is a $G_0$-equivariant del Pezzo surface.
By Proposition~\ref{prop:g-inv-del-pezzo}, there exists a constant $N_0:=N_0(M)$, so that 
$(Y',B_{Y'})$ is a log Calabi--Yau toric surface, provided that $g_2(G_0)\geq N_0$.
In particular, we have that $K_{Y'}+B_{Y'}\sim 0$.
Thus, we conclude that $K_Y+B_Y\sim 0$ and $K_X+B\sim 0$ holds as well.
Moreover, by Proposition~\ref{prop:g-inv-del-pezzo}, 
there is a characteristic subgroup $A\leqslant G_0$ with a monomorphism 
$A<\mathbb{G}_m^2 \leqslant {\rm Aut}(Y',B_{Y'})$
and the index of $A$ in $G_0$ bounded by $N_0$.
We conclude that $A$ is a normal subgroup of $G$ of index bounded by $N_0J$.
We set $X':=Y'$.
Provided that $N\geq N_0J(2)$, the statement of the theorem holds for the
$A$-equivariant birational map $X\dashrightarrow X'$.

\textit{Step 4:} In this step, we assume that the $G_0$-Mori fiber space maps to a curve.
From now on, we may assume that this minimal model program terminates with a $G_0$-equivariant Mori fiber space to a curve $Y'\rightarrow C$.
We prove that the log pairs induced on the base and the general fiber are log Calabi--Yau toric curves.
Since $C$ is a Fano type curve, we conclude that $C\simeq \pp^1$.
We denote by $F$ the general fiber, which is isomorphic to $\pp^1$.
We have an exact sequence
\[
1\rightarrow G_f \rightarrow G_0 \rightarrow G_b \rightarrow 1,
\]
where $G_f$ is the subgroup of $G$ acting on a general fiber $F$
and $G_b$ is the quotient group of $G$ acting on the base $C$.
We may assume that $G_f$ (resp. $G_b)$ is a cyclic group acting as the multiplication of a root unity on $F\simeq \pp^1$ (resp. $C\simeq \pp^1$).
In particular, $G_0$ is a finite abelian group of rank two.
By Lemma~\ref{lem:cyclic-2-gen}, we conclude that
$\min\{|G_f|,|G_b|\}\geq g_2(G_0)$.
By Lemma~\ref{lem:ab-action-p2}, we conclude that for
$g_2(G_0) \geq 4M$,
the pairs $(F,B_F)$ and $(C,B_C)$ are isomorphic 
to $(\pp^1, \{0\}+\{\infty\})$.

\textit{Step 5:} In this step, we reduce to the case in which the second $G_0$-equivariant extremal contraction of $Y'$ is a fibration.
Assume that the second $G_0$-equivariant extremal contraction of $Y'$ is not a fibration.
Passing to a $G_0$-equivariant crepant birational model of $(Y',B_{Y'})$, we may assume that the fibers over zero and
infinite are $G_0$-prime and contained in $\lfloor B_{Y'}\rfloor$.
By Proposition~\ref{prop:large-base}, we know that $(Y',B_{Y'})$ is a log Calabi--Yau toric pair provided $g_2(G_0)\geq N_1:=N_1(M)$.
In particular, we get that
$K_{Y'}+B_{Y'}\sim 0$ so $K_X+B\sim 0$.
Note that $(Y',B_{Y'})$ is a log Calabi--Yau toric pair and $G_0$ fixes $B_{Y'}$.
Then, we have a monomorphism
$G_0<{\rm Aut}(\mathbb{G}_m^2) \simeq \mathbb{G}_m^2 \rtimes {\rm Gl}_2(\mathbb{Z})$.
By Lemma~\ref{lem:subgrp-glnz}, there exists a characteristic subgroup $A\leqslant G_0$ of bounded index by $l:=l(J)$ so that $A< \mathbb{G}^2_m\leqslant {\rm Aut}(Y',B_{Y'})$.
We conclude that $A\leqslant G$ is a normal subgroup of index bounded by $lJ$
for which $A\leqslant \mathbb{G}^2_m$.
We set $X':=Y'$.
We conclude that the statement of the theorem holds for the 
$A$-equivariant birational map $X\dashrightarrow X'$.

\textit{Step 5:} In this step, we prove the statement in the case that the second $G_0$-equivariant extremal contraction is a fibration.
In this case, we have two $G_0$-equivariant fibrations $\pi_1 \colon Y'\rightarrow C_1$ and $\pi_2 \colon Y'\rightarrow C_2$.
Let $G_{f_i}$ and $G_{b_i}$ be the groups acting on a general fiber and the base of each contraction.
By assumption, we have that 
${\rm min}\{ |G_{f_i}|,|G_{b_i}|\}\geq N$ for each $i$.
Proceeding as in the third step for both contraction, we conclude that the log pairs induced on $C_1$ and $C_2$ are isomorphic to
$(\pp^1,\{0\}+\{\infty\})$.
Applying connectedness of the log canonical centers to $\pi_1$ and $\pi_2$, we conclude that
$B_{Y'}$ has four $G_0$-prime components of coefficient one.
By Theorem~\ref{thm:g-inv-compl}, we conclude that $(Y',B_{Y'})$ is a $G_0$-equivariant toric log Calabi--Yau pair.
In particular, we get that
$K_{Y'}+B_{Y'}\sim 0$ so $K_X+B\sim0$.
Note that $(Y',B_{Y'})$ is a log Calabi--Yau toric pair and $G_0$ fixes $B_{Y'}$.
Then, we have a monomorphism
$G_0<{\rm Aut}(\mathbb{G}_m^2) \simeq \mathbb{G}_m^2 \rtimes {\rm Gl}_n(\mathbb{Z})$.
By Lemma~\ref{lem:subgrp-glnz}, there exists a characteristic subgroup $A\leqslant G_0$ of bounded index by $l:=l(J)$ so that $A< \mathbb{G}^2_m\leqslant {\rm Aut}(Y',B_{Y'})$.
In particular, $A\leqslant G$ is a normal subgroup of index bounded by $lJ$.
We set $X':=Y'$.
We conclude that the statement of the theorem holds for the 
$A$-equivariant birational map $X\dashrightarrow X'$.
\end{proof}

Now, we turn to prove a characterization of Fano type surfaces with large cyclic automorphism groups.
Theorem~\ref{introthm-1} is a particular case of the following theorem for $G$-invariant log pairs.

\begin{theorem}\label{thm:cyclic-automorphism}
Let $\Lambda\subset \qq$ be a set satisfying the descending chain condition with rational accumulation points.
there exists a positive integer $N:=N(\Lambda)$, only depending on $\Lambda$,  satisfying the following.
Let $X$ be a Fano type surface and $\Delta$ a boundary on $X$ such that the following conditions hold:
\begin{enumerate}
    \item $G:=\zz_m\leqslant {\rm Aut}(X)$ with $m \geq N$, 
    \item $(X,\Delta)$ is log canonical and $G$-invariant,
    \item the coefficients of $\Delta$ belong to $\Lambda$, and
    \item $-(K_X+\Delta)$ is $\qq$-complemented.
\end{enumerate}
Then, there exists:
\begin{enumerate}
    \item A subgroup $A\leqslant G$ of index at most $N$,
    \item a boundary $B\geq \Delta$ on $X$, and
    \item a $A$-equivariant birational morphism $X\dashrightarrow X'$,
\end{enumerate}
satisfying the following conditions:
\begin{enumerate}
    \item The pair $(X,B)$ is log canonical, 
    $G$-invariant, and $N(K_X+B)\sim 0$,
    \item the push-forward of $K_X+B$ to $X'$ is a log pair $K_{X'}+B'$, 
    \item $(X',B')$ admits a $\mathbb{G}_m$-action, and
    \item there are group monomorphisms
    $A<\mathbb{G}_m\leq {\rm Aut}(X',B')$.
\end{enumerate}
\end{theorem}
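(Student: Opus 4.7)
The plan is to mirror the strategy used in the proof of Theorem~\ref{thm:rank-2-automorphism}: produce a bounded $G$-invariant complement, then run a $G$-equivariant minimal model program to reduce to either a del Pezzo with cyclic action or a $G$-equivariant Mori fibration over a curve. First I would invoke Theorem~\ref{thm:g-inv-compl} to produce a $G$-invariant log canonical $M$-complement $(X,B)\geq (X,\Delta)$ with $M(K_X+B)\sim 0$, where $M=M(\Lambda,2)$ depends only on $\Lambda$. After taking a $G$-equivariant $\qq$-factorial dlt modification $Y$, I would run a $G$-equivariant MMP for $K_Y$; this is $(Y,B_Y)$-crepant at every step, and since $Y$ is rationally connected it terminates with a $G$-equivariant Mori fiber space $Y'\to W$.

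If $W$ is a point, then $Y'$ is a $G$-equivariant del Pezzo surface and Proposition~\ref{prop:cyclic-inv-del-pezzo} directly supplies the desired $\mathbb{G}_m$-action on $(Y',B_{Y'})$ together with a cyclic subgroup $A\leqslant G$ of bounded index embedding into $\mathbb{G}_m$; we set $X':=Y'$. If $W=C$ is a curve then $C\simeq \pp^1$, and there is an exact sequence $1\to G_f\to G\to G_b\to 1$ in which both $G_f$ and $G_b$ are cyclic, being respectively a subgroup and a quotient of the cyclic group $G$. Since $|G_f|\cdot|G_b|=m\geq N$, at least one factor has order at least $\sqrt{m}$, which for $N$ large enough exceeds the constants appearing in Lemma~\ref{lem:ab-action-p1}, Proposition~\ref{prop:large-fiber} and Proposition~\ref{prop:large-base}.

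If $|G_b|$ is large, Lemma~\ref{lem:ab-action-p1} gives $(C,B_C)\simeq (\pp^1,\{0\}+\{\infty\})$; after extracting the log canonical places lying over $0$ and $\infty$ so that the fibers $F_0$ and $F_\infty$ become $G$-prime components of $\lfloor B_{Y'}\rfloor$, I apply Proposition~\ref{prop:large-base} (treating separately whether the second $G$-extremal contraction is birational or a fibration, as in Steps 5 of the proof of Theorem~\ref{thm:rank-2-automorphism}) to obtain either a $G$-equivariant log Calabi--Yau toric pair, which trivially carries a $\mathbb{G}_m$-action, or an explicit $\mathbb{G}_m$-action on $(X',B')$ containing an index-bounded subgroup of $G$. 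If instead $|G_f|$ is large while $|G_b|$ is small, Proposition~\ref{prop:large-fiber} lets me pass to a $G$-equivariant crepant model on which $\lfloor B\rfloor_{\mathrm{hor}}$ has two prime components, and Corollary~\ref{cor:G-equivariant-birational-locally-toric} then yields a $G$-equivariant crepant model $(X',B')$ which is $G$-equivariantly locally toric over the whole of $C$; the fiber-wise $\mathbb{G}_m$-actions of these local toric charts are canonically determined by the two horizontal components of $\lfloor B' \rfloor$ and thus glue to a global $\mathbb{G}_m$-action on $X'$.

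The main obstacle I expect lies in this last subcase: showing that the local toric $\mathbb{G}_m$-actions really do assemble into a single global $\mathbb{G}_m$-action preserving $B'$, and then extracting a finite-index cyclic subgroup $A\leqslant G$ that embeds into it. Concretely, one has to check that the horizontal $\mathbb{G}_m$ acting on the big torus of each fiber is intrinsically determined by the two components of $\lfloor B'\rfloor_{\mathrm{hor}}$ (so that the choices globalize), and that the conjugation action of $G$ on this $\mathbb{G}_m$ is trivial up to bounded index, analogously to Case~1 of the proof of Proposition~\ref{prop:large-base}. The integral linear equivalence $N(K_X+B)\sim 0$ in the conclusion, rather than merely $\qq$-linear equivalence, comes from enlarging $N$ so that it simultaneously bounds $M=M(\Lambda,2)$ from Theorem~\ref{thm:g-inv-compl} and the various constants produced by the del Pezzo and fibration propositions.
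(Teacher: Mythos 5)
Your proposal tracks the paper's argument closely: you produce a bounded $G$-invariant complement via Theorem~\ref{thm:g-inv-compl}, pass to a $G$-equivariant $\qq$-factorial model and run the $G$-equivariant MMP to land on a $G$-equivariant Mori fiber space, then split into the del Pezzo case (Proposition~\ref{prop:cyclic-inv-del-pezzo}) and the fibered case over $\pp^1$ with the exact sequence $1\to G_f\to G\to G_b\to 1$ forcing one of $|G_f|$, $|G_b|$ to be large. The del Pezzo case and the case of $|G_b|$ large (the horizontal case in the paper) match the paper's Steps 2--4 essentially verbatim.

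Where you diverge is the vertical case, i.e., $|G_f|$ large and $|G_b|$ bounded. You propose to pass to a model on which $\lfloor B\rfloor_{\rm hor}$ has two components via Proposition~\ref{prop:large-fiber}, make it $G$-equivariantly locally toric over all of $C$ via Corollary~\ref{cor:G-equivariant-birational-locally-toric}, and then glue the canonical fiber-wise $\mathbb{G}_m$-actions of the local toric charts into a global $\mathbb{G}_m$-action preserving $B'$. The paper instead establishes the $\mathbb{G}_m$-action differently: it invokes Ambro's theorem (\cite{Amb05}, Theorem 3.3) to deduce that the moduli $b$-divisor of the $G$-equivariant canonical bundle formula is $\qq$-trivial, so that the fibration becomes a product $(F,B_F)$-bundle after a finite \'etale Galois base change $V\to U$ over a dense open $U\subset C$; the fiber-wise $\mathbb{G}_m$-action then descends to $\pi^{-1}(U)$, and the locally toric structure extends it across the special fibers. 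This is exactly the gap you flagged yourself: your gluing claim amounts to the assertion that the fiber-wise torus is intrinsic to the two disjoint horizontal sections and therefore globalizes, but you do not actually verify that the resulting regular map $\mathbb{G}_m\times X'\dashrightarrow X'$ is a morphism, nor that it commutes (up to bounded index) with the $G$-action. Ambro's theorem is what the paper uses to dispose of precisely this: the $\qq$-triviality of the moduli part gives the global trivialization after an \'etale cover, which is stronger than having disjoint sections, and it sidesteps the fiber-by-fiber gluing you are worried about. Your approach could in principle be made to work by arguing that a $\pp^1$-bundle with two disjoint sections is canonically $\mathbb{P}(L_0\oplus L_\infty)$ and hence carries a canonical fiber-wise $\mathbb{G}_m$, but you would still need to show compatibility with the locally toric charts at degenerate fibers and with the $G_f$-action; this is extra work that the paper avoids. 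The paper also isolates the mixed case ($|G_f|$ and $|G_b|$ both large) as a separate Step 6, deferring to the proof of Theorem~\ref{thm:rank-2-automorphism}, whereas your split between ``$|G_b|$ large'' and ``$|G_f|$ large, $|G_b|$ small'' handles it implicitly through the ``furthermore'' clause of Proposition~\ref{prop:large-base}; that is a harmless organizational difference.
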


\begin{proof}
We prove the statement in several steps.
The aim is to reduce the statement to the case in which we have a $G$-equivariant Mori fiber space with a large fiber-wise action.
In such a case, we will use Proposition~\ref{prop:G-locally-toric-morphism} to prove that such $G_f$ is embedded in a fiber-wise $\mathbb{G}_m$-action.

\textit{Step 1:} In this step, we produce a $G$-equivariant complement and run a $G$-equivariant minimal model program.
By Theorem~\ref{thm:g-inv-compl}, we know that there exists a $G$-invariant log canonical $M$-complement
$(X,B)$ for $(X,\Delta)$.
By definition, we have that $(X,B)$ is log canonical, $G$-equivariant, and $M(K_X+B)\sim 0$.
We will denote by $Y$ a $G$-equivariant resolution of singularities of $(X,\Delta)$.
Denote by $K_Y+B_Y$ the log pull-back of the log pair $K_X+B$.
Note that $(Y,B_Y)$ is a log canonical pair which is $G$-invariant.
We run a $G$-equivariant minimal model program for $K_Y$.
This minimal model program terminates with a $G$-equivariant
Mori fiber space $Y'\rightarrow W$.
We denote by $(Y',B_{Y'})$ the push-forward of the log Calabi--Yau pair $(Y,B_Y)$ to $Y'$.
Note that $(Y',B_{Y'})$ is still a log Calabi--Yau pair.

\textit{Step 2:} In this step, we prove the statement in the case that the $G$-Mori fiber space maps to a point.
Assume that this minimal model program terminates with a $G$-equivariant Mori fiber space to a point 
$Y'\rightarrow W={\rm Spec}(k)$.
Then, we have that $\rho^G(Y')=1$ and
$Y'$ is a smooth Fano type surface.
Hence, we have that $-K_{Y'}$ is an ample divisor, 
so $Y'$ is a $G$-equivariant del Pezzo surface.
By Proposition~\ref{prop:cyclic-inv-del-pezzo}, 
there exists a constant $N_0:=N_0(M)$, so that  
$|G|\geq N_0$ implies that
the pair $(Y',B_{Y'})$
is a Calabi--Yau toric surface with a $\mathbb{G}_m$-action.
Moreover, by Proposition~\ref{prop:cyclic-inv-del-pezzo}, there exists a subgroup $A\leqslant G$ of index at most $N_0$
satisfying $A<\mathbb{G}_m^2\leqslant {\rm Aut}(Y',B_{Y'})$.
We set $X':=Y'$.
We conclude that the statement of the theorem holds for the 
$G$-equivariant birational map $X\dashrightarrow X'$.

\textit{Step 3:} In this step, we assume that the $G$-Mori fiber space maps to a curve. We prove that either the base or a general fiber is a log Calabi--Yau toric curve.
From now on, we may assume that this $G$-equivariant minimal model program terminates with a $G$-equivariant Mori fiber space to a curve $C$.
We denote by $Y'\rightarrow C$ this $G$-equivariant Mori fiber space.
Then, given that $C$ is a Fano type curve, we conclude that $C\simeq \pp^1$.
On the other hand, the general fiber $F$ is also isomorphic to $\pp^1$.
We have an exact sequence
\[
1\rightarrow G_f \rightarrow G \rightarrow G_b \rightarrow 1, 
\]
where $G_f$ is the subgroup of $G$ acting on the general fiber $F$
and $G_b$ is the quotient group acting on the base $C\simeq \pp^1$.
Up to an isomorphism, we may assume that $G_f$ (resp. $G_b$) acts as the multiplication by a root of unity on $F$ (resp. $C$).
We denote by $(F,B_F)$ the restriction of our log pair $(Y',B_{Y'})$ to the general fiber.
We denote by $(C,B_C)$ the log pair obtained by the $G$-equivariant canonical bundle formula.
We may assume that $M(K_C+B_C)$ (up to replacing $M$ with a bounded multiple).
Since we are assuming that $|G|\geq N$, 
then one of the above groups must be of order at least $N/2$.
Replacing $N$ with $2N$, we can just assume $\max\{|G_f|,|G_b|\}\geq N$.
If $|G_f|\geq N$
and $N\geq 4M$, 
then $(F,B_F)$ is isomorphic to
$(\pp^1,\{0\}+\{\infty\})$.
On the other hand,
if $|G_b|\geq N$
and $N\geq 4M$,
then $(C,B_C)$ is isomorphic to
$(\pp^1,\{0\}+\{\infty\})$.
We will divide the proof in three cases, the horizontal case,
the vertical case, and the mixed case.
In the horizontal (resp. vertical) case we assume that $|G_f|\geq N$ and $|G_b|\leq N$ (resp. $|G_b|\geq N$ and $|G_f|\leq N$).
Finally, in the mixed case, we assume that both groups are large, i.e., $\min \{ |G_f|,|G_b|\}\geq N$.
It is clear that one of these cases must hold.

\textit{Step 4:}
In this step, we prove the statement in the horizontal case.
This means that $|G_b|\geq N$ and $|G_f|$ is bounded by $N$.
We are assuming that $(C,B_C)\simeq (\pp^1,\{0\}+\{\infty\})$.
By performing a $G$-equivariant crepant birational modification of the pair $(Y',B_{Y'})$ we may assume that the fibers over zero and infinite are contained in $\lfloor B_{Y'}\rfloor$.
By Proposition~\ref{prop:large-base}, we may assume that one of the following conditions are satisfies:
\begin{enumerate}
    \item Both extremal rays of the $G$-equivariant cone of curves define fibrations, 
  \item the pair $(X,B)$ is a log Calabi--Yau toric pair, or
\item the pair $(X,B)$ admits a $\mathbb{G}_m$-action, with $H<\mathbb{G}_m\leqslant {\rm Aut}(X,B)$ and $H\leqslant G$ has index bounded by $N$.
\end{enumerate}
Assume that $(1)$ is satisfied.
In this case, we have two $G$-equivariant fibrations $\pi_1\colon Y'\rightarrow C_1$ and $\pi_2\colon Y'\rightarrow C_2$.
Let $G_{f_i}$ and $G_{b_i}$ be the groups acting on a general fiber and base of each contraction.
If $|G_{f_2}|\geq N$, then we reduce to the vertical case.
We may assume that
${\rm min}\{ |G_{b_1}|,|G_{b_2}|\}\geq N$.
We conclude that the log pairs induced on $C_1$ and $C_2$ are isomorphic to
$(\pp^1,\{0\}+\{\infty\})$.
Applying connectedness of the log canonical centers for $\pi_1$ and $\pi_2$, we conclude that
$B_{Y'}$ has two four $G$-invariant irreducible components of coefficient one.
By Theorem~\ref{thm:g-inv-compl}, we conclude that $(Y',B_{Y'})$ is a $G$-equivariant toric log Calabi--Yau pair.
Furthermore, a subgroup of $G$,
of bounded index, acts as the multiplication by a root of unity of a one-dimensional subtorus of $\mathbb{G}^2_m$.
We set $X':=Y'$.
We conclude that the statement of the theorem holds for the 
$G$-equivariant birational map $X\dashrightarrow X'$. It is clear that the statement holds if $(2)$ or $(3)$ are satisfied.

\textit{Step 5:} In this step, we prove the statement in the vertical case.
We turn to prove the vertical case, i.e., 
the case in which $|A_f|\geq N$
and $(F,B_F)$ is a $G_f$-equivariant pair.
We denote by $\pi \colon Y'\rightarrow C$ the $G$-equivariant Mori fiber space.
As before, we have that $(F,B_F)$ is the log Calabi--Yau toric curve.
By~\cite{Amb05}*{Theorem 3.3}, we conclude that the moduli part of the $G$-equivariant canonical bundle formula is a
$\qq$-trivial birational divisor.
In particular, there exists an open set $U\subset C \simeq \pp^1$ and a Galois cover $V\rightarrow U$, so that 
\[
V\times_U (Y',B_{Y'}) \simeq V\times (F,B_F).
\]
In $V\times (F,B_F)$, we have a $\mathbb{G}_m$-action on the second component.
Since $V\times (F,B_F)\rightarrow (\pi^{-1}(U),B_{\pi^{-1}(U)})$
is an \'etale Galois cover, 
we conclude that $(\pi^{-1}(U),B_{\pi^{-1}(U)})$
has a $\mathbb{G}_m$-action as well.
Hence, we have a $\mathbb{G}_m$-action over $U$ which acts fiber-wise.
We may replace $(Y',B_{Y'})$ with a $G$-equivariant dlt modification $(Y'',B_{Y''})$.
Denote by $E$ the vertical components of $\lfloor B_{Y''}\rfloor$.
Note that $E$ is $G$-invariant.
Hence, we may run a $G$-equivariant
$(K_{Y''}+B_{Y''}-\epsilon E)$-minimal model program over $C$ which terminates with a model $(Z,B_Z-\epsilon E_Z)$ over $C$.
Here, as usual, $E_Z$ is just the push-forward of $E$ to $Z$.
Note that $\mathbb{G}_m$ is still acting fiber-wise over an open set of $C$.
On the other hand, by Theorem~\ref{thm:G-locally-toric-surface} and Proposition~\ref{prop:G-locally-toric-morphism},
we conclude that the fibration $(Z,B_Z)\rightarrow C$ is everywhere $G$-equivariantly locally toric over the base,
i.e., for each point $c\in C$ there is a neighborhood of $c\in C$ over which the morphism is toric.
Hence, we conclude that the $\mathbb{G}_m$-action on the general fibers extends to all fibers.
Thus, the fibration $(Z,B_Z)\rightarrow C$ is just the $\mathbb{G}_m$-quotient of a $\mathbb{G}_m$-action on the pair $(Z,B_Z)$.

Finally, observe that $A_f$ embeds in the $\mathbb{G}_m$ of the general fiber. 
Thus, it suffices to define $X':=Z$, and consider
the $G$-equivariant morphism $X\dashrightarrow X'$ which satisfies all the conditions on the statement of the theorem.

\textit{Step 6:} In this step, we prove the statement in the mixed case.
The mixed case follows from the proof of Theorem~\ref{thm:rank-2-automorphism}. In this case, using the same argument as above, we conclude that the model $(Z,B_Z)$ is indeed a toric pair.
Thus, it suffices to define $X':=Z$ and consider
the $G$-equivariant morphism $X\dashrightarrow X'$ which satisfies all the conditions on the statement of the theorem.
\end{proof}

\section{Fano type surfaces with large fundamental group of the smooth locus}
\label{sec:large-fundamental}

In this section, we turn to prove the first application of the main theorems.
We give a characterization of Fano type surfaces with large fundamental group of the log smooth locus.
First, we recall the definition of the fundamental group of the log smooth locus of a Fano type variety.

\begin{definition}{\em 
Let $(X,\Delta)$ be a simple normal crossing pair with standard coefficients.
Let $\mathcal{X}$ be the smooth orbi-fold which realizes $(X,\Delta)$ as the coarse moduli space of $(\mathcal{X},0)$.
We have a surjection of fundamental groups
\[
\pi_1(X\setminus \Delta)\rightarrow \pi_1(\mathcal{X}).
\]
The kernel of this homomorphism is
generated by elements of the form
$\gamma^n$, where $\gamma$ is the loop around a prime component of $D$ with coefficient $1-\frac{1}{n}$.
We may denote the group
$\pi_1(\mathcal{X})$ by $\pi_1(X,\Delta)$
and call it the {\em fundamental group of the pair }$(X,\Delta)$.
Note that the elements of $\pi_1(X,\Delta)$ correspond to finite covers of $X$ on which the log pull-back of $K_X+\Delta$ remains a log pair.

Given a log pair $(X,\Delta)$ with standard coefficients, we denote by $\pi_1(X,\Delta)$ the fundamental group
$\pi_1(X^0,\Delta^0)$.
Here,
$X^0$ is a big open set of $X$ on which the pair $(X,\Delta)$ has simple normal crossing singularities and $\Delta^0$ is the restriction of $\Delta$ to $X^0$.
It is clear that the above definition is independent of $X^0$.

Given an effective divisor $\Delta$ on a variety $X$, we denote by $\Delta_s$ the {\em lower standard approximation of $\Delta$}. This is the largest effective divisor $\Delta_s$ with standard coefficients such that
$\Delta_s\leq \Delta$.
We define $\pi_1(X,\Delta)$ to be $\pi_1(X,\Delta_s)$.
Note that $\pi_1(X,\Delta)$ corresponds to finite covers of $X$ on which the log pull-back of $K_X+\Delta$ remains a log pair.
}
\end{definition}

The following theorem is folklore after the finiteness on the fundamental group of the smooth locus of a weak Fano surface~\cites{FKL93,GZ94,GZ95,KM99}
and the BAB conjecture in dimension two~\cite{Ale94}.

\begin{theorem}\label{thm:finiteness-fg}
Let $X$ be a Fano type surface and $B$ a boundary on $X$ such that $(X,\Delta)$ is klt and $-(K_X+\Delta)$ is $\qq$-complemented.
Then, $\pi_1(X,\Delta)$ is a finite group.
\end{theorem}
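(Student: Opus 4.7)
The plan is to derive the statement from the three ingredients already cited: the finiteness of $\pi_1(X^{\text{sm}})$ for klt Fano type surfaces (Fujiki--Koll\'ar--Lawson, Gurjar--Zhang, Keel--McKernan), Birkar's boundedness of complements, and Alexeev's BAB in dimension two.

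First, I would replace $\Delta$ by a bounded complement. Applying Theorem~\ref{thm:g-inv-compl} with $G$ trivial and $\Lambda$ the (finite) set of coefficients of $\Delta$, one obtains an integer $M$ and a boundary $B\geq\Delta$ with $(X,B)$ log canonical and $M(K_X+B)\sim 0$. The coefficients of $B$ then lie in $(1/M)\zz\cap[0,1]$, a finite set, and because $(X,\Delta)$ is klt with $\Delta\leq B$, the lower standard approximation $\Delta_s=\sum(1-1/n_i)P_i$ has every exponent $n_i$ bounded in terms of $M$. Thus the orbifold structure governing $\pi_1(X,\Delta)$ has uniformly bounded ramification indices.

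Second, I would reduce to the smooth locus. Since $(X,\Delta)$ is klt and $\Delta\geq 0$, the underlying variety $X$ is a klt Fano type surface, so by the cited works $\pi_1(X^{\text{sm}})$ is finite. By definition $\pi_1(X,\Delta)=\pi_1(X,\Delta_s)$ fits into a surjection
\[
\pi_1(X,\Delta)\twoheadrightarrow \pi_1(X^{\text{sm}}),
\]
obtained by killing the loops $\gamma_i$ around the components $P_i$ of $\Delta_s$; the kernel is the normal closure of these $\gamma_i$, and each $\gamma_i$ is $n_i$-torsion in $\pi_1(X,\Delta)$. To control this kernel, I would apply Alexeev's BAB in dimension two to the lc pair $(X,B)$: it lies in a log-bounded family, so the number of prime components of $B_s$, and hence of $\Delta_s$, is uniformly bounded. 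So the kernel is normally generated by boundedly many elements of bounded order.

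The main obstacle is the last step: a priori the normal closure of finitely many torsion elements in a finitely generated group need not be finite. I would circumvent this by working with the profinite (\'etale) version of the fundamental group throughout, using that finite covers of the orbifold $(X,\Delta_s)$ are in natural bijection with log pairs $(Y,\Delta_Y)$ admitting a finite morphism to $(X,\Delta)$ with $K_Y+\Delta_Y=\pi^*(K_X+\Delta)$. For any such cover the pulled-back $B_Y$ is a bounded lc complement on the Fano type surface $Y$, so by BAB applied to the tower of all such covers simultaneously the varieties $Y$ lie in a bounded family; since $X$ is fixed, the degrees of the covers $Y\to X$ are then uniformly bounded, and hence $\pi_1(X,\Delta)$ is finite.
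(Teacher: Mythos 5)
Your proposal takes a genuinely different route from the paper, and the final step has a gap.

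The paper's proof is short: replace $\Delta$ by its lower standard approximation, run an MMP for $-(K_X+\Delta)$ on the Mori dream space $X$, observe that each step induces a surjection of orbifold fundamental groups, pass to the ample model $Y$ (a weak log del Pezzo with standard boundary), and cite~\cite{FKL93}. The MMP reduction is the whole content: it is exactly what converts the hypothesis ``Fano type'' into ``weak log del Pezzo,'' which is the setting of the cited finiteness result. Your second paragraph already leans on the finiteness of $\pi_1(X^{\rm sm})$ for a klt Fano type surface, but that is not what~\cite{FKL93},~\cite{GZ94},~\cite{GZ95},~\cite{KM99} prove in the generality you quote them; they treat log del Pezzos (or variants with $-K$ nef and big), and the reduction to that case is precisely the MMP step you skip.

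The more serious issue is the last paragraph. You correctly identify that the ``normal closure of torsion elements'' argument does not work, and you propose instead to bound all covers $Y\to X$ at once via BAB. But the pairs $(Y,B_Y)$ you produce are only lc Calabi--Yau pairs with a uniform $M$-complement, and this is not a bounded class. Alexeev's theorem bounds $\epsilon$-lc weak log del Pezzos with coefficients in a DCC set; it says nothing about lc Calabi--Yau pairs. For a concrete obstruction, the toric log Calabi--Yau pairs $(\mathbb{F}_n,D_n)$, where $D_n$ is the reduced toric boundary of the $n$-th Hirzebruch surface, all carry a $1$-complement and are all lc, yet they form an unbounded family. To salvage the argument you would need to (a) verify that each $Y$ is of Fano type, (b) observe that the pull-back pair $(Y,\Delta_Y)$ is $\epsilon$-lc with $\epsilon=\operatorname{mld}(X,\Delta)>0$, (c) run an MMP on $-(K_Y+\Delta_Y)$ to land in the actual setting of BAB, and then (d) deduce a degree bound for $Y\to X$ from the boundedness of the resulting models — and step (d) requires a volume comparison that degenerates exactly when $-(K_X+\Delta)\equiv 0$, a case permitted by the hypothesis ``$\qq$-complemented.'' In short, the route via bounding all covers can perhaps be completed, but it is longer, needs the same MMP reduction you omitted, and as written the BAB invocation does not apply. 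The paper's single MMP reduction followed by one citation is both more economical and avoids the boundedness discussion entirely.
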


\begin{proof}
Since $X$ is a Fano type surface, then it is a Mori dream space.
Furthermore $X$ has $\qq$-factorial singularities.
Replacing $\Delta$ with its lower standard approximation,
we may assume that $\Delta$ has standard coefficients.
We can run a minimal model program for $-(K_X+\Delta)$.
Let $X\rightarrow X_1\rightarrow\dots\rightarrow X_k$ be the steps of this minimal model program.
Note that for each $i$, we have a surjective homomorphism
$\pi_1(X_{i+1},\Delta_{i+1})\rightarrow
\pi_1(X_i,\Delta_i)$.
Hence, it suffices to prove that
$\pi_1(X_k,\Delta_k)$ is finite, where
$-(K_{X_k}+\Delta_k)$ is nef.
Analogosuly, we have a surjective homomorphism
$\pi_1(Y,\Delta_Y)\rightarrow \pi_1(X_k,\Delta_k)$ where $Y$ is the ample model
of $-(K_{X_k}+\Delta_k)$.
The finiteness of $\pi_1(Y,\Delta_Y)$ follows from~\cite{FKL93}.
\end{proof}

Theorem~\ref{introthm:fundamental} is a particular case of the following theorem for log pairs.

\begin{theorem}\label{thm:fundamental-theorem-1}
Let $\Lambda\subset \qq$ be a set satisfying the descending chain condition with rational accumulation points.
There exists a positive integer $N:=N(\Lambda)$,
only depending on $\Lambda$, satisfying the following.
Let $X$ be a Fano type surface.
Let $\Delta$ be a boundary on $X$
with coefficients in $\Lambda$, such that $(X,\Delta)$ is klt, and $-(K_X+\Delta)$ is $\qq$-complemented.
If $G_0\leqslant \pi_1(X,\Delta)$ satisfies 
$g_2(G_0)\geq N$,
then $X$ is log crepant toric quotient.
Furthermore, there exists a finite cover of $(X,\Delta)$ of degree at most $N$ which is log crepant toric.
\end{theorem}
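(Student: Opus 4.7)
The strategy is to pass to the universal cover of the orbifold pair $(X,\Delta)$, apply Theorem~\ref{thm:rank-2-automorphism} there, and then descend to the desired cover of $X$. First, the group $\Pi:=\pi_1(X,\Delta)$ is finite by Theorem~\ref{thm:finiteness-fg}, and by Lemma~\ref{lem:g2-supgroup} the hypothesis $g_2(G_0)\geq N$ propagates to $g_2(\Pi)\geq N$, so I may replace $G_0$ by $\Pi$ throughout. Let $\pi\colon\tilde X\to X$ be the universal cover of $(X,\Delta_s)$, a Galois cover with group $\Pi$, and define $\tilde\Delta$ by $K_{\tilde X}+\tilde\Delta=\pi^*(K_X+\Delta)$. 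A Hurwitz-type computation analogous to Proposition~\ref{prop:hurwitz}, applied to the non-standard part of $\Delta$, shows that the coefficients of $\tilde\Delta$ lie in a DCC set $\Lambda'$ depending only on $\Lambda$ and that $(\tilde X,\tilde\Delta)$ remains klt. Since $\pi$ is quasi-\'etale, $\tilde X$ is a Fano type surface, and pulling back a $\mathbb{Q}$-complement produces one for $-(K_{\tilde X}+\tilde\Delta)$.

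Next, I would apply Theorem~\ref{thm:rank-2-automorphism} to $(\tilde X,\tilde\Delta)$ equipped with the deck action of $\Pi$, taking $N:=N(\Lambda')$. This produces a normal abelian subgroup $A\leqslant\Pi$ of index at most $N$, a boundary $B\geq\tilde\Delta$, and an $A$-equivariant crepant birational map $\tilde X\dashrightarrow X'$ with $(X',B')$ a log Calabi--Yau toric pair satisfying $A<\mathbb{G}_m^2\leqslant\operatorname{Aut}(X',B')$. Setting $Y:=\tilde X/A$, the natural map $Y\to X$ is finite of degree $[\Pi:A]\leq N$. Because $A$ acts on $X'$ through its torus, the quotient $X'/A$ is again a toric variety, with torus $\mathbb{G}_m^2/A$, and the $A$-equivariant crepant map descends to a crepant birational map $Y\dashrightarrow X'/A$. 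Hence $Y$ is log crepant toric, establishing the bounded-degree cover claim.

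For the conclusion that $(X,B_X)$ is a log crepant toric quotient, where $B_X$ is the pushforward of $B$ to $X$, I would exploit that $A\trianglelefteq\Pi$, so that $\Pi/A$ acts on $Y$ with quotient $X$. Transferring this action across the crepant equivalence $Y\dashrightarrow X'/A$ yields a birational $\Pi/A$-action on the toric variety $X'/A$. A $\Pi/A$-equivariant common resolution followed by a $\Pi/A$-equivariant toric minimal model program then produces a crepant toric model $(T,B_T)$ on which $\Pi/A$ acts by automorphisms of the pair, so that $(X,B_X)$ is crepant to the quotient $(T,B_T)/(\Pi/A)$, exactly as required by the definition of a crepant toric quotient.

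The main obstacle will be this last step: upgrading the a priori only birational $\Pi/A$-action on $X'/A$ to a biregular action on a crepant toric model. The normality of $A$ in $\Pi$ ensures that $\Pi$ normalizes the closure of $A$ inside $\operatorname{Aut}(X',B')$, hence normalizes $\mathbb{G}_m^2$ in a birational sense; combined with equivariant resolution of singularities and the $G$-equivariant toric MMP in dimension two developed earlier in the paper, this should allow one to realize $T$ explicitly. A secondary technical point is the careful Hurwitz-type tracking of coefficients needed to guarantee that $N$ depends only on $\Lambda$ and not on the discrete invariants of the cover.
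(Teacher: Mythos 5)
Your overall strategy --- pass to the (finite) orbifold universal cover, apply Theorem~\ref{thm:rank-2-automorphism} there, and descend --- is exactly the paper's. The argument for the bounded-degree cover (quotient the $A$-equivariant toric model $(X',B')$ by $A<\mathbb{G}_m^2$ so that the quotient stays toric) also matches the paper word for word. The difference, and the place where you run into the obstacle you honestly flag, is in deducing the first conclusion that $X$ itself is a log crepant toric quotient.

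You only invoke the literal statement of Theorem~\ref{thm:rank-2-automorphism}, which gives an $A$-equivariant crepant map to a toric model. This forces you to transport the residual $\Pi/A$-action birationally to $X'/A$ and then try to upgrade it to a biregular action on some crepant toric model via equivariant resolution and MMP. As you note, that upgrade is the crux, and it is genuinely problematic as stated: a $\Pi/A$-equivariant resolution of $X'/A$ need not be toric unless $\Pi/A$ already acts torically, and after an equivariant MMP you would still have to prove the resulting model is toric (say, by a complexity count), which you do not do. The paper sidesteps the whole issue by taking the crepant birational map $Y\dashrightarrow Y'$ to be $G$-equivariant (not merely $A$-equivariant), so that $(Y',B_{Y'})$ is a $G$-invariant toric pair and $(X,B)=(Y,B_Y)/G$ is directly crepant to the toric quotient $(Y',B_{Y'})/G$. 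This stronger equivariance is available because in the proof of Theorem~\ref{thm:rank-2-automorphism} the resolution and the $K$-MMP are run equivariantly with respect to the normal abelian Jordan subgroup $G_0\trianglelefteq G$; since $K_Y$ is $G$-invariant and $G_0$ is normal, the same resolution and MMP can be taken $G$-equivariant, which is what the paper's proof implicitly uses. If you adopt that observation, your first step collapses to one line and the obstacle disappears.

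A secondary point: you propose to track coefficients upstairs via a Hurwitz analysis on $(\tilde X,\tilde\Delta)$, producing a DCC set $\Lambda'$. This can be made to work but is fussier than necessary. The paper instead fixes an $M$-complement $(X,B)$ of $(X,\Delta)$ \emph{downstairs} first and then pulls it back to $(Y,B_Y)$; since $M(K_X+B)\sim 0$ and the cover only ramifies along the standard part of $\Delta$ with indices bounded in terms of $M$, the coefficients of $B_Y$ land in a finite set depending only on $M$ (hence only on $\Lambda$), and one applies Theorem~\ref{thm:rank-2-automorphism} directly to the resulting $G$-invariant complement. This gives the dependence of $N$ on $\Lambda$ with no separate DCC bookkeeping.
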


\begin{proof}
By Theorem~\ref{thm:finiteness-fg}, we know that the fundamental group
$G:=\pi_1(X,\Delta)$ is finite.
By Lemma~\ref{lem:g2-supgroup}, we know that $G$ satisfies $g_2(G)\geq N$.
Let $X^0\subset X$ be a big open subset so that
$\pi_1(X,\Delta)=\pi_1(X^0,\Delta_s^0)$ where
$\Delta_s$ is the lower standard approximation of $\Delta^0$.
Let $(Y^0,\Delta^0)$ be the universal cover of $\pi_1(X^0,\Delta_s^0)$.
Define $Y$ to be normal closure of $X$ on the field of fractions of $Y^0$.
We have an automorphism action of $G$ on $Y$.
Note that the action of $G$ on $Y$ is just the regularization of the birational action of $G$ on $Y^0$ as in~\cite{Che04}.
By construction, the pull-back of $(X,\Delta)$ to $Y$ is a log pair.
Let $(X,B)$ be a $M$-complement for $(X,\Delta)$.
Then, the pull-back $(Y,B_Y)$ of $(X,B)$ to $Y$ is a $G$-invariant $M$-complement for the log pair $(Y,\Delta_Y)$.
By Theorem~\ref{thm:rank-2-automorphism}, we deduce that $(Y,B_Y)$ is a crepant toric pair.
Hence, $(X,B)$ is a crepant toric quotient pair.
So $X$ is a log crepant toric quotient variety.

Let $Y\dashrightarrow Y'$ be a $G$-equivariant birational morphism, crepant for $(Y,B_Y)$, such that $(Y',B_{Y'})$ is a $G$-invariant log Calabi--Yau toric pair.
By Theorem~\ref{thm:rank-2-automorphism}, we know that there exists a normal abelian subgroup $A\leqslant G$ of index at most $N$ so that 
$A< \mathbb{G}_m^2$.
Let $(Z,B_Z)$ be the quotient of $(Y,B_Y)$ by $A$.
Then, $(Z,B_Z)$ is log crepant to the quotient $(Z',B_{Z'})$ of $(Y',B_{Y'})$ by $A$.
Since $A$ is a subgroup of the torus of $Y'$, the log pair
$(Z',B_{Z'})$ is still a log Calabi--Yau toric pair.
We conclude that $(Z,B_Z)$ is a cover of $(X,B)$ of degree at most $N$ which is log crepant toric.
\end{proof}

\begin{theorem}
Let $\Lambda\subset \qq$ be a set satisfying the descending chain condition with rational accumulation points.
There exists a positive integer $N:=N(\Lambda)$,
only depending on $\Lambda$, satisfying the following.
Let $X$ be a Fano type surface.
Let $\Delta$ be a boundary on $X$
with coefficients in $\Lambda$, such that $(X,\Delta)$ is klt, and $-(K_X+\Delta)$ is $\qq$-complemented.
If $G_0 \leqslant \pi_1(X,\Delta)$ satisfies $|G_0|\geq N$,
then $(X,\Delta)$ admits a finite cover of degree at most $N$ which has a log crepant torus action.
\end{theorem}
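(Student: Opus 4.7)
The plan is to imitate the argument of Theorem~\ref{thm:fundamental-theorem-1}, using Theorem~\ref{thm:cyclic-automorphism} as the main input in place of Theorem~\ref{thm:rank-2-automorphism}. Since $|G_0|\geq N$, the group $G:=\pi_1(X,\Delta)$ is finite (Theorem~\ref{thm:finiteness-fg}) and satisfies $|G|\geq N$. I will construct the universal cover $\widetilde{Y}\to X$ exactly as in the proof of Theorem~\ref{thm:fundamental-theorem-1}: $\widetilde{Y}$ is a Fano type surface on which $G$ acts regularly, and the pullback $(\widetilde{Y},B_{\widetilde{Y}})$ of a $G$-invariant $M$-complement of $(X,\Delta)$ (from Theorem~\ref{thm:g-inv-compl}) is a $G$-invariant $M$-complement of $(\widetilde{Y},\Delta_{\widetilde{Y}})$.

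By Theorem~\ref{thm:Jordan} applied to $G\leqslant {\rm Aut}(\widetilde{Y})$, there is an abelian normal subgroup $A\trianglelefteq G$ of index at most a constant $J=J(2)$, so $|A|\geq N/J$. For $N$ sufficiently large, the rank bound for large finite abelian automorphism groups of Fano type surfaces (recalled in the introduction) forces $A$ to have rank at most two, so write $A\simeq \zz_{n_1}\oplus \zz_{n_2}$ with $n_1\mid n_2$ (the case $n_1=1$ being cyclic). The main dichotomy is on $n_1$. If $n_1\geq N^{1/3}$, then $g_2(A)\geq n_1$ and Lemma~\ref{lem:g2-supgroup} gives $g_2(G)\geq N^{1/3}$, so Theorem~\ref{thm:fundamental-theorem-1} applied with subgroup $G$ produces a cover of $(X,\Delta)$ of degree at most $N^{1/3}\leq N$ that is log crepant toric; such a cover admits a log crepant torus action via the $\mathbb{G}_m^2$-action on its toric model.

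Otherwise $n_1<N^{1/3}$, and the subgroup $C:=A^{n_1}$ of $n_1$-th powers in $A$ is cyclic, isomorphic to $\zz_{n_2/n_1}$, of order at least $N^{1/3}/J$. Being characteristic in $A$ and $A$ being normal in $G$, the subgroup $C$ is normal in $G$. I apply Theorem~\ref{thm:cyclic-automorphism} to $(\widetilde{Y},\Delta_{\widetilde{Y}})$ with cyclic automorphism group $C$, which is legitimate once $N^{1/3}/J$ exceeds the threshold of that theorem. This produces a subgroup $A'\leqslant C$ of index at most a constant $N_3=N_3(\Lambda)$, a crepant $A'$-equivariant birational map $\widetilde{Y}\dashrightarrow \widetilde{Y}'$, and a $\mathbb{G}_m$-action on $(\widetilde{Y}',B_{\widetilde{Y}'})$ with $A'<\mathbb{G}_m$. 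Since $A'$ is a subgroup of the cyclic group $C$, it is characteristic in $C$, hence normal in $G$. Therefore $Z:=\widetilde{Y}/A'$ is a Galois cover of $X$ of degree
\[
[G:A']=[G:A]\,[A:C]\,[C:A']\leq J\cdot n_1^2\cdot N_3\leq JN_3N^{2/3}\leq N,
\]
the last inequality valid for $N\geq (JN_3)^3$. The quotient $Z':=\widetilde{Y}'/A'$ inherits a $\mathbb{G}_m\simeq \mathbb{G}_m/A'$-action, the induced map $Z\dashrightarrow Z'$ is crepant, and $(Z,B_Z)$ thus admits a log crepant torus action.

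The main obstacle is that Theorem~\ref{thm:cyclic-automorphism} returns a subgroup $A'$ normal only in the acting cyclic group, whereas to obtain a cover of $X$ (rather than a cover of some intermediate variety) one needs $A'$ normal in all of $G$. The plan circumvents this by descending twice through characteristic subgroups: first to the abelian normal $A\trianglelefteq G$ given by Jordan, then to its characteristic cyclic subgroup $A^{n_1}$; since every subgroup of a cyclic group is characteristic, the final $A'$ is automatically normal in $G$. The threshold $N=N(\Lambda)$ is chosen to dominate the polynomial inequalities collected above, together with the size required for the rank-two structure to apply.
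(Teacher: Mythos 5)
Your proposal is correct but takes a more elaborate route than the paper. The paper's proof applies a direct dichotomy on $g_2(G)$ where $G:=\pi_1(X,\Delta)$: if $g_2(G)$ exceeds the threshold of Theorem~\ref{thm:rank-2-automorphism}, it invokes Theorem~\ref{thm:fundamental-theorem-1}; otherwise, by definition of $g_2$ and $r_N$, the group $G$ contains a cyclic subgroup of index bounded by that threshold, and since $|G|\geq N$ this cyclic subgroup is large enough for Theorem~\ref{thm:cyclic-automorphism} (after adjusting $N$). You instead first pass to a normal abelian subgroup $A\trianglelefteq G$ via the Jordan property, appeal to the rank-two bound for abelian automorphism groups of Fano type surfaces to write $A\simeq\zz_{n_1}\oplus\zz_{n_2}$, and then split cases on $n_1$. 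Both routes work, but yours uses Theorem~\ref{thm:Jordan} and the rank-two classification where the paper's extracts the cyclic subgroup more economically directly from the definition of $g_2$.

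Two minor remarks. First, for $A\simeq\zz_{n_1}\oplus\zz_{n_2}$ with $n_1\mid n_2$, the minimum index of a cyclic subgroup is exactly $n_1$, so $g_2(A)=n_1-1$ rather than $\geq n_1$; this only shifts a constant and does not affect the argument. Second, your concern that $A'$ must be normal in $G$ in order to produce a cover of $X$ is unnecessary: for any subgroup $H\leqslant G$ of the Galois group of the universal cover $\widetilde{Y}\to X$, the quotient $\widetilde{Y}/H\to X$ is already a finite cover of degree $[G:H]$, Galois or not, and the statement only asks for a finite cover. The paper's own proof makes no normality requirement at this step, which lets it skip your double characteristic-subgroup descent entirely. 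Your normality argument (characteristic in cyclic, hence normal in $G$) is nevertheless correct, so this is added generality rather than an error.
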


\begin{proof}
By Theorem~\ref{thm:finiteness-fg}, we know that the fundamental group
$G:=\pi_1(X,\Delta)$ is finite.
Let $X^0\subset X$ be a big open subset so that
$\pi_1(X,\Delta)=\pi_1(X^0,\Delta_s^0)$ where
$\Delta^0_s$ is the lower standard approximation of $\Delta^0$.
Let $(Y^0,\Delta^0)$ be the universal cover of $(X^0,\Delta_s^0)$.
Define $Y$ to be normal closure of $X$ on the field of fractions of $Y^0$.
We have an automorphism action of $G$ on $Y$.
Note that the action of $G$ on $Y$ is just the regularization of the birational action of $G$ on $Y^0$ as in~\cite{Che04}.
By construction, the pull-back of $(X,\Delta)$ to $Y$ is a log pair.
Let $(X,B)$ be a $M$-complement for $(X,\Delta)$.
Then, the pull-back $(Y,B_Y)$ of $(X,B)$ to $Y$ is a $G$-invariant $M$-complement for the log pair $(Y,\Delta_Y)$.
If $g_2(G)\geq N$, then we can apply 
Theorem~\ref{thm:fundamental-theorem-1}.
Otherwise, $G$ contains a cyclic subgroup of bounded index.
By Theorem~\ref{thm:cyclic-automorphism}, we deduce that $(Y,B_Y)$ admits a crepant torus action.
Let $(Y',B_{Y'})$ be the crepant model with a torus action.
There exists a subgroup $A\leqslant G$ of inxed at most $N$ so that
$A< \mathbb{G}_m \leq {\rm Aut}(Y',B_{Y'})$.
We conclude that the quotient of $(Y,B_Y)$ by $A$ is a cover of $(X,B)$ of degree at most $N$ which admits a log crepant torus action.
\end{proof}

\section{Degenerations of klt 3-fold singularities}
\label{sec:larg-class-group}

In this section, we study degenerations of klt $3$-fold singularities provided that their local fundamental group is large enough, i.e.,
it has high rank and order of the generators.

In what follows, we will work with the algebraic local fundamental group instead, since the finiteness of local fundamental groups of klt $3$-fold singularities is not known. See, e.g.~\cite{TX17}, for the terminal case.
The algebraic local fundamental group of a klt singularity finite~\cites{Xu14,Sti17}.

\begin{theorem}\label{thm:large-fundamental-group}
Let $\Lambda\subset \qq$ be a set satisfying the descending chain condition with rational accumulation points.
There exists a positive integer $N:=N(\Lambda)$, only depending on $\Lambda$, satisfying the following.
Let $x\in (X,\Delta)$ be a klt $3$-fold singularity so that the coefficients of $\Delta$ belong to $\Lambda$.
If $G\leqslant \pi^{\rm alg}_1((X,\Delta);x)$ satisfies 
$g_3(G)\geq N$, then $x\in (X,\Delta)$ degenerates to a lctq singularity.
\end{theorem}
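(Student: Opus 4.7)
The plan is to reduce Theorem~\ref{thm:large-fundamental-group} to the surface result Theorem~\ref{thm:rank-2-automorphism} via a $G$-equivariant degeneration of the klt $3$-fold singularity $x\in(X,\Delta)$ to a log Fano cone, whose base is a Fano type surface inheriting a finite automorphism group with large $2$-generation.

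First, I would pass to the Galois $G$-cover $y\in(Y,\Delta_Y)\to x\in(X,\Delta)$ associated to $G\leqslant \pi_1^{\rm alg}((X,\Delta);x)$. By Proposition~\ref{prop:hurwitz}, $(Y,\Delta_Y)$ is a $G$-invariant klt $3$-fold singularity whose boundary coefficients lie in a hyperstandard set $\mathcal{H}(\Lambda)\subset\qq$ satisfying the descending chain condition with rational accumulation points and depending only on $\Lambda$. Since any $G$-equivariant toric (or log crepant toric) degeneration of $y\in(Y,\Delta_Y)$ descends by quotient to an lctq degeneration of $x\in(X,\Delta)$, it suffices to produce such a degeneration upstairs.

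Next, I would invoke the $G$-equivariant K-semistable Fano cone degeneration (Li--Liu--Xu--Blum) for klt singularities: $y\in(Y,\Delta_Y)$ admits a $G$-equivariant degeneration to a log Fano cone singularity $(y_0,Y_0,\Delta_{Y_0})$ carrying a torus $T\cong \mathbb{G}_m$ commuting with the induced $G$-action, the equivariance following from uniqueness of the normalized volume minimizer up to $T$-rescaling. Setting $C:=G\cap T$ (a finite cyclic subgroup) and $G':=G/C$, the exact sequence
\[
1\to C \to G \to G' \to 1
\]
together with Lemma~\ref{lem:g_k-under-quot} gives $g_2(G')\geq g_3(G)\geq N$. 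The base $(S,B_S)$ of the cone is then a Fano type surface equipped with a $G'$-invariant boundary whose coefficients lie in a set depending only on $\mathcal{H}(\Lambda)$, and $-(K_S+B_S)$ is $\qq$-complemented. Applying Theorem~\ref{thm:rank-2-automorphism} (with $N$ enlarged to absorb the constant produced there), $(S,B_S)$ admits a $G'$-equivariant crepant birational map to a log Calabi--Yau toric pair. Re-coning this toric model with the torus $T$ yields a $G$-equivariant log crepant toric degeneration of $(Y,\Delta_Y)$, and taking the $G$-quotient exhibits the required lctq degeneration of $x\in(X,\Delta)$.

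The main obstacle will be the $G$-equivariant Fano cone degeneration step and the clean propagation of numerical data through it: one must justify that $T$ can be chosen to commute with $G$, that the central fiber is an honest log Fano cone over a Fano type surface rather than merely an orbifold cone, and that the boundary coefficients on $(S,B_S)$ stay in a DCC set determined by $\Lambda$ so that the constant $N$ of Theorem~\ref{thm:rank-2-automorphism} is uniform in the input. Once these equivariance and DCC inputs are in place, the passage from the toric cone structure upstairs to the lctq structure downstairs is formal.
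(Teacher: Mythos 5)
Your proposal shares the paper's high-level strategy --- pass to the Galois cover attached to $G \leqslant \pi_1^{\rm alg}((X,\Delta);x)$, reduce to a Fano type surface carrying a residual finite action, drop from $g_3$ to $g_2$ via Lemma~\ref{lem:g_k-under-quot}, and conclude with Theorem~\ref{thm:rank-2-automorphism} --- but the mechanism by which you extract the Fano type surface and realize the degeneration is genuinely different, and there is a real gap in it.

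The paper's proof is more elementary on this point. It takes a plt blow-up $\pi\colon Y\to X$ of $(X,\Delta)$ at $x$ (a Koll\'ar component, which always exists for klt singularities) together with an $M$-complement $(X,B)$ making $E=\pi^{-1}(x)$ a log canonical center, pulls everything back along the universal cover $X'\to X$ to get a plt blow-up $Y'\to X'$ whose exceptional surface $E'$ is Fano type and $G$-fixed, and then applies adjunction to $E'$ to get a $G_{E'}$-invariant $M$-complement. Theorem~\ref{thm:rank-2-automorphism} shows $(E',B_{E'})$ is log crepant toric, the $G$-quotient $(E,B_E)$ is a log crepant toric quotient, and \cite{LX16}*{\S 2.5} --- degeneration of the klt singularity to the cone over the Koll\'ar component --- immediately gives the lctq degeneration. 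Because the blow-up is taken downstairs and then pulled back, $G$-equivariance is automatic and no uniqueness or minimization statement is needed.

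Your route instead invokes a $G$-equivariant K-semistable Fano cone degeneration, which is a much heavier input: the needed uniqueness/equivariance of the normalized-volume minimizer and the existence of the equivariant stable degeneration are deep results, not available in the form you use at the time this paper was written, and in any case avoidable. More seriously, the ``re-coning'' step does not work as stated. Theorem~\ref{thm:rank-2-automorphism} gives a $G'$-equivariant crepant birational map $(S,B_S)\dashrightarrow (S',B_{S'})$ to a toric pair, but a crepant birational map of log Calabi--Yau surfaces does not induce a degeneration, or even a crepant birational equivalence, of the corresponding cone singularities: the cone depends on a choice of polarization, and a crepant birational surface modification need not carry one polarization to another. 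So your claimed ``$G$-equivariant log crepant toric degeneration of $(Y,\Delta_Y)$'' is unjustified. The paper sidesteps this entirely by observing that being a log crepant toric quotient is a property of the pair $(E,B_E)$, and the cone over such a pair is, by definition, an lctq singularity; no further degeneration of cones is required.
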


\begin{proof}
Let $\pi\colon Y\rightarrow X$ be a plt blow-up of $(X,\Delta)$ at $x$.
Let $(X,B)$ be a $M$-complement of $(X,\Delta)$ so that the exceptional divisor of $\pi$ is a log canonical center of $(X,B)$.
Consider the universal cover of $\pi_1^{\rm alg}((X,\Delta);x)$ denoted by $\phi\colon X'\rightarrow X$.
Let $\pi'\colon Y'\rightarrow X'$ be the normalization of the fiber product of the above morphisms.
Note that $\pi'$ is a plt blow-up of $x'\in (X',\Delta')$, where $(X',\Delta')$ is the log pair obtained by pulling-back $(X,\Delta)$ to $X'$.
Furthermore, the log pull-back of $(X,B)$ to $X'$, denote by $(X',B')$, is a $M$-complement.
Note that $x'\in X'$, the pre-image of $x\in X$, is a fixed point for the automorphism group action of $G$ on $X'$.
Hence, $G$ acts as an automorphism group of $Y'$
and it fixes the exceptional divisor $E'$.
By construction, $E'$ is a Fano type surface.
Furthermore, pulling back $(X',B')$ to $Y'$, restricting to $E'$, and applying adjunction,
we obtain a $G_{E'}$-invariant $M$-complement $(E',B_{E'})$.
Here, $G_{E'}$ is the quotient group of $G$ acting as an automorphism group of $E'$.
By Lemma~\ref{lem:g_k-under-quot}, we have that $g_2(G_{E'})\geq N$.
By Theorem~\ref{thm:rank-2-automorphism}, we conclude that $(E',B_{E'})$ is log crepant toric.
We conclude that the quotient $(E,B_E)$ of $(E',B_{E'})$ by $G$ is a log crepant toric quotient.
By~\cite{LX16}*{\S 2.5}, we conclude that $x\in (X,\Delta)$ degenerate to the cone over a log crepant toric quotient.
Thus, $x\in (X,\Delta)$ degenerates to a lctq singularity.
\end{proof}

\begin{theorem}
Let $\Lambda\subset \qq$ be a set satisfying the descending chain condition with rational accumulation points.
There exists a positive integer $N:=N(\Lambda)$, only depending on $\Lambda$, satisfying the following.
Let $x\in (X,\Delta)$ be a klt $3$-fold singularity so that the coefficients of $\Delta$ belong to $\Lambda$.
If $G\leqslant \pi^{\rm alg}_1((X,\Delta);x)$ satisfies 
$g_2(G)\geq N$, then
$x\in (X,\Delta)$ has a finite cover of degree at most $N$ which degenerates to a singularity with a log crepant $\mathbb{G}_m^2$-action.
\end{theorem}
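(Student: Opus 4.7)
The plan is to follow the strategy of Theorem~\ref{thm:large-fundamental-group}, weakening the hypothesis from $g_3(G)\geq N$ to $g_2(G)\geq N$ by invoking Theorem~\ref{thm:cyclic-automorphism} on the exceptional surface of a plt blow-up instead of Theorem~\ref{thm:rank-2-automorphism}, and then producing the second $\mathbb{G}_m$-factor via the cone construction. First I would take a plt blow-up $\pi\colon Y\rightarrow X$ of $(X,\Delta)$ at $x$ with exceptional divisor $E$, chosen so that, for some $G$-invariant $M$-complement $(X,B)$ of $(X,\Delta)$ (with $M=M(\Lambda)$ from Theorem~\ref{thm:g-inv-compl}), $E$ is a log canonical place of $(X,B)$. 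Let $\phi\colon X'\rightarrow X$ be the cover corresponding to $G\leqslant \pi_1^{\rm alg}((X,\Delta);x)$, and set $x':=\phi^{-1}(x)$, which is fixed by $G$. The pull-back plt blow-up $\pi'\colon Y'\rightarrow X'$ has $G$-invariant exceptional divisor $E'$, a Fano type surface; adjunction combined with the pull-back of $(X,B)$ yields a $G_{E'}$-invariant log canonical pair $(E',B_{E'})$ with $M(K_{E'}+B_{E'})\sim 0$.

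Next I would analyze the quotient $G_{E'}$. The kernel of $G\rightarrow G_{E'}$ is cyclic, as it acts on the normal direction to $E'$, so Lemma~\ref{lem:g_k-under-quot} gives $g_1(G_{E'})\geq N$, i.e., $|G_{E'}|\geq N$. By the Jordan property (Theorem~\ref{thm:Jordan}), $G_{E'}$ admits a normal abelian subgroup $A_0$ of index at most $J=J(2)$; and since the rank of a finite abelian automorphism subgroup of a Fano type surface is at most two, $A_0\simeq \zz_{n_1}\oplus \zz_{n_2}$ contains a cyclic subgroup of order $\max(n_1,n_2)\geq \sqrt{|A_0|}\geq \sqrt{N/J}$. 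Choosing $N$ large enough in terms of $\Lambda$ (equivalently, of $M$), this cyclic subgroup exceeds the threshold $N_0(M)$ of Theorem~\ref{thm:cyclic-automorphism}.

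Applying Theorem~\ref{thm:cyclic-automorphism} to $(E',B_{E'})$ with this cyclic subgroup, I would obtain a crepant birational model $(E'',B_{E''})$ of $(E',B_{E'})$, a $\mathbb{G}_m$-action on $(E'',B_{E''})$, and a subgroup $A_{E'}\leqslant G_{E'}$ of index at most $N$ with $A_{E'}<\mathbb{G}_m\leqslant {\rm Aut}(E'',B_{E''})$. Let $A\leqslant G$ be the preimage of $A_{E'}$, so that $[G:A]\leq N$. Invoking~\cite{LX16}*{\S 2.5}, the singularity $x'\in(X',B')$ degenerates, up to log crepant modification, to the cone $C$ over $(E'',B_{E''})$. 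This cone naturally carries a $\mathbb{G}_m^2$-action: the radial $\mathbb{G}_m$ coming from the cone structure together with the $\mathbb{G}_m$ of $(E'',B_{E''})$ lifted along the cone. Taking the quotient by $A$, which acts through one of these factors and hence commutes with both, the finite quotient $C/A$ retains a $\mathbb{G}_m^2$-action and realizes the sought-after degeneration of the cover $X'/A\rightarrow X$, of degree $[G:A]\leq N$, to a singularity with a log crepant $\mathbb{G}_m^2$-action.

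The main obstacle is the log-crepant bookkeeping in lifting the $\mathbb{G}_m$-action on $(E'',B_{E''})$ to a genuine $\mathbb{G}_m^2$-action on a degeneration of a bounded cover of $x\in(X,\Delta)$: one must control the cyclic kernel arising from the normal direction to $E'$, lift the crepant birational modification of the exceptional divisor to a log crepant modification of the cone, and verify that the $A$-quotient preserves both torus factors. The template provided by Theorem~\ref{thm:large-fundamental-group} and the cone degeneration machinery of~\cite{LX16} supply exactly the tools required, with the key new input being the cone-theoretic promotion of a single $\mathbb{G}_m$ to a $\mathbb{G}_m^2$.
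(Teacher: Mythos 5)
Your argument follows the paper's proof almost verbatim: same plt blow-up of $(X,\Delta)$ at $x$, same pullback of the $M$-complement to the cover $X'$, same adjunction to a $G_{E'}$-invariant $M$-complement on the Fano type exceptional surface $E'$, same application of Theorem~\ref{thm:cyclic-automorphism} followed by the Li--Xu cone degeneration and the observation that the cone structure contributes the second $\mathbb{G}_m$-factor. The only difference is that you explicitly justify the passage from the not-necessarily-cyclic group $G_{E'}$ (with $|G_{E'}|\geq N$ via Lemma~\ref{lem:g_k-under-quot}, since the kernel $G\to G_{E'}$ acts on the normal bundle of $E'$ and is therefore cyclic) to a cyclic subgroup of order at least $\sqrt{N/J}$ using Jordan and the bounded rank of finite abelian subgroups of ${\rm Aut}$ of a Fano type surface; the paper applies Theorem~\ref{thm:cyclic-automorphism} to $G_{E'}$ without spelling out this reduction, so your version fills in a small step the paper leaves implicit.
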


\begin{proof}
Let $\pi\colon Y\rightarrow X$ be a plt blow-up of $(X,\Delta)$ at $x$.
Let $(X,B)$ be a $M$-complement of $(X,\Delta)$ so that the exceptional divisor of $\pi$ is a log canonical center of $(X,B)$.
Consider the universal cover of $\pi_1^{\rm alg}((X,\Delta);x)$ denoted by $\phi\colon X'\rightarrow X$.
Let $\pi'\colon Y'\rightarrow X'$ be the normalization of the fiber product of the above morphisms.
Note that $\pi'$ is a plt blow-up of $x'\in (X',\Delta')$, where $(X',\Delta')$ is the log pair obtained by pulling-back $(X,\Delta)$ to $X'$.
Furthermore, the log pull-back of $(X,B)$ to $X'$, denote by $(X',B')$, is a $M$-complement.
Note that $x'\in X'$, the pre-image of $x\in X$, is a fixed point for the automorphism group action of $G$ on $X'$.
$G$ acts as an automorphism group of $Y'$
and it fixes the exceptional divisor $E'$.
By construction, $E'$ is a Fano type surface.
Furthermore, pulling back $(X',B')$ to $Y'$, restricting to $E'$, and applying adjunction,
we obtain a $G_{E'}$-invariant $M$-complement $(E',B_{E'})$.
Here, $G_{E'}$ is the quotient group of $G$ acting as an automorphism group of $E'$.
Observe that $g_1(G_{E'})\geq 1$.
By Theorem~\ref{thm:cyclic-automorphism}, we conclude that $(E',B_{E'})$ admits a crepant torus action.
Furthermore, there is a subgroup $A\leqslant G_{E'}$ of bounded index so that the quotient of $(E',B_{E'})$ by $A$ still admits a torus action.
Let $A_G\leqslant G$ be a normal subgroup of $G$ surjecting onto $A$.
By~\cite{LX16}*{\S 2.5}, the cover of $x\in (X,\Delta)$ corresponding to $A_G$ degenerates to a cone over a pair with crepant $\mathbb{G}_m$-action.
Hence, $x\in (X,\Delta)$ degenerates to a singularity with a crepant $\mathbb{G}_m^2$-action.
\end{proof}

\begin{proof}[Proof of Theorem~\ref{introthm:large-class}]
Note that we have an injective homomorphism
${\rm Cl}(X;x)\rightarrow \pi_1^{\rm alg}((X,\Delta);x)$ induced by the index one cover of Weil $\qq$-Cartier divisors on $X$.
Then, the statement follows from Theorem~\ref{thm:large-fundamental-group}
\end{proof}

\begin{bibdiv}
\begin{biblist}

\bib{Ale94}{article}{
   author={Alexeev, Valery},
   title={Boundedness and $K^2$ for log surfaces},
   journal={Internat. J. Math.},
   volume={5},
   date={1994},
   number={6},
   pages={779--810},
   issn={0129-167X},
   review={\MR{1298994}},
   doi={10.1142/S0129167X94000395},
}

\bib{Amb05}{article}{
   author={Ambro, Florin},
   title={The moduli $b$-divisor of an lc-trivial fibration},
   journal={Compos. Math.},
   volume={141},
   date={2005},
   number={2},
   pages={385--403},
   issn={0010-437X},
   review={\MR{2134273}},
   doi={10.1112/S0010437X04001071},
}

\bib{BCHM10}{article}{
   author={Birkar, Caucher},
   author={Cascini, Paolo},
   author={Hacon, Christopher D.},
   author={McKernan, James},
   title={Existence of minimal models for varieties of log general type},
   journal={J. Amer. Math. Soc.},
   volume={23},
   date={2010},
   number={2},
   pages={405--468},
   issn={0894-0347},
   review={\MR{2601039}},
   doi={10.1090/S0894-0347-09-00649-3},
}
	
\bib{Bir04}{misc}{
  author = {Birkar, Caucher},
  title={Boundedness of $\epsilon$-log Canonical Complements on Surfaces},
  year = {2004},
  note = {https://arxiv.org/abs/math/0409254},
}
		
\bib{Bir16a}{misc}{
  author = {Birkar, Caucher},
  title={Anti-pluricanonical systems on Fano varieties},
  year = {2016},
  note = {https://arxiv.org/abs/1603.05765v3},
}

\bib{Bir16b}{misc}{
  author = {Birkar, Caucher},
  title={Singularities of linear systems and boundedness of Fano varieties},
  year = {2016},
  note = {https://arxiv.org/abs/1609.05543v1},
}
	
\bib{BMSZ18}{article}{
   author={Brown, Morgan V.},
   author={McKernan, James},
   author={Svaldi, Roberto},
   author={Zong, Hong R.},
   title={A geometric characterization of toric varieties},
   journal={Duke Math. J.},
   volume={167},
   date={2018},
   number={5},
   pages={923--968},
   issn={0012-7094},
   review={\MR{3782064}},
   doi={10.1215/00127094-2017-0047},
}

\bib{CD12}{article}{
   author={Cantat, Serge},
   author={Dolgachev, Igor},
   title={Rational surfaces with a large group of automorphisms},
   journal={J. Amer. Math. Soc.},
   volume={25},
   date={2012},
   number={3},
   pages={863--905},
   issn={0894-0347},
   review={\MR{2904576}},
   doi={10.1090/S0894-0347-2012-00732-2},
}
		
\bib{Che04}{article}{
   author={Cheltsov, I. A.},
   title={Regularization of birational automorphisms},
   language={Russian, with Russian summary},
   journal={Mat. Zametki},
   volume={76},
   date={2004},
   number={2},
   pages={286--299},
   issn={0025-567X},
   translation={
      journal={Math. Notes},
      volume={76},
      date={2004},
      number={1-2},
      pages={264--275},
      issn={0001-4346},
   },
   review={\MR{2098998}},
   doi={10.1023/B:MATN.0000036763.78939.9f},
}

\bib{CLS11}{book}{
   author={Cox, David A.},
   author={Little, John B.},
   author={Schenck, Henry K.},
   title={Toric varieties},
   series={Graduate Studies in Mathematics},
   volume={124},
   publisher={American Mathematical Society, Providence, RI},
   date={2011},
   pages={xxiv+841},
   isbn={978-0-8218-4819-7},
   review={\MR{2810322}},
   doi={10.1090/gsm/124},
}

\bib{Cox95}{article}{
   author={Cox, David A.},
   title={The homogeneous coordinate ring of a toric variety},
   journal={J. Algebraic Geom.},
   volume={4},
   date={1995},
   number={1},
   pages={17--50},
   issn={1056-3911},
   review={\MR{1299003}},
}

\bib{CS11a}{article}{
   author={Cheltsov, Ivan},
   author={Shramov, Constantin},
   title={On exceptional quotient singularities},
   journal={Geom. Topol.},
   volume={15},
   date={2011},
   number={4},
   pages={1843--1882},
   issn={1465-3060},
   review={\MR{2860982}},
   doi={10.2140/gt.2011.15.1843},
}

\bib{CS11b}{article}{
   author={Cheltsov, Ivan},
   author={Shramov, Constantin},
   title={Six-dimensional exceptional quotient singularities},
   journal={Math. Res. Lett.},
   volume={18},
   date={2011},
   number={6},
   pages={1121--1139},
   issn={1073-2780},
   review={\MR{2915471}},
   doi={10.4310/MRL.2011.v18.n6.a6},
}

\bib{CS12}{article}{
   author={Cheltsov, Ivan},
   author={Shramov, Constantin},
   title={Nine-dimensional exceptional quotient singularities exist},
   conference={
      title={Proceedings of the G\"{o}kova Geometry-Topology Conference 2011},
   },
   book={
      publisher={Int. Press, Somerville, MA},
   },
   date={2012},
   pages={85--96},
   review={\MR{3076044}},
}

\bib{Dol12}{book}{
   author={Dolgachev, Igor V.},
   title={Classical algebraic geometry},
   note={A modern view},
   publisher={Cambridge University Press, Cambridge},
   date={2012},
   pages={xii+639},
   isbn={978-1-107-01765-8},
   review={\MR{2964027}},
   doi={10.1017/CBO9781139084437},
}

\bib{FKL93}{article}{
   author={Fujiki, Akira},
   author={Kobayashi, Ryoichi},
   author={Lu, Steven},
   title={On the fundamental group of certain open normal surfaces},
   journal={Saitama Math. J.},
   volume={11},
   date={1993},
   pages={15--20},
   issn={0289-0739},
   review={\MR{1259272}},
}

\bib{FM18}{misc}{
  author = {Filipazzi, Stefano},
  author = {Moraga, Joaqu\'in},
  title={Strong ($\delta$,n)-complements for semi-stable morphisms},
  year = {2018},
  note = {https://arxiv.org/abs/1810.01990},
}

\bib{Ful93}{book}{
   author={Fulton, William},
   title={Introduction to toric varieties},
   series={Annals of Mathematics Studies},
   volume={131},
   note={The William H. Roever Lectures in Geometry},
   publisher={Princeton University Press, Princeton, NJ},
   date={1993},
   pages={xii+157},
   isbn={0-691-00049-2},
   review={\MR{1234037}},
   doi={10.1515/9781400882526},
}

\bib{GZ94}{article}{
   author={Gurjar, R. V.},
   author={Zhang, D.-Q.},
   title={$\pi_1$ of smooth points of a log del Pezzo surface is finite. I},
   journal={J. Math. Sci. Univ. Tokyo},
   volume={1},
   date={1994},
   number={1},
   pages={137--180},
   issn={1340-5705},
   review={\MR{1298542}},
}

\bib{GZ95}{article}{
   author={Gurjar, R. V.},
   author={Zhang, D.-Q.},
   title={$\pi_1$ of smooth points of a log del Pezzo surface is finite. II},
   journal={J. Math. Sci. Univ. Tokyo},
   volume={2},
   date={1995},
   number={1},
   pages={165--196},
   issn={1340-5705},
   review={\MR{1348027}},
}

\bib{HH19}{article}{
   author={Hacon, Christopher D.},
   author={Han, Jingjun},
   title={On a connectedness principle of Shokurov-Koll\'{a}r type},
   journal={Sci. China Math.},
   volume={62},
   date={2019},
   number={3},
   pages={411--416},
   issn={1674-7283},
   review={\MR{3905556}},
   doi={10.1007/s11425-018-9360-5},
}

\bib{HK13}{book}{
   author={Hacon, Christopher D.},
   author={Kov\'{a}cs, S\'{a}ndor J.},
   title={Classification of higher dimensional algebraic varieties},
   series={Oberwolfach Seminars},
   volume={41},
   publisher={Birkh\"{a}user Verlag, Basel},
   date={2010},
   pages={x+208},
   isbn={978-3-0346-0289-1},
   review={\MR{2675555}},
   doi={10.1007/978-3-0346-0290-7},
}

\bib{Hos96}{article}{
   author={Hosoh, Toshio},
   title={Automorphism groups of quartic del Pezzo surfaces},
   journal={J. Algebra},
   volume={185},
   date={1996},
   number={2},
   pages={374--389},
   issn={0021-8693},
   review={\MR{1417377}},
   doi={10.1006/jabr.1996.0331},
}

\bib{KK13}{article}{
   author={Koll\'{a}r, J\'{a}nos},
   author={Kov\'{a}cs, S\'{a}ndor J.},
   title={Log canonical singularities are Du Bois},
   journal={J. Amer. Math. Soc.},
   volume={23},
   date={2010},
   number={3},
   pages={791--813},
   issn={0894-0347},
   review={\MR{2629988}},
   doi={10.1090/S0894-0347-10-00663-6},
}

\bib{KM98}{book}{
   author={Koll\'{a}r, J\'{a}nos},
   author={Mori, Shigefumi},
   title={Birational geometry of algebraic varieties},
   series={Cambridge Tracts in Mathematics},
   volume={134},
   note={With the collaboration of C. H. Clemens and A. Corti;
   Translated from the 1998 Japanese original},
   publisher={Cambridge University Press, Cambridge},
   date={1998},
   pages={viii+254},
   isbn={0-521-63277-3},
   review={\MR{1658959}},
   doi={10.1017/CBO9780511662560},
}

\bib{KM99}{article}{
   author={Keel, Se\'{a}n},
   author={McKernan, James},
   title={Rational curves on quasi-projective surfaces},
   journal={Mem. Amer. Math. Soc.},
   volume={140},
   date={1999},
   number={669},
   pages={viii+153},
   issn={0065-9266},
   review={\MR{1610249}},
   doi={10.1090/memo/0669},
}

\bib{Kol92}{book}{ 
TITLE = {Flips and abundance for algebraic threefolds},
      NOTE = {Papers from the Second Summer Seminar on Algebraic Geometry
              held at the University of Utah, Salt Lake City, Utah, August
              1991,
              Ast\'{e}risque No. 211 (1992) (1992)},
 PUBLISHER = {Soci\'{e}t\'{e} Math\'{e}matique de France, Paris},
      YEAR = {1992},
     PAGES = {1--258},
      ISSN = {0303-1179},
}
	
\bib{Kol13}{book}{
   author={Koll\'{a}r, J\'{a}nos},
   title={Singularities of the minimal model program},
   series={Cambridge Tracts in Mathematics},
   volume={200},
   note={With a collaboration of S\'{a}ndor Kov\'{a}cs},
   publisher={Cambridge University Press, Cambridge},
   date={2013},
   pages={x+370},
   isbn={978-1-107-03534-8},
   review={\MR{3057950}},
   doi={10.1017/CBO9781139547895},
}

\bib{LX16}{misc}{
  author ={Li, Chi},
  author = {Xu, Chenyang},
  title={Stability of Valuations and Koll\'ar Components},
  year = {2016},
  note = {ArXiv e-print, \href{https://arxiv.org/abs/1604.05398}{https://arxiv.org/abs/1604.05398v5}},
}

\bib{MH74}{book}{
   author={Manin, Yu. I.},
   author={Hazewinkel, M.},
   title={Cubic forms: algebra, geometry, arithmetic},
   note={Translated from the Russian by M. Hazewinkel;
   North-Holland Mathematical Library, Vol. 4},
   publisher={North-Holland Publishing Co., Amsterdam-London; American
   Elsevier Publishing Co., New York},
   date={1974},
   pages={vii+292},
   isbn={0-7204-2456-9},
   review={\MR{0460349}},
}

\bib{PS14}{article}{
   author={Prokhorov, Yuri},
   author={Shramov, Constantin},
   title={Jordan property for groups of birational selfmaps},
   journal={Compos. Math.},
   volume={150},
   date={2014},
   number={12},
   pages={2054--2072},
   issn={0010-437X},
   review={\MR{3292293}},
   doi={10.1112/S0010437X14007581},
}

\bib{PS16}{article}{
   author={Prokhorov, Yuri},
   author={Shramov, Constantin},
   title={Jordan property for Cremona groups},
   journal={Amer. J. Math.},
   volume={138},
   date={2016},
   number={2},
   pages={403--418},
   issn={0002-9327},
   review={\MR{3483470}},
   doi={10.1353/ajm.2016.0017},
}

\bib{PS17}{article}{
   author={Prokhorov, Yuri},
   author={Shramov, Constantin},
   title={Jordan constant for Cremona group of rank 3},
   journal={Mosc. Math. J.},
   volume={17},
   date={2017},
   number={3},
   pages={457--509},
   issn={1609-3321},
   review={\MR{3711004}},
   doi={10.17323/1609-4514-2017-17-3-457-509},
}

\bib{PS01}{article}{
   author={Prokhorov, Yuri G.},
   author={Shokurov, Vyacheslav V.},
   title={The first main theorem on complements: from global to local},
   journal={Izvestiya: Mathematics},
   volume={65(6)},
   date={2001},
   pages={1169--1196},
}

\bib{Sho92}{article}{
   author={Shokurov, V. V.},
   title={Three-dimensional log perestroikas},
   language={Russian},
   journal={Izv. Ross. Akad. Nauk Ser. Mat.},
   volume={56},
   date={1992},
   number={1},
   pages={105--203},
   issn={1607-0046},
   translation={
      journal={Russian Acad. Sci. Izv. Math.},
      volume={40},
      date={1993},
      number={1},
      pages={95--202},
      issn={1064-5632},
   },
   review={\MR{1162635}},
   doi={10.1070/IM1993v040n01ABEH001862},
}
	
\bib{Sho00}{article}{
   author={Shokurov, V. V.},
   title={Complements on surfaces},
   note={Algebraic geometry, 10},
   journal={J. Math. Sci. (New York)},
   volume={102},
   date={2000},
   number={2},
   pages={3876--3932},
   issn={1072-3374},
   review={\MR{1794169}},
   doi={10.1007/BF02984106},
}	

\bib{Sti17}{misc}{
  author = {Stibitz, Charlie},
  title={Étale Covers and Local Algebraic Fundamental Groups},
  year = {2017},
  note = {https://arxiv.org/abs/1707.08611},
}

\bib{TX17}{article}{
   author={Tian, Zhiyu},
   author={Xu, Chenyang},
   title={Finiteness of fundamental groups},
   journal={Compos. Math.},
   volume={153},
   date={2017},
   number={2},
   pages={257--273},
   issn={0010-437X},
   review={\MR{3604863}},
   doi={10.1112/S0010437X16007867},
}

\bib{Xu09}{article}{
   author={Xu, Chenyang},
   title={Notes on $\pi_1$ of smooth loci of log del Pezzo surfaces},
   journal={Michigan Math. J.},
   volume={58},
   date={2009},
   number={2},
   pages={489--515},
   issn={0026-2285},
   review={\MR{2595551}},
   doi={10.1307/mmj/1250169074},
}

\bib{Xu14}{article}{
   author={Xu, Chenyang},
   title={Finiteness of algebraic fundamental groups},
   journal={Compos. Math.},
   volume={150},
   date={2014},
   number={3},
   pages={409--414},
   issn={0010-437X},
   review={\MR{3187625}},
   doi={10.1112/S0010437X13007562},
}

\end{biblist}
\end{bibdiv}
\end{document}